\theoremstyle{plain} % default
\newtheorem{theorem}[subsection]{Theorem}
\newtheorem{proposition}[subsection]{Proposition}
\newtheorem{lemma}[subsection]{Lemma}
\newtheorem{corollary}[subsection]{Corollary}
\theoremstyle{definition}
\theoremstyle{remark}
\newtheorem{remark}[subsection]{Remark}
\newtheorem*{remark*}{Remark}
\numberwithin{equation}{subsection}
\DeclareMathOperator{\cone}{{\mathrm{cone}}}
\begin{document}

\title[Local Finiteness of the Twisted Bruhat Orders]{Local Finiteness of the Twisted Bruhat Orders\\ on Affine Weyl Groups}

\author{Weijia Wang}
\address{School of Mathematics (Zhuhai)
\\ Sun Yat-sen University \\
Zhuhai, Guangdong, 519082 \\ China}
\email{wangweij5@mail.sysu.edu.cn}
%\dedicatory{}
\date{\today}
%\thanks{}
%\keywords{}
%\subjclass{}

\begin{abstract}
In this paper we investigate various properties of  strong and weak twisted Bruhat orders on a Coxeter group. In particular we prove that any  twisted strong Bruhat order on an affine Weyl group is locally finite, strengthening a result of Dyer in J. Algebra, 163, 861--879 (1994). We also show that for a non-finite and non-cofinite biclosed set $B$  in the positive system of an affine root system with rank greater than 2, the set of elements having a fixed $B$-twisted length is infinite. This implies that the  twisted strong and weak Bruhat orders have an infinite antichain in those cases. Finally we show that  twisted weak Bruhat order can be applied to the study of the tope poset of an infinite oriented matroid arising from an affine root system.
\end{abstract}

\maketitle

\section{Introduction}

Twisted strong and weak Bruhat orders on a Coxeter group were introduced by Dyer and the author respectively.
These generalizations of the ordinary strong and weak Bruhat orders have found connections with problems related to reflection orders,
representation theory and combinatorics of infinite reduced words.
In \cite{DyerTwistedBruhat} it is shown that Kazhdan-Lusztig type polynomials can be  defined for certain intervals of the  twisted strong
Bruhat orders and these polynomials are used to formulate a conjecture regarding the representation of Kac-Moody Lie algebras.
In \cite{Gobet},  twisted strong Bruhat order is studied for the twisted filtration of the Soergel bimodules.
In \cite{chenyu}, twisted strong Bruhat order is related to the poset of $B\times B$-orbits on the wonderful compactification of algebraic groups.
In \cite{orderpaper}, the semilattice property of twisted weak Bruhat order is shown to characterize the biclosed sets arising from the infinite reduced words in affine cases.

In this paper, we prove several properties regarding the structure of twisted  Bruhat orders which are not previously known. First we study the finite intervals of twisted strong Bruhat orders. Finite intervals are of particular importance as the Kazhdan-Lusztig type polynomials in \cite{DyerTwistedBruhat} can only be defined for finite intervals with other favorable properties. It is not previously known whether for any infinite biclosed set $B$, the $B$-twisted strong Bruhat order on an affine Weyl group is locally finite, i.e. any interval of this poset is finite. In \cite{quotient}, Dyer gives a partial answer to the question, providing a technical condition guaranteeing the local finiteness. In section \ref{localfinite} we solve the problem in the positive, showing that such a poset is always locally finite. Indeed we prove a stronger fact for affine Weyl groups: the set $\{y|y\leq_B x, l_B(x)-l_B(y)=n\}$ is finite for any $x$ and biclosed set $B$. The proof exploits the explicit description of biclosed sets in the affine root system first given in \cite{DyerReflOrder}.

In Section \ref{hyperinterval}, we consider the question whether for some biclosed set $B$, the strong $B$-twisted Bruhat order on a nonaffine Coxeter group is not locally finite. We propose a procedure to obtain an infinite interval for certain $B.$

In Section \ref{fixlength}, we show that while a twisted strong Bruhat order on an affine Weyl group is locally finite, the set of elements with a fixed twisted length is always infinite provided the twisting biclosed set is neither finite nor cofinite and the rank of the affine Weyl group is greater than 2. This result implies that the  twisted strong and weak Bruhat order has an infinite antichain in those cases. The proof makes use of the properties of twisted weak Bruhat order and an explicit description of the biclosed sets in the affine root system.

In Section \ref{examplesec}, we present the structure of a twisted strong Bruhat order on the affine Weyl group $\widetilde{W}$ of type $\widetilde{A}_2$. Such a  description of the structure of a  twisted Bruhat order (which is not isomorphic to ordinary Bruhat order or its opposite) was only known for $\widetilde{A}_1$ previously.

In Section \ref{omsec}, we study the tope poset of the infinite oriented matroid from an affine root system. We first describe all hemispaces (topes) of such an oriented matroid and then show that twisted weak Bruhat orders show up in the tope poset. These results allow us to prove that certain finite intervals in the tope poset are lattices.

\section{Preliminaries}

Let $B$ be a set. Denote by $|B|$ the cardinality of $B$. We denote the disjoint union of sets by $\uplus$.
We refer the reader to \cite{bjornerbrenti} and \cite{Hum} for the basic notions of Coxeter groups and their root systems. We call a Coxeter system without braid relations a universal Coxeter system.

\subsection{Biclosed Sets of Coxeter Groups}

Let $(W,S)$ be a Coxeter system. Denote by $T$ the set of reflections. Given a root $\alpha$, denote by $s_{\alpha}$ the corresponding reflection. Let $t\in T$ be a  reflection. Denote by $\alpha_t$ the corresponding positive root.
For $w\in W$, the inversion set of $w^{-1}$ is defined to be $\{\alpha\in \Phi^+|w^{-1}(\alpha)\in \Phi^-\}$ and is denoted by $N(w)$.
For a general Coxeter system $(W,S)$, $\Phi,\Phi^+,\Phi^-$ denote the set of roots, positive roots and negative roots respectively. We denote by $l(w)$ the (usual) length of $w\in W.$
A set $\Gamma\subset\Phi$ is 2 clousre closed if for any $\alpha,\beta\in \Gamma$ and $k_1\alpha+k_2\beta\in\Phi, k_1,k_2\in \mathbb{R}_{\geq 0}$ one has that  $k_1\alpha+k_2\beta\in\Gamma$. A set $B\subset \Gamma$ such that both $B$ and $\Gamma\backslash B$ are  closed  is called a  biclosed set in $\Gamma$. Finite biclosed sets in $\Phi^+$ are precisely inversion sets $N(x)$ for some $x\in W$ (\cite{DyerWeakOrder} Lemma 4.1(d)). If $s_1s_2\cdots s_k$ is a reduced expression of $x$, then $N(x)=\{\alpha_{s_1},s_1(\alpha_{s_2}),\cdots,s_1s_2\cdots,s_{k-1}(\alpha_{s_k})\}$.
Denote by $\widetilde{N}(x)=\{t\in T|\alpha_t\in N(x)\}$.
We say a positive root $\beta$ dominates another positive root $\alpha$ if $\beta\in N(w)$ for some $w\in W$ implies $\alpha\in N(w).$

Suppose that $W$ is infinite. An infinite sequence $s_1s_2s_3\cdots, s_i\in S$ is called an infinite reduced word of $W$ provided that $s_1s_2\cdots s_j$ is  reduced  for any $j\geq 1$. For an infinite reduced word $x=s_1s_2\cdots$, define the inversion set $N(x)=\cup_{i=1}^{\infty}N(s_1s_2\cdots s_i)$. Two infinite reduced words $x,y$ are considered equal if $N(x)=N(y)$. The inversion set of an infinite reduced word is   biclosed in $\Phi^+$.  A (finite) prefix of an infinite reduced word $w$ is an element $u$ in the Coxeter group $W$ such that $N(u)\subset N(w).$ The set of infinite reduced words of $(W,S)$ is denoted by $W_l$. For an inversion set $N(w)$ we denote by $N(w)'$ the complement of $N(w)$ in $\Phi^+$. An element $w\in W$ is said to be straight if $l(w^n)=|nl(w)|.$ It is shown that in \cite{speyer} that if $W$ is infinite any Coxeter element is straight. Choose a reduced expression $\underline{w}$ of a straight element $w$, then $\underline{w}\underline{w}\underline{w}\cdots$ well defines an infinite reduced word indepenent of the choice of $\underline{w}$ and we denote it by $w^{\infty}$.

There exists a $W$-action on the set of all biclosed sets in $\Phi^+$ given by $w\cdot B:=(N(w)\backslash w(-B))\cup (w(B)\backslash
(-N(w)))$. In particular $u\cdot N(v)=N(uv)$ for $u\in W, v\in W\cup W_l.$

We shall call a set $B$ in $\Phi^+$ cofinite if $\Phi^+\backslash B$ is finite.

\subsection{Twisted Bruhat Order and Twisted Weak Bruhat Order}

Let $B$ be a biclosed set in $\Phi^+$. The left (resp. right) ($B$-)twisted length of an element of $w\in W$ is defined as $l(w)-2|N(w^{-1})\cap B|$ (resp. $l(w)-2|N(w)\cap B|$) and is denoted by $l_B(w)$ (resp. $l'_B(w)$). Clearly $l_B(w)=l_B'(w^{-1})$ and $l_{\emptyset}(w)=l'_{\emptyset}(w)=l(w)$.

Fix a biclosed set $B$ in $\Phi^+$, the left (resp. right) ($B$-)twisted strong  Bruhat order $\leq_B$ on $W$ is defined as follow: $x\leq_B y$ if and only if $y=t_kt_{k-1}\cdots t_1x$ with $l_B(t_i\cdots t_1x)=l_B(t_{i-1}\cdots t_1x)+1, t_i\in T, 1\leq i\leq k$ (resp. $l_B'(xt_1\cdots t_{i})=l_B'(xt_1\cdots t_{i-1})+1, t_i\in T, 1\leq i\leq k$).
One easily sees that the left twisted strong  Bruhat order is isomorphic to the right twisted strong  Bruhat order under the map $w\mapsto w^{-1}$. The left (resp. right) $\emptyset-$twisted strong Bruhat order is the ordinary Bruhat order. For $x\in W$ it is known that the left (resp. right) $N(x)-$twisted strong Bruhat order is isomorphic to the ordinary Bruhat order. Following the convention in the literature, we consider the left twisted strong  Bruhat orders instead of the right ones and often refer to left twisted strong Bruhat order as twisted Bruhat orders. Left $B-$twisted strong Bruhat order can also be characterized equivalently as the unique partial order on $W$ with the following property:
for $t\in T, w\in W$, if $\alpha_t\in w\cdot B$, $tw\leq_B w$ and if $\alpha_t\not\in w\cdot B$ then $tw\geq_B w.$

A closed interval $[x,y]$ in a twisted (strong) Bruhat order is said to be spherical if any  length 2 subinterval of it contains 4 elements. It is known that for a closed spherical interval $[x,y]$, the order complex of the corresponding open interval $(x,y)$ is a sphere.

The left (resp. right) ($B$-)twisted weak Bruhat order $\leq_B'$ on $W$ is defined as follow: $x\leq_B' y$ if and only if $N(x^{-1})\backslash N(y^{-1})\subset B$ and $N(y^{-1})\backslash N(x^{-1})\subset \Phi^+\backslash B$ (resp. $N(x)\backslash N(y)\subset B$ and $N(y)\backslash N(x)\subset \Phi^+\backslash B$). The left $B$-twisted weak Bruhat order is isomorphic to the right $B$-twisted weak Bruhat order under the map $w\mapsto w^{-1}$. For $x\in W$ it is known that the left (resp. right) $N(x)-$twisted weak Bruhat order is isomorphic to the ordinary left (resp. right) weak order.
If $x\leq_B' y$ (under the left (resp. right) $B$-twisted weak Bruhat order), one has that $x\leq_B y$ (under the left (resp. right) $B$-twisted strong Bruhat order).
The left (resp. right) $\emptyset-$twisted weak Bruhat order is the ordinary left (resp. right) weak Bruhat order. Following the convention in the literature, we consider the right twisted weak  Bruhat orders instead of the left ones and often refer to right twisted weak Bruhat orders as twisted weak orders.

The (left) twisted (strong) Bruhat order (resp. (right) twisted weak (Bruhat) order) is graded by the left twisted length function (resp. right twisted length function). Suppose that $u\leq_B' v$ (under the right $B$-twisted weak  order). It is shown in \cite{orderpaper} that one has a chain: $$u=u_0<_B'u_1<_B'u_2<_B'\cdots <_B' u_t=v$$
such that $u_{i}s=u_{i+1}$ for some $s\in S$ and $l_B'(u_i)+1=l_B'(u_{i+1})$. We will refer to this property as the chain property of the twisted weak order and  write $u_i\vartriangleleft u_{i+1}$.

Let $B$ be an inversion set of an element in $W$. Then $(W,\leq_B')$ is isomorphic to $(W,\leq_{\emptyset}')$. It is known that $(W,\leq_{\emptyset}')$ is a complete meet semilattice (Chapter 3 of \cite{bjornerbrenti}). Let $B$ be the inversion set of an infinite reduced word.
It is shown in \cite{orderpaper} that $(W,\leq_B')$ is a non-complete meet semilattice
and for an affine Weyl group $W$, $(W,\leq_B')$ is a non-complete meet semilattice only if $B$ is the inversion set of an infinite reduced word.

\subsection{Construction of Affine Weyl Groups and Their Root Systems}

Let $W$ be an irreducible Weyl group with the crystallographic root system $\Phi$ contained in the Euclidean space $V$. For these notions, see Chapter 1 of \cite{Hum}. The root system of an (irreducible) affine Weyl group $\widetilde{W}$ (corresponding to $W$) can be constructed as the ``loop extension" of $\Phi$.
We describe such a construction. Let $\Phi^+$ be the chosen standard positive system of $\Phi$ and let $\Delta$ be the simple system of $\Phi^+.$

Define a $\mathbb{R}-$vector space $V'=V\oplus\mathbb{R}\delta$ where $\delta$ is an indeterminate. Extend
the inner product on $V$ to $V'$ by requiring $(\delta,v)=0$ for any $v\in V'$.
If
$\alpha\in \Phi^+$, define
$\{\alpha\}^{\wedge}=\{\alpha+n\delta|n\in \mathbb{Z}_{\geq 0}\}\subset
V'$. If $\alpha\in \Phi^-$, define
$\{\alpha\}^{\wedge}=\{\alpha+(n+1)\delta|n\in \mathbb{Z}_{\geq
0}\}\subset V'$.

For a set $\Lambda\subset \Phi$, define
$\Lambda^{\wedge}=\bigcup_{\alpha\in\Lambda}\{\alpha\}^{\wedge}\subset
V'$. The set of roots of the affine Weyl group $\widetilde{W}$, denoted by $\widetilde{\Phi}$, is $\Phi^{\wedge}\uplus-\Phi^{\wedge}$.
The set of positive roots  and the set of negative roots are $\Phi^{\wedge}$ and $-\Phi^{\wedge}$ respectively. The set of simple roots is $\{\alpha|\alpha\in \Delta\}\cup\{\delta-\rho\}$ where $\rho$ is the highest root in $\Phi^+$. Let $\alpha\in \widetilde{\Phi}$ be a root. The reflection in $\alpha$, denoted by $s_{\alpha}$, is an $\mathbb{R}-$linear map $V'\rightarrow V'$ defined by
$v\mapsto v-2\frac{(v,\alpha)}{(\alpha,\alpha)}\alpha.$
The (irreducible) affine Weyl group $\widetilde{W}$ is generated by
$s_{\alpha},\alpha\in \widetilde{\Phi}$. The simple reflections are the reflections in the simple roots of $\Phi^{\wedge}$.  For $v\in V$,  define the $\mathbb{R}-$linear map $t_v$ which acts on $V'$   by $t_v(u)=u+(u,v)\delta.$ For $\alpha\in \Phi,$ define the coroot $\alpha^{\vee}=2\alpha/(\alpha,\alpha)$. Note that $s_{\alpha+n\delta}=s_{\alpha}t_{-n\alpha^{\vee}}$.  Let $T$  be the free Abelian group generated by $\{t_{\gamma^{\vee}}|\gamma\in \Delta\}$. Then it is known that $\widetilde{W}=W\ltimes T.$ Let $\pi$ be the canonical projection from $\widetilde{W}$ to $W$.

If $\alpha\in \Phi^+$, let $(\alpha)_0=\alpha\in \Phi^{\wedge}$. If $\alpha\in \Phi^-$, let $(\alpha)_0=\alpha+\delta\in \Phi^{\wedge}$. We also introduce the following compact notation: $\alpha_a^b:=\{\alpha+k\delta|a\leq k\leq b\}$ for $a,b\in \mathbb{Z}\cup \{\pm\infty\}.$ We also denote $\cup_{i=1}^t(\alpha_i)_{a_i}^{b_i}$ by $(\alpha_1)_{a_1}^{b_1}(\alpha_2)_{a_2}^{b_2}\cdots(\alpha_t)_{a_t}^{b_t}$.

\subsection{Biclosed Sets of an Affine Weyl Group}\label{biclosedaffine}

Let $\Phi$ be a finite irreducible crystallographic root system with $\Phi^+$ the standard positive system and $\Delta$ the simple system. Suppose that $\Delta'\subset \Phi$. Denote by $\Phi_{\Delta'}$ the root subsystem generated by $\Delta'$. It is shown in \cite{biclosedphi} that the biclosed sets in $\Phi$ are those $(\Psi^+\backslash \Phi_{\Delta_1})\cup \Phi_{\Delta_2}$ where $\Psi^+$ is a positive system of $\Phi$ and $\Delta_1,\Delta_2$ are two orthogonal subsets (i.e. $(\alpha,\beta)=0$ for any $\alpha\in \Delta_1,\beta\in \Delta_2$) of the simple system of $\Psi^+.$ For simplicity  we denote the set $(\Psi^+\backslash \Phi_{\Delta_1})\cup \Phi_{\Delta_2}$ by $P(\Psi^+,\Delta_1,\Delta_2)$.

The biclosed sets in $\Phi^{\wedge}$ is determined in \cite{DyerReflOrder}.
Let $W'$ be the reflection subgroup of $\widetilde{W}$ generated by $(\Delta_1\cup\Delta_2)^{\wedge}$. Then any biclosed set in $\widetilde{\Phi}^+(=\Phi^{\wedge})$ is of the form $w\cdot P(\Psi^+,\Delta_1,\Delta_2)^{\wedge}$ for some $\Psi^+,\Delta_1,\Delta_2$ and $w\in \widetilde{W}.$
In particular a biclosed set $w\cdot P(\Psi^+,\Delta_1,\Delta_2)^{\wedge}$ where $w\in W'$ differs from $P(\Psi^+,\Delta_1,\Delta_2)^{\wedge}$ by finite many roots.

Suppose that $B$ is a biclosed set in $\widetilde{\Phi}^+=\Phi^{\wedge}$.
Let $I_B=\{\alpha\in \Phi|\,\{\alpha\}^{\wedge}\cap B$ is infinite$\}$ and
$A_B=\{\alpha\in \Phi|\,|\{\alpha\}^{\wedge}\cap B\neq \emptyset\}$. It is known that $I_B$ is biclosed in $\Phi$. Those biclosed sets $B$ such that $I_B=P(\Psi^+,\Delta_1,\Delta_2)$ are precisely the biclosed sets of the form $w\cdot P(\Psi^+,\Delta_1,\Delta_2)^{\wedge}$ where $w\in W'$. It is shown in \cite{wang} that if $B$ is an inversion set of an element of the affine Weyl group $\widetilde{W}$ or an infinite reduced word in $\widetilde{W}_l$, then $A_B\cap -A_B=\emptyset.$

The following theorem is proved in \cite{wang}

\begin{theorem}\label{infiniteword}
For an affine Weyl group, the biclosed sets which are inversion sets of infinite reduced words are precisely those of the form $w\cdot P(\Psi^+,\Delta_1,\emptyset)^{\wedge}, w\in \widetilde{W},\Delta_1\subsetneq \Delta$.
\end{theorem}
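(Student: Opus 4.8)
The plan is to prove the two inclusions separately. The substantive direction is to show that every biclosed set of the form $w\cdot P(\Psi^{+},\Delta_{1},\emptyset)^{\wedge}$, with $\Delta_{1}$ a proper subset of the simple system of $\Psi^{+}$, is an inversion set of an infinite reduced word; I would produce the word explicitly from a straight translation element. The opposite inclusion I would read off from the structural facts recalled above together with the property $A_{B}\cap(-A_{B})=\emptyset$.

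For the substantive inclusion I would first reduce to $w=e$: since $w\cdot N(v)=N(wv)$ and $v\in\widetilde{W}_{l}$ forces $wv\in\widetilde{W}_{l}$, the class of inversion sets of infinite reduced words is stable under the $\widetilde{W}$-action, so it suffices to realize each $(\Psi^{+}\setminus\Phi_{\Delta_{1}})^{\wedge}$. Pick $\lambda$ in the coroot lattice (so $t_{\lambda}\in T$) in the relative interior of the face of the $\Psi^{+}$-dominant cone determined by $\Delta_{1}$, i.e.\ with $(\beta,\lambda)=0$ for $\beta\in\Delta_{1}$ and $(\beta,\lambda)>0$ for the other simple roots of $\Psi^{+}$; such $\lambda$ exists because $\Delta_{1}$ is proper and $\Delta_{1}^{\perp}$ is a rational subspace, and for it one has $\{\mu\in\Phi:(\mu,\lambda)>0\}=\Psi^{+}\setminus\Phi_{\Delta_{1}}$, the roots orthogonal to $\lambda$ being exactly $\Phi_{\Delta_{1}}$. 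From $t_{\lambda}(\mu+n\delta)=\mu+(n+(\mu,\lambda))\delta$ and $t_{\lambda}^{k}=t_{k\lambda}$ one computes
\[
N(t_{k\lambda})=\{\mu+n\delta\in\widetilde{\Phi}^{+}:\mu\in\Phi^{+},\ n<k(\mu,\lambda)\}\cup\{\mu+n\delta\in\widetilde{\Phi}^{+}:\mu\in\Phi^{-},\ n\le k(\mu,\lambda)\},
\]
so that $\bigcup_{k\ge 1}N(t_{k\lambda})=\bigcup_{(\mu,\lambda)>0}\{\mu\}^{\wedge}=(\Psi^{+}\setminus\Phi_{\Delta_{1}})^{\wedge}$; moreover $|N(t_{k\lambda})|=|k|\,|N(t_{\lambda})|$, so $l(t_{k\lambda})=|k|\,l(t_{\lambda})$ and $t_{\lambda}$ is straight. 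Hence $(t_{\lambda})^{\infty}$ is a genuine infinite reduced word, every $t_{\lambda}^{k}$ is a prefix of it, and $N((t_{\lambda})^{\infty})=\bigcup_{k}N(t_{k\lambda})=(\Psi^{+}\setminus\Phi_{\Delta_{1}})^{\wedge}$, as needed.

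For the opposite inclusion, let $B=N(x)$ with $x$ an infinite reduced word, so $B$ is an infinite biclosed set in $\widetilde{\Phi}^{+}$ with $A_{B}\cap(-A_{B})=\emptyset$. Since $I_{B}$ is biclosed in $\Phi$, it equals $P(\Psi^{+},\Delta_{1},\Delta_{2})$ for some data; as $I_{B}\subseteq A_{B}$ we get $I_{B}\cap(-I_{B})=\emptyset$, while a direct inspection gives $P(\Psi^{+},\Delta_{1},\Delta_{2})\cap(-P(\Psi^{+},\Delta_{1},\Delta_{2}))=\Phi_{\Delta_{2}}$, forcing $\Delta_{2}=\emptyset$ and $I_{B}=P(\Psi^{+},\Delta_{1},\emptyset)$. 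The characterization recalled above of the biclosed sets with a prescribed $I_{B}$ then yields $B=w\cdot P(\Psi^{+},\Delta_{1},\emptyset)^{\wedge}$ with $w$ in the reflection subgroup generated by $\Delta_{1}^{\wedge}$, in particular $w\in\widetilde{W}$; and $\Delta_{1}$ is a proper subset of the simple system of $\Psi^{+}$, for otherwise $\Phi_{\Delta_{1}}=\Phi$ and $I_{B}=\emptyset$, impossible since $B$ is infinite and $\Phi$ is finite. If one insists on writing $\Psi^{+}$ as the standard positive system, one reindexes: the sets $(\Psi^{+}\setminus\Phi_{\Delta_{1}})^{\wedge}$ have the same $I$ as suitable $\widetilde{W}$-translates of $(\Phi^{+}\setminus\Phi_{\Delta_{1}'})^{\wedge}$, hence coincide with such translates by the same uniqueness statement.

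The step I expect to be the main obstacle is the explicit part of the substantive inclusion — choosing $t_{\lambda}$ to lie in $T$, be orthogonal to $\Delta_{1}$, and be strictly dominant on the remaining simple roots of $\Psi^{+}$ \emph{simultaneously}, and then verifying that the inversion sets of its powers exhaust exactly $(\Psi^{+}\setminus\Phi_{\Delta_{1}})^{\wedge}$. The only genuinely delicate point is that dominance is measured with respect to $\Psi^{+}$, which need not be the standard positive system, so the ``very dominant translation'' picture has to be set up over each positive system separately; the rest is routine manipulation of $\{\cdot\}^{\wedge}$, the $\widetilde{W}$-action, and the cited structural results.
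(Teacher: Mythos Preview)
The paper does not contain a proof of this theorem: it is stated as a result imported from \cite{wang} (``The following theorem is proved in \cite{wang}''), so there is no in-paper argument to compare against. Your proposal therefore cannot be matched to anything here; it would have to be checked against the original source.

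That said, your outline is essentially correct and is in fact the standard route to this result. For the forward direction, the reduction to $w=e$ via $u\cdot N(v)=N(uv)$ is exactly the right move, and your construction of the straight translation $t_{\lambda}$ with $\lambda$ in the coroot lattice, orthogonal to $\Delta_{1}$ and strictly positive on the remaining simple roots of $\Psi^{+}$, gives precisely $N(t_{\lambda}^{\infty})=(\Psi^{+}\setminus\Phi_{\Delta_{1}})^{\wedge}$; the length computation $l(t_{k\lambda})=k\,l(t_{\lambda})$ you sketch is correct because $(\mu,\lambda)\in\mathbb{Z}$ for coroot-lattice $\lambda$. For the reverse direction, the chain $I_{B}\subseteq A_{B}$, $A_{B}\cap(-A_{B})=\emptyset$, $P(\Psi^{+},\Delta_{1},\Delta_{2})\cap(-P(\Psi^{+},\Delta_{1},\Delta_{2}))=\Phi_{\Delta_{2}}$ forcing $\Delta_{2}=\emptyset$, followed by the classification of biclosed sets with prescribed $I_{B}$, is exactly how one argues. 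Your handling of the properness of $\Delta_{1}$ (infinite $B$ forces $I_{B}\neq\emptyset$) is also correct. The one place to be careful in writing this up is the existence of $\lambda$: you want a coroot-lattice element, so you should take a suitable positive integer multiple of a sum of fundamental coweights for $\Psi^{+}$, not the coweights themselves.
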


\section{Twisted Bruhat Orders on an Affine Weyl Group Are Locally Finite}\label{localfinite}

Throughout this paper unless otherwise specified, $W$ is a finite irreducible Weyl group and $\widetilde{W}$ is  the corresponding irreducible affine Weyl group. Denote by $\Phi$  the crystallographic root system of $W$ and denote by $\Phi^+$  a chosen standard positive system. Let $\alpha\in \Phi$. A finite $\delta$-chain through $\alpha$ is a set $\{\alpha+d\delta|p\leq d\leq q\}$ where $p\leq q$ are two integers.

\begin{lemma}\label{dominance}
(1) Suppose that $\alpha\in \Phi$ and $k\in \mathbb{Z}_{\geq 0}$. Then in $\widetilde{\Phi}^+$, $\alpha_0+k\delta$ dominates the roots $\alpha_0,\alpha_0+\delta,\cdots,\alpha_0+(k-1)\delta.$

(2) Let $w\in \widetilde{W}$. Then $w$ carries a finite $\delta$-chain through $\alpha\in \Phi$ to another finite $\delta$-chain through some $\beta\in \Phi.$ In particular $\beta=\pi(w)(\alpha)$
\end{lemma}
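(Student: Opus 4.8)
For part (1), the plan is to exhibit, for each $j$ with $0 \le j \le k-1$, a concrete element $w \in \widetilde{W}$ with $\alpha_0 + k\delta \in N(w)$ but $\alpha_0 + j\delta \notin N(w)$; dominance is then established by the standard reformulation that $\beta$ dominates $\gamma$ iff there is no such $w$. Equivalently, and more cleanly, I would use the characterization that $\beta$ dominates $\gamma$ if and only if the $2$-closure of $\{\beta, -\gamma\}$ in $\widetilde\Phi$ contains $-\gamma$... no — the usable criterion here is: $\beta$ dominates $\gamma$ iff whenever $s_\beta$ "inverts" a geometric half-space, so does $s_\gamma$. Concretely, I recall that in an affine root system $\beta = \alpha_0 + k\delta$ dominates $\gamma = \alpha_0 + j\delta$ for $0 \le j < k$ because $\gamma$ and $\delta - $ (something) ... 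Let me instead argue directly: for any $w \in \widetilde W$, $\beta \in N(w)$ means $w^{-1}(\alpha_0 + k\delta) \in \widetilde\Phi^-$. Writing $w^{-1} = t_v u$ with $u \in W$, $v$ in the coweight-type lattice, one computes $w^{-1}(\alpha + m\delta) = u(\alpha) + (m + (\alpha, v'))\delta$ for an appropriate $v'$ depending on $w$; so membership of $\alpha + m\delta$ in $N(w)$ depends only on $u(\alpha)$ and the sign/size of $m + (\alpha,v')$, and is a monotone condition in $m$ (once $u(\alpha)$ is fixed the threshold on $m$ is a single integer). Hence if $\alpha_0 + k\delta \in N(w)$ then $\alpha_0 + j\delta \in N(w)$ for all $0 \le j \le k$, which is exactly dominance. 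The one case needing care is separating the behaviour of $(\alpha)_0$ versus $(-\alpha)_0$, i.e. that the base point of the $\delta$-chain is chosen so that all of $\alpha_0, \alpha_0+\delta, \dots, \alpha_0+k\delta$ lie in $\widetilde\Phi^+$; this is built into the definition of $(\alpha)_0$, so it is routine.

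For part (2), the plan is to use the semidirect product structure $\widetilde W = W \ltimes T$ and the explicit action on $V' = V \oplus \mathbb R\delta$. Write $w = t_v \cdot \pi(w)$ with $\pi(w) \in W$ and $t_v$ a translation (with $t_v(u) = u + (u,v)\delta$). Then for $\alpha \in \Phi$ and $d \in \mathbb Z$,
$$
w(\alpha + d\delta) = t_v\big(\pi(w)(\alpha) + d\delta\big) = \pi(w)(\alpha) + \big(d + (\pi(w)(\alpha), v)\big)\delta.
$$
Since $\pi(w)(\alpha) =: \beta \in \Phi$ and $(\beta, v) \in \mathbb Z$ (because $v$ lies in the relevant coweight lattice and $\beta$ is a root), as $d$ ranges over the consecutive integers $p, p+1, \dots, q$ the values $d + (\beta, v)$ range over the consecutive integers $p + (\beta,v), \dots, q + (\beta,v)$. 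Hence the image is exactly $\{\beta + e\delta \mid p + (\beta,v) \le e \le q + (\beta,v)\}$, a finite $\delta$-chain through $\beta = \pi(w)(\alpha)$, as claimed.

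The only genuine subtlety — and the step I would write most carefully — is the integrality claim $(\pi(w)(\alpha), v) \in \mathbb Z$ that makes the shifted index set again a set of consecutive integers: one must check that with $T$ the free abelian group on $\{t_{\gamma^\vee} \mid \gamma \in \Delta\}$, every $v$ arising as a "translation part" pairs integrally with every root, which follows from $(\alpha, \gamma^\vee) \in \mathbb Z$ for all $\alpha \in \Phi$, $\gamma \in \Delta$ (crystallographic condition) together with additivity of $t_{v_1}t_{v_2} = t_{v_1+v_2}$. Part (1) is then essentially a corollary of the computation in part (2) combined with the monotonicity-in-$m$ remark, so I would be tempted to prove (2) first and deduce (1); but since the excerpt lists (1) first, I would keep the direct $N(w)$-monotonicity argument for (1) and present (2) via the displayed identity above. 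I do not anticipate any serious obstacle; the content is a bookkeeping argument in the loop realization of $\widetilde\Phi$.
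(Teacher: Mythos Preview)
Your proposal is correct, though it takes a somewhat different route from the paper on both parts. For part (1) the paper simply cites an external reference (Lemma~4.1 of \cite{wang}) and gives no argument; you supply an actual proof via the monotonicity of $N(w)$-membership in the $\delta$-coefficient, using the decomposition $w^{-1}=t_v u$. (Your opening sentence --- ``exhibit a concrete $w$ with $\alpha_0+k\delta\in N(w)$ but $\alpha_0+j\delta\notin N(w)$'' --- is backwards, since such a $w$ would \emph{disprove} dominance; but you immediately abandon this and your final monotonicity argument is sound.) For part (2) the paper uses the single-reflection formula $s_{\alpha+p\delta}(\beta+q\delta)=s_\alpha(\beta)+(q-(\beta,\alpha^\vee)p)\delta$ together with $\pi(s_{\alpha+p\delta})=s_\alpha$, implicitly iterating over a reflection word for $w$; you instead invoke the global semidirect-product decomposition $w=t_v\cdot\pi(w)$ and compute the action in one step. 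Your approach for (2) is more self-contained and avoids the implicit induction on the length of $w$; the paper's approach is terser and reuses a formula that is needed again in the next lemma. Both are entirely valid for this bookkeeping result.
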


\begin{proof}
(1) is well known. See for example  Lemma 4.1 in \cite{wang}. (2) follows from the formula
\begin{equation}\label{action}
s_{\alpha+p\delta}(\beta+q\delta)=s_{\alpha}(\beta)+(q-(\beta,\alpha^{\vee})p)\delta
\end{equation}
and the fact $\pi(s_{\alpha+p\delta})=\pi(s_{\alpha}t_{-p\alpha^{\vee}})=s_{\alpha}$.
\end{proof}

\begin{lemma}\label{structure}
(1) $N(s_{\alpha+n\delta})$ has the following structure:

(i) Suppose that $\beta\in \Phi^+, s_{\alpha}(\beta)\in \Phi^+$. Then for $0\leq
k<\frac{2(\alpha,\beta)}{(\alpha,\alpha)}n$, $\beta+k\delta\in
N(s_{\alpha+n\delta}).$

(ii) Suppose that $\beta\in \Phi^+, s_{\alpha}(\beta)\in \Phi^-$. Then for
$0\leq k\leq\frac{2(\alpha,\beta)}{(\alpha,\alpha)}n$,
$\beta+k\delta\in N(s_{\alpha+n\delta}).$

(iii) Suppose that $\beta\in \Phi^-, s_{\alpha}(\beta)\in \Phi^+$. Then for
$0<k<\frac{2(\alpha,\beta)}{(\alpha,\alpha)}n$, $\beta+k\delta\in
N(s_{\alpha+n\delta}).$

(iv) Suppose that $\beta\in \Phi^-, s_{\alpha}(\beta)\in \Phi^-$. Then for
$0<k\leq\frac{2(\alpha,\beta)}{(\alpha,\alpha)}n$, $\beta+k\delta\in
N(s_{\alpha+n\delta}).$

(2) If there exists a  $\delta$-chain through $\beta$ beginning from $\beta_0$ in $N(s_{\alpha+n\delta})$  then there also exists a $\delta$-chain through $-s_{\alpha}(\beta)$ beginning from $(-s_{\alpha}(\beta))_0$ in $N(s_{\alpha+n\delta})$. In addition, these two chains are of equal length.
\end{lemma}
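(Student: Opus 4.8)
The plan is to derive both parts as straightforward consequences of the action formula \eqref{action} together with the explicit description of $\Phi^{\wedge}$. For part (1), note first that $s_{\alpha+n\delta}$ is an involution, so $N(s_{\alpha+n\delta})=\{\gamma\in\Phi^{\wedge}\mid s_{\alpha+n\delta}(\gamma)\in-\Phi^{\wedge}\}$. Put $c=(\beta,\alpha^{\vee})=\frac{2(\alpha,\beta)}{(\alpha,\alpha)}$; then by \eqref{action}, $s_{\alpha+n\delta}(\beta+k\delta)=s_{\alpha}(\beta)+(k-cn)\delta$. I would then read off the two membership conditions from the definition of $\{\,\cdot\,\}^{\wedge}$: the root $\beta+k\delta$ lies in $\Phi^{\wedge}$ exactly when $k\geq 0$ (if $\beta\in\Phi^+$) or $k\geq 1$ (if $\beta\in\Phi^-$); and $s_{\alpha}(\beta)+(k-cn)\delta$ lies in $-\Phi^{\wedge}$, i.e.\ $-s_{\alpha}(\beta)+(cn-k)\delta\in\Phi^{\wedge}$, exactly when $k\leq cn$ (if $s_{\alpha}(\beta)\in\Phi^-$) or $k<cn$ (if $s_{\alpha}(\beta)\in\Phi^+$). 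Conjoining the relevant pair of conditions in each of the four sign combinations produces precisely the ranges (i)--(iv). The argument in fact gives an ``if and only if'', so for fixed $\beta$ the $\delta$-translates of $\beta$ lying in $N(s_{\alpha+n\delta})$ form a single contiguous block of integers which, when nonempty, has $\beta_0$ as its lowest element.

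For part (2), the conceptual device is the map $\phi\colon\gamma\mapsto -s_{\alpha+n\delta}(\gamma)$. First I would check that $\phi$ restricts to an involution of $N(s_{\alpha+n\delta})$: if $\gamma\in N(s_{\alpha+n\delta})$ then $-s_{\alpha+n\delta}(\gamma)\in\Phi^{\wedge}$ while $s_{\alpha+n\delta}(-s_{\alpha+n\delta}(\gamma))=-\gamma\in-\Phi^{\wedge}$, so $\phi(\gamma)\in N(s_{\alpha+n\delta})$, and plainly $\phi^2=\mathrm{id}$. Next, by Lemma \ref{dominance}(2), $s_{\alpha+n\delta}$ carries a finite $\delta$-chain through $\beta$ to a finite $\delta$-chain of the same length through $\pi(s_{\alpha+n\delta})(\beta)=s_{\alpha}(\beta)$; negating, $\phi$ carries a finite $\delta$-chain through $\beta$ to one of the same length through $-s_{\alpha}(\beta)$. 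Now let $C_{\beta}$ be the maximal $\delta$-chain through $\beta$ inside $N(s_{\alpha+n\delta})$ (by part (1) it begins at $\beta_0$) and $C_{\gamma}$ the analogous chain for $\gamma=-s_{\alpha}(\beta)$ (by part (1) it begins at $(-s_{\alpha}(\beta))_0$). Then $\phi(C_{\beta})$ is a $\delta$-chain through $\gamma$ contained in $N(s_{\alpha+n\delta})$, hence $\phi(C_{\beta})\subseteq C_{\gamma}$ and $|C_{\beta}|\leq|C_{\gamma}|$; running the same argument with $\beta$ and $\gamma$ interchanged (note $-s_{\alpha}(\gamma)=\beta$) gives the reverse inequality, so $|C_{\beta}|=|C_{\gamma}|$ and $\phi(C_{\beta})=C_{\gamma}$. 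In particular $C_{\beta}$ is nonempty iff $C_{\gamma}$ is, and when they are nonempty they have equal length and begin at $\beta_0$ and $(-s_{\alpha}(\beta))_0$ respectively, which is the assertion.

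I do not expect a genuine obstacle: the whole lemma is a bookkeeping exercise resting on \eqref{action}, and the only point demanding care is tracking strict versus non-strict inequalities at the endpoints of the four ranges in part (1). If one wished to avoid the involution $\phi$ in part (2) altogether, one could instead compute $|C_{\beta}|$ and $|C_{\gamma}|$ directly from part (1), using $(-s_{\alpha}(\beta),\alpha^{\vee})=(\beta,\alpha^{\vee})$ (which follows from $(\alpha,\alpha^{\vee})=2$) together with the observation that $\beta\mapsto-s_{\alpha}(\beta)$ exchanges cases (i) and (iv) and fixes cases (ii) and (iii); a case-by-case count then shows the two ranges contain the same number of integers.
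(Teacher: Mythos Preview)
Your argument is correct. For part (1) you do exactly what the paper does---derive the four ranges from Equation~\eqref{action}---only spelled out in more detail than the paper's one-line ``follows from Equation~\eqref{action}.''

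For part (2) there is a mild difference of route. The paper goes directly via the identity $(-s_{\alpha}(\beta),\alpha)=(s_{\alpha}(\beta),s_{\alpha}(\alpha))=(\beta,\alpha)$ and then reads off from part (1) that the chain through $-s_{\alpha}(\beta)$ has the same length as the chain through $\beta$; this is precisely the alternative you sketch in your last paragraph. Your primary argument instead packages the computation into the involution $\phi=-s_{\alpha+n\delta}$ on $N(s_{\alpha+n\delta})$ and appeals to Lemma~\ref{dominance}(2) to see that $\phi$ sends $\delta$-chains to $\delta$-chains of the same length. This is a pleasant conceptual gloss---it makes the equal-length statement automatic without a case count---while the paper's version is shorter and avoids the extra lemma citation. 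Either way the content is the same bookkeeping.
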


\begin{proof}
(1) follows from Equation \eqref{action}. To see (2), one notes that $(-s_{\alpha}(\beta),\alpha)=(s_{\alpha}(\beta),-\alpha)=(s_{\alpha}(\beta),s_{\alpha}(\alpha))=(\beta,\alpha)$ and then uses (1).
\end{proof}

\begin{lemma}
When $n$ is sufficiently large,
the inversion set $N(ws_{\alpha+n\delta})$ consists of some finite $\delta$-chains (through roots in $\Phi$) whose lengths are independent of $n$  and the following $\delta$-chains:
$$(\pi(w)(\alpha))_0, (\pi(w)(\alpha))_0+\delta,\cdots, (\pi(w)(\alpha))_0+k_0\delta,$$
$$(\beta_1)_0, (\beta_1)_0+\delta,\cdots, (\beta_1)_0+k_1\delta,$$
$$(\beta_2)_0, (\beta_2)_0+\delta,\cdots, (\beta_2)_0+k_2\delta,$$
$$\cdots$$
$$(\beta_t)_0, (\beta_t)_0+\delta,\cdots, (\beta_t)_0+k_t\delta$$
where $\beta_i\in \Phi, 1\leq i\leq t$ and one can pair each $\beta_i$ with some $\beta_j, j\neq i$ such that $p\beta_i+p\beta_j=\pi(w)(\alpha), p>0$.  Furthermore when $n$ is changed to $n+1$, the increment of the length of the $\delta$-chain through $\beta_i$  and that of the $\delta$-chain through $\beta_j$  coincides.
\end{lemma}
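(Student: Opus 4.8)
The plan is to describe $N(ws_{\alpha+n\delta})$ explicitly as a function of $n$, using the standard description of the inversion set of an element of $\widetilde W$ written as a linear part times a translation. Write $w=\bar w\, t_v$ with $\bar w=\pi(w)\in W$ and $t_v\in T$. From $s_{\alpha+n\delta}=s_\alpha t_{-n\alpha^{\vee}}$ and $t_v s_\alpha=s_\alpha t_{s_\alpha(v)}$ we get $ws_{\alpha+n\delta}=\bar u\, t_{\lambda_n}$, where $\bar u:=\bar w s_\alpha$ is \emph{independent of $n$} and $\lambda_n:=s_\alpha(v)-n\alpha^{\vee}$; note $\pi(ws_{\alpha+n\delta})=\bar u$.

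Next I would record the general shape of the inversion set: a direct computation with the action of $\widetilde W$ on $V'$ (analogous to the proof of Lemma~\ref{structure}) shows that for $u=\bar u\,t_\lambda\in\widetilde W$ and $\beta\in\Phi$ one has $N(u)\cap\{\beta\}^{\wedge}=\{\,\beta+k\delta\in\{\beta\}^{\wedge}\mid k\le M_\beta\,\}$, where $M_\beta=(\bar u^{-1}\beta,\lambda)-1$ if $\bar u^{-1}\beta\in\Phi^{+}$ and $M_\beta=(\bar u^{-1}\beta,\lambda)$ if $\bar u^{-1}\beta\in\Phi^{-}$ (so this set is empty when $M_\beta$ is below the least index occurring in $\{\beta\}^{\wedge}$, and is otherwise a $\delta$-chain starting at $(\beta)_0$). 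Specialising to $u=ws_{\alpha+n\delta}$ and using $s_\alpha(\alpha^{\vee})=-\alpha^{\vee}$ and $\bar w(\alpha^{\vee})=\gamma_0^{\vee}$, where $\gamma_0:=\pi(w)(\alpha)$, one gets $(\bar u^{-1}\beta,\lambda_n)=(\bar w^{-1}\beta,v)+n(\beta,\gamma_0^{\vee})$. Hence $M_\beta(n)$ is affine-linear in $n$ with slope $(\beta,\gamma_0^{\vee})\in\mathbb{Z}$, while the correction $-1$ or $0$ is constant in $n$ (because $\bar u$ is).

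Everything then follows from the sign of $(\beta,\gamma_0)$. If $(\beta,\gamma_0)<0$ then $M_\beta(n)\to-\infty$ and the $\beta$-chain is empty once $n$ is large; if $(\beta,\gamma_0)=0$ then the $\beta$-chain has a fixed length, independent of $n$ --- these are the finite $\delta$-chains in the statement; if $(\beta,\gamma_0)>0$ then the $\beta$-chain grows and (being nonempty) starts at $(\beta)_0$. Among the growing ones is $\beta=\gamma_0=\pi(w)(\alpha)$ itself (slope $(\gamma_0,\gamma_0^{\vee})=2$), which is the distinguished chain. For the remaining $\beta$ with $(\beta,\gamma_0)>0$ I would transport the pairing of Lemma~\ref{structure}(2): the isometry $\pi(w)$ carries $\{\beta'\in\Phi\mid(\beta',\alpha)>0\}$ onto $\{\beta\in\Phi\mid(\beta,\gamma_0)>0\}$ (since $\pi(w)(\alpha)=\gamma_0$) and conjugates the involution $\beta'\mapsto-s_\alpha(\beta')$ (whose only fixed point is $\alpha$) into the involution $\iota(\beta)=(\beta,\gamma_0^{\vee})\gamma_0-\beta$ (whose only fixed point is $\gamma_0$). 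Enumerating the non-fixed points as $\beta_1,\dots,\beta_t$ and pairing $\beta_i$ with $\beta_j:=\iota(\beta_i)$, we obtain $\beta_j\in\Phi$ (it is $\pi(w)$ of a root), $\beta_i+\beta_j=(\beta_i,\gamma_0^{\vee})\gamma_0$ --- i.e.\ $p\beta_i+p\beta_j=\pi(w)(\alpha)$ with $p=1/(\beta_i,\gamma_0^{\vee})>0$ --- and $(\beta_i,\gamma_0^{\vee})=(\beta_j,\gamma_0^{\vee})$, so the chains through $\beta_i$ and $\beta_j$ have equal slope and hence equal increment when $n$ is replaced by $n+1$.

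The argument is then mostly bookkeeping, so I do not expect a serious obstacle; the point that most needs care is to check that the transported involution really is the one of Lemma~\ref{structure}(2), since this is exactly what makes $\beta_j$ automatically a root and fixes the value of $\beta_i+\beta_j$, sparing us a separate root-string verification that $(\beta,\gamma_0^{\vee})\gamma_0-\beta\in\Phi$ (which would need attention in the non-simply-laced types). One should also watch the $-1$ versus $0$ term in $M_\beta$ when deciding which of the ``$(\beta,\gamma_0)=0$'' chains are genuinely nonempty. The structural input that makes the picture stabilise is simply that $\bar u$ --- hence the correction terms and the index set $\{\beta\mid(\beta,\gamma_0)>0\}$ --- does not depend on $n$, which is why $N(ws_{\alpha+n\delta})$ takes exactly the asserted form with nothing left over.
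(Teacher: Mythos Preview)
Your argument is correct and takes a somewhat different route from the paper's. The paper works via the cocycle identity
\[
N(ws_{\alpha+n\delta})=(N(w)\setminus -wN(s_{\alpha+n\delta}))\uplus(wN(s_{\alpha+n\delta})\setminus -N(w)),
\]
so that the chains of fixed length come from $N(w)$ and the growing chains come from $wN(s_{\alpha+n\delta})\cap\widetilde\Phi^{+}$; the pairing is then obtained by transporting Lemma~\ref{structure}(2) by $\pi(w)$, setting $\beta_j=\pi(w)(-s_\alpha(\beta))$ when $\beta_i=\pi(w)(\beta)$. You instead rewrite $ws_{\alpha+n\delta}=\bar u\,t_{\lambda_n}$ with $\bar u=\pi(w)s_\alpha$ fixed and $\lambda_n$ affine in $n$, and read off each chain directly from an explicit formula $M_\beta(n)$ that is affine in $n$ with slope $(\beta,\gamma_0^{\vee})$. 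Both approaches land on the same involution $\beta\mapsto -s_{\gamma_0}(\beta)=(\beta,\gamma_0^{\vee})\gamma_0-\beta$ for the pairing, and both verify equality of increments via $(\beta_j,\gamma_0^{\vee})=(\beta_i,\gamma_0^{\vee})$. Your route has the advantage of yielding exact chain lengths in closed form (which would also streamline the asymptotic in Lemma~\ref{limit}); the paper's route is more modular, reusing the already-established structure of $N(s_{\alpha+n\delta})$ rather than reproving an inversion-set formula for $\bar u\,t_\lambda$. The underlying structural point is the same in both: the linear part $\bar u$ is independent of $n$, so the only $n$-dependence enters through a translation, and this is exactly what forces the trichotomy on $(\beta,\gamma_0)$.
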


\begin{proof}
 By Lemma 4.1(e) of \cite{DyerWeakOrder} $$N(ws_{\alpha+n\delta})=(N(w)\backslash (-wN(s_{\alpha+n\delta})))\uplus (wN(s_{\alpha+n\delta})\backslash (-N(w))).$$

By Lemma \ref{dominance} (1) $N(w)$ consists some finite $\delta$-chains through certain roots in $\Phi$. Therefore we conclude that $(N(w)\backslash (-wN(s_{\alpha+n\delta})))$ consists of some finite $\delta$-chains (through roots in $\Phi$) whose lengths are independent of $n$ (when $n$ is sufficiently large).

By Lemma \ref{structure} (1) and Lemma \ref{dominance} (2), one sees that $wN(s_{\alpha+n\delta})\cap \widetilde{\Phi}^+=wN(s_{\alpha+n\delta})\backslash (-N(w))$ consists of $\delta$-chains through certain roots in $\Phi$ and that when $n$ grows the upper bound of the coefficient of $\delta$ in each of  these chains grows accordingly.

Again Lemma \ref{dominance} (1) forces that any  $\delta$-chain through a root $\beta$ must start from $\beta_0$ in $N(ws_{\alpha+n\delta})$.
Therefore we have shown that $N(ws_{\alpha+n\delta})$ consists of roots in the form as listed in the lemma.
 We still need to   show the existence of the pairing described in the Lemma.

Now suppose $\beta_i=\pi(w)(\beta)$. Then the root $\beta_j=\pi(w)(-s_{\alpha}(\beta))$ satisfies the condition in this lemma thanks to Lemma \ref{structure} (2). Note that the positivity of $p$ follows from the fact $(\beta,\alpha)>0$ (note that $(\beta,\alpha)>0$ because otherwise the $\delta$-chain through $\beta$ will not appear in the inversion set of $s_{\alpha+n\delta}$ by Lemma \ref{structure} (1)).
\end{proof}

\begin{lemma}\label{limit}
$$\lim_{n\rightarrow \infty} |l_B(s_{\alpha+n\delta}w)|=+\infty  $$
for any $B$ biclosed in $\widetilde{\Phi}^+$, $w\in \widetilde{W}, \alpha\in \Phi.$
\end{lemma}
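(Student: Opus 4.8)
The plan is to reduce the claim about $l_B(s_{\alpha+n\delta}w)$ to control of the ordinary length $l(s_{\alpha+n\delta}w)$ together with a uniform bound on the size of the "twisting correction" $|N((s_{\alpha+n\delta}w)^{-1})\cap B|$ relative to $l(s_{\alpha+n\delta}w)$. Recall $l_B(y)=l(y)-2|N(y^{-1})\cap B|$, so $|l_B(y)|\geq l(y)-2|N(y^{-1})\cap B|$ is not immediately useful; instead I want to show that $l(s_{\alpha+n\delta}w)\to\infty$ and that $|N((s_{\alpha+n\delta}w)^{-1})\cap B|$ is bounded away from $\tfrac12 l(s_{\alpha+n\delta}w)$ by a quantity growing with $n$. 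Equivalently, writing $y_n = s_{\alpha+n\delta}w$, I will show $l(y_n)-2|N(y_n^{-1})\cap B|$ and $2|N(y_n^{-1})\cap B|-l(y_n)$ cannot both stay bounded; one of them tends to $+\infty$.

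First I would apply the previous (unlabeled) lemma to $y_n^{-1} = w^{-1}s_{\alpha+n\delta}$ — note $s_{\alpha+n\delta}$ is an involution, so $y_n^{-1}=w^{-1}s_{\alpha+n\delta} = (w^{-1})s_{\alpha+n\delta}$ has exactly the form covered there with $w$ replaced by $w^{-1}$. Thus for $n$ large, $N(y_n^{-1})=N(w^{-1}s_{\alpha+n\delta})$ is a disjoint union of finitely many finite $\delta$-chains: a bounded-length "stable part" (lengths independent of $n$) plus the growing chains through $\pi(w^{-1})(\alpha)$ and through paired roots $\beta_i,\beta_j$ with $p\beta_i+p\beta_j=\pi(w^{-1})(\alpha)$, $p>0$, the increments of the paired chains agreeing when $n\to n+1$. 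Write $m = \pi(w^{-1})(\alpha)$ for brevity. So $l(y_n) = C + (\text{length of }m\text{-chain}) + \sum_i(\text{length of }\beta_i\text{-chain})$, where $C$ is bounded and each growing chain-length is of the form $cn + O(1)$ with $c>0$; in particular $l(y_n)=\Theta(n)\to\infty$.

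Now the key point: $B$ is biclosed in $\widetilde\Phi^+$, and I want to estimate $|N(y_n^{-1})\cap B|$. For each root $\gamma\in\Phi$ appearing, $B$ meets the $\delta$-line through $\gamma$ in a set whose intersection with a $\delta$-chain $\gamma_0,\dots,\gamma_0+k\delta$ I must understand. The crucial structural input is that $B$ restricted to a single $\delta$-line $\{\gamma+d\delta\}$ is itself "closed and co-closed" there (a biclosed set in a rank-one-like situation along that line is an up-set or down-set in the linear order on $d$, up to the finitely many discrepancies described in Section \ref{biclosedaffine}: $B$ differs from some $w\cdot P(\Psi^+,\Delta_1,\Delta_2)^{\wedge}$ by finitely many roots). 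Hence for $n$ large, $B$ either contains all but boundedly many of the growing $\delta$-chain through $\gamma$, or is disjoint from all but boundedly many of it. Apply this to the $m$-chain and to each paired $\beta_i$-chain. Using the pairing $p\beta_i+p\beta_j = m$ and 2-closedness of $B$: if $B$ contains cofinitely much of the $\beta_i$-line and cofinitely much of the $\beta_j$-line, then (since $\beta_i+\beta_j$ is proportional to $m$ with positive coefficients) $B$ contains cofinitely much of the $m$-line; conversely, the complement $\widetilde\Phi^+\setminus B$ is also 2-closed, giving the contrapositive constraints. This forces a rigid combinatorial dichotomy: on each paired block $\{\beta_i,\beta_j\}$ together with the $m$-chain, either "most" roots lie in $B$ or "most" lie outside $B$, and these choices propagate consistently.

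From this dichotomy I get: $2|N(y_n^{-1})\cap B| - l(y_n) = \sum_{\text{chains }\gamma}\bigl(2|B\cap(\gamma\text{-chain})| - |\gamma\text{-chain}|\bigr) = \sum_\gamma \pm(\text{length of }\gamma\text{-chain}) + O(1)$, where the sign is $+$ if $B$ swallows the chain and $-$ if it avoids it. Since at least one growing chain is present and contributes $\pm\Theta(n)$, and since the contributions of the paired blocks $\{\beta_i,\beta_j\}$ share a common sign dictated by whether the $m$-chain is swallowed (by the 2-closedness argument above, the paired chains and the $m$-chain are on the "same side"), all the $\Theta(n)$ terms have the same sign. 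Therefore $2|N(y_n^{-1})\cap B| - l(y_n) = \pm\Theta(n) + O(1)$ with a fixed sign, so $|l_B(y_n)| = |l(y_n) - 2|N(y_n^{-1})\cap B|| = \Theta(n)\to +\infty$. The main obstacle I anticipate is making precise the claim that on a single $\delta$-line a biclosed subset of $\widetilde\Phi^+$ is, up to finitely many exceptions, an interval/ray — and then showing the signs on paired blocks genuinely agree with the sign on the $m$-chain; this is exactly where 2-closedness of both $B$ and its complement, combined with the positive linear relation $p\beta_i+p\beta_j=m$, must be invoked carefully, possibly with a few small-rank or edge cases ($m$ not a root, or $\beta_i=\beta_j$) handled separately.
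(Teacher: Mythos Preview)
Your overall strategy coincides with the paper's: analyze $N(w^{-1}s_{\alpha+n\delta})$ via the structural lemma, then control how $B$ meets each growing $\delta$-chain. Your ``swallowed/avoided'' dichotomy is exactly membership in the set $I_B=\{\gamma\in\Phi:\ |\{\gamma\}^\wedge\cap B|=\infty\}$, which the paper uses explicitly; the fact you flag as the main obstacle (that $B$ restricted to a single $\delta$-line is eventually all-in or all-out) is Lemma~5.10 of \cite{DyerReflOrder} together with the definition of $I_B$, so that part is fine.

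There is, however, a real gap in your sign argument. From the two implications you extract ($\beta_i,\beta_j\in I_B\Rightarrow m\in I_B$ and $\beta_i,\beta_j\notin I_B\Rightarrow m\notin I_B$) you cannot conclude that ``all the $\Theta(n)$ terms have the same sign.'' Biclosedness of $I_B$ in $\Phi$ does not exclude the mixed case $\beta_i\in I_B$, $\beta_j\notin I_B$, and this case genuinely occurs; there the $\beta_i$-chain and $\beta_j$-chain contribute with \emph{opposite} signs, so your conclusion as stated is false.

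The repair is exactly the equal-increment property you recorded but never used: since the $\beta_i$- and $\beta_j$-chains have the same increment when $n\mapsto n+1$, their lengths differ by $O(1)$, and in the mixed case their contributions to $2|N(y_n^{-1})\cap B|-l(y_n)$ cancel up to $O(1)$. What survives is the $m$-chain plus the unmixed pairs. If $m\in I_B$, coclosedness of $I_B$ rules out any pair with both $\beta_i,\beta_j\notin I_B$, so every remaining $\Theta(n)$ contribution is positive; dually if $m\notin I_B$. This is precisely the paper's argument: it splits on whether $\pi(w)(\alpha)\in I_B$, notes that mixed pairs contribute zero net increment, and concludes the twisted length is eventually monotone in $n$.
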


\begin{proof}
We examine the set $N(ws_{\alpha+n\delta})$ for sufficiently large $n$.
By Lemma \ref{structure} besides some finite $\delta$-chain  which will stay stationary when $n$ grows, this set consists some finite $\delta$-chain through roots in $\Phi$ which will grow when $n$ grows, i.e.
$$(\pi(w)(\alpha))_0, (\pi(w)(\alpha))_0+\delta,\cdots, (\pi(w)(\alpha))_0+k_0\delta,$$
$$(\beta_1)_0, (\beta_1)_0+\delta,\cdots, (\beta_1)_0+k_1\delta,$$
$$(\beta_2)_0, (\beta_2)_0+\delta,\cdots, (\beta_2)_0+k_2\delta,$$
$$\cdots$$
$$(\beta_t)_0, (\beta_t)_0+\delta,\cdots, (\beta_t)_0+k_t\delta$$

Also one can pair those $\beta_i$'s, for each $\beta_i$ one can find $\beta_j, j\neq i$ such that $p\beta_i+p\beta_j=\pi(w)(\alpha), p>0$.
Furthermore when $n$ is changed to $n+1$ the increment of the length of the $\delta$-chain through $\beta_i$  and that of the $\delta$-chain through $\beta_j$  coincides.

Now we consider the set $I_B=\{\alpha\in \Phi|\{\alpha\}^{\wedge}\cap B$ is infinite$\}$. Then $I_B$ is a biclosed set in $\Phi.$
Also by Lemma 5.10 in \cite{DyerReflOrder} for any $\alpha\in I_B$, there exists $N_{\alpha}\in \mathbb{Z}_{>0}$ such that $\alpha+n\delta\in B$ for all $n>N_{\alpha}$.
Suppose that $\pi(w)(\alpha)\in I_B$. Then for a pair of $\beta_i,\beta_j$ either one of them is in $I_B$ and one of them is in $\Phi\backslash I_B$ or both of them are in $I_B$ since otherwise $p\beta_i+p\beta_j=\pi(w)(\alpha)$ has to be in $\Phi\backslash I_B$ by coclosedness of $I_B.$
So when $n$ is changed to $n+1$ more added roots in the inversion set $N(ws_{\alpha+n\delta})$ are in $B$ than in $\widehat{\Phi}\backslash B$, making the twisted length decreasing.

Similarly if $\pi(w)(\alpha)\not\in I_B$ one can deduce that when $n$ is changed to $n+1$ more added roots in the inversion set $N(ws_{\alpha+n\delta})$ are in $\widehat{\Phi}\backslash B$ than in $B$, making the length increasing. Therefore we see that the twisted length tends to infinity as $n\rightarrow \infty.$
\end{proof}

\begin{proposition}\label{finiteunder}
For any $x\in \widetilde{W}$, $n\in \mathbb{N}$ and a biclosed set $B$ in $\widetilde{\Phi}^+$, the set
$$\{y\in \widetilde{W}|y\leq_B x, l_B(x)-l_B(y)=n\}$$
is finite.
\end{proposition}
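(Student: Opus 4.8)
The plan is to prove the contrapositive/finiteness statement by contradiction, using Lemma \ref{limit} as the engine that rules out any escape to infinity. Suppose the set $\{y\in\widetilde W\mid y\leq_B x,\ l_B(x)-l_B(y)=n\}$ is infinite. Since $l_B(y)=l(y)-2|N(y^{-1})\cap B|$ takes a fixed value $l_B(x)-n$ on this set, and $N(y^{-1})$ ranges over finite biclosed sets, I first want to argue that the ordinary lengths $l(y)$ of elements in this set must themselves be unbounded: if they were bounded, there would be only finitely many $y$, a contradiction. (One has $|N(y^{-1})\cap B|\le l(y)$, so a bound on $l(y)$ bounds everything.) Hence there is a sequence $y_1,y_2,\dots$ in the set with $l(y_i)\to\infty$ while $l_B(y_i)$ stays constant.

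Next I would use the structure of $\widetilde W=W\ltimes T$ together with the fact that elements of large ordinary length in an affine Weyl group must involve "large translation parts." Concretely, every $y\in\widetilde W$ can be written in a normal form that exhibits a reflection $s_{\alpha+m\delta}$ with $|m|$ large as a right (or left) factor once $l(y)$ is large; more precisely, among the elements $\{y_i\}$ with $l(y_i)\to\infty$, by passing to a subsequence one can assume $y_i=s_{\alpha+n_i\delta}w$ (or a bounded perturbation thereof) for a fixed $\alpha\in\Phi$, a fixed $w\in\widetilde W$, and $n_i\to\infty$ — this is exactly the shape of element to which Lemma \ref{limit} applies. The mechanism here is that the root system $\widetilde\Phi$ has roots only in the finitely many "columns" $\{\alpha\}^\wedge$, $\alpha\in\Phi$, so unbounded length forces unbounded $\delta$-coefficients along some column, which is what a factor $s_{\alpha+n\delta}$ produces.

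Then Lemma \ref{limit} gives $|l_B(y_i)|=|l_B(s_{\alpha+n_i\delta}w)|\to+\infty$, directly contradicting the constancy of $l_B(y_i)=l_B(x)-n$. This is the crux of the argument, and the main obstacle is the bookkeeping in the previous paragraph: one must make precise the reduction "infinitely many elements of bounded twisted length but unbounded ordinary length can be normalized, after passing to a subsequence, to the form $s_{\alpha+n\delta}w$ with $n\to\infty$." I expect to handle this by invoking the decomposition $\widetilde W=W\ltimes T$: write $y_i=w_i t_{\lambda_i}$ with $w_i\in W$ (only finitely many choices) and $\lambda_i$ in the coweight/coroot lattice; $l(y_i)\to\infty$ forces $\|\lambda_i\|\to\infty$, and then one extracts a subsequence along which $\lambda_i$ grows in a fixed direction so that $y_i$ factors as claimed through a reflection $s_{\alpha+n_i\delta}$ with $n_i\to\infty$. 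A mild subtlety is that the constraint $y\le_B x$ must be used to keep the elements $y$ "below" $x$; but for the finiteness conclusion we only need that $y$ ranges in $\widetilde W$, so the order relation $y\le_B x$ is needed merely to pin down the fixed value $l_B(x)-n$ and can otherwise be discarded once the twisted-length bound is in force.

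Finally, I would note that combining this proposition with Lemma \ref{limit} and the grading of $\leq_B$ by $l_B$ immediately yields local finiteness: for $y\le_B x$ the twisted-length drop $l_B(x)-l_B(y)$ ranges over $\{0,1,\dots\}$ and the interval $[y',x]_B$ decomposes into finitely many rank levels, each finite by the proposition, so $[y',x]_B$ is finite. I will record this as the corollary that the $B$-twisted strong Bruhat order on $\widetilde W$ is locally finite, strengthening the result of Dyer.
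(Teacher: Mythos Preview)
Your argument has a genuine gap, and in fact the statement you are effectively trying to prove is false. You explicitly discard the order constraint, writing that ``the order relation $y\le_B x$ is needed merely to pin down the fixed value $l_B(x)-n$ and can otherwise be discarded.'' But the paper's own Theorem~\ref{setisinfinite} shows that (for $B$ neither finite nor cofinite and rank $\geq 3$) the set $\{y\in\widetilde W\mid l_B(y)=c\}$ is infinite for every integer $c$; the relation $y\leq_B x$ is precisely what makes the proposition true. So your contradiction argument cannot go through: a sequence $y_i$ with $l(y_i)\to\infty$ and $l_B(y_i)$ constant certainly exists, and it does \emph{not} contradict Lemma~\ref{limit}. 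The failure is in your factorization step: you cannot in general arrange $y_i=s_{\alpha+n_i\delta}w$ with a \emph{fixed} $w\in\widetilde W$ and $n_i\to\infty$, because for fixed $\alpha$ and $w$ the elements $s_{\alpha+n\delta}w=s_\alpha t_{-n\alpha^\vee}w$ sweep out a single one-dimensional coset in $\widetilde W=W\ltimes T$, whereas the translation lattice $T$ has rank $|\Delta|>1$. Appealing to ``a bounded perturbation thereof'' does not help, since Lemma~\ref{limit} provides no uniformity in $w$.

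The paper uses the relation $y\leq_B x$ in an essential way, via induction on $n$ and the grading of $\leq_B$ by $l_B$. For $n=1$, any $y<_B x$ with $l_B(x)-l_B(y)=1$ is of the form $y=s_{\alpha+k\delta}x$ for some $\alpha\in\Phi$ and $k\in\mathbb Z$; for each of the finitely many $\alpha\in\Phi$, Lemma~\ref{limit} (applied with $w=x$ genuinely fixed) leaves only finitely many admissible $k$. For general $n$, the grading implies that every $y$ at level $n$ lies below some $z$ at level $1$, so the level-$n$ set is a finite union (over the finitely many such $z$) of level-$(n-1)$ sets, and induction finishes. The point is that Lemma~\ref{limit} is always invoked with its second argument held fixed, which is exactly what your approach fails to arrange.
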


\begin{proof}
%This is an immediate consequence of Lemma \ref{limit} and the fact that $\Phi$ is finite.
We prove this by induction. Let $n=1$ and $\alpha\in \Phi$. The set $\{k|l_B(s_{\alpha+k\delta}x)=l_B(x)-1\}$ is finite by Lemma \ref{limit}. Note that $\Phi$ is finite and therefore the proposition holds when $n=1.$ Now let $n=k$. Then
$$\{y\in \widetilde{W}|y\leq_B x, l_B(x)-l_B(y)=k\}=$$$$\bigcup_{\alpha,k:l_B(s_{\alpha+k\delta}x)=l_B(x)-1}\{y\in \widetilde{W}|y\leq_B s_{\alpha+k\delta}x, l_B(s_{\alpha+k\delta}x)-l_B(y)=k-1\}$$
since twisted Bruhat order is graded by twisted length function.
Now the proposition follows from induction.
\end{proof}

\begin{theorem}
For any biclosed set $B$ in $\widetilde{\Phi}^+$ and any $x,y\in \widetilde{W}$ such that $x<_B y$, the interval $[x,y]$ in the poset $(W,\leq_B)$ is finite.
\end{theorem}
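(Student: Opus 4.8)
The plan is to obtain this theorem as an essentially immediate consequence of Proposition \ref{finiteunder} together with the fact, recorded in the preliminaries, that the twisted strong Bruhat order $(\widetilde{W},\leq_B)$ is graded by the twisted length function $l_B$. The point is that an interval $[x,y]$ sits inside a finite union of the ``fixed twisted-length-drop'' down-sets from the top element $y$, each of which has already been shown to be finite.

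Concretely, I would argue as follows. Since $x <_B y$ and the order is graded by $l_B$, the difference $n := l_B(y) - l_B(x)$ is a positive integer. Let $z \in [x,y]$, i.e. $x \leq_B z \leq_B y$. Monotonicity of $l_B$ along $\leq_B$ (again from the grading) gives $l_B(x) \leq l_B(z) \leq l_B(y)$, so $m := l_B(y) - l_B(z)$ is an integer with $0 \leq m \leq n$. Hence
$$[x,y] \ \subseteq\ \bigcup_{m=0}^{n} \bigl\{\, z \in \widetilde{W} \ \big|\ z \leq_B y,\ l_B(y) - l_B(z) = m \,\bigr\}.$$
By Proposition \ref{finiteunder} (applied with $y$ in place of $x$), each set on the right-hand side is finite, and the union runs over the finitely many values $m = 0,1,\dots,n$. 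Therefore $[x,y]$ is finite.

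I do not expect a genuine obstacle here: the substantive work has already been carried out in Lemma \ref{limit} — which controls the behaviour of $l_B$ along a $\delta$-string $s_{\alpha+n\delta}w$ — and in the inductive argument of Proposition \ref{finiteunder}. The only points worth being careful about are that the interval $[x,y]$ is taken in $(\widetilde{W},\leq_B)$ (so that Proposition \ref{finiteunder} is directly applicable to its elements), and that the grading of $\leq_B$ by $l_B$, stated in the preliminaries, legitimately yields both $n \in \mathbb{Z}_{>0}$ and the inequalities $l_B(x) \leq l_B(z) \leq l_B(y)$ for $z \in [x,y]$; these are routine. In particular, this also reproves local finiteness in the form originally asked (every interval finite), and strengthens it to the uniform statement about fixed twisted-length drops.
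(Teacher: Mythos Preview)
Your proposal is correct and follows essentially the same approach as the paper: both derive the theorem directly from Proposition \ref{finiteunder} together with the fact that $\leq_B$ is graded by $l_B$. The paper phrases the argument as an induction on $l_B(y)-l_B(x)$, peeling off one level at a time, while you take the equivalent (and slightly cleaner) route of writing $[x,y]$ as a subset of the finite union $\bigcup_{m=0}^{n}\{z\leq_B y: l_B(y)-l_B(z)=m\}$.
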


\begin{proof}
If $l_B(y)-l_B(x)=1$ then the assertion is trivial (as $[x,y]=\{x,y\}$). Now suppose that the assertion holds for all pairs of $x,y$ with the properties $x<_B y$ and $l_B(y)-l_B(x)\leq k-1$. Take $x,y\in \widetilde{W}, x<_B y$ and $l_B(y)-l_B(x)=k$.
By Proposition \ref{finiteunder}, the set $\{z|x<_B z<_B y, l_B(y)-l_B(z)=1\}$ is finite. For any $z$ such that $x<_B z<_B y, l_B(y)-l_B(z)=1$ the interval $[x,z]$ is finite by induction assumption. Then $[x,y]$ is also finite since the twisted Bruhat order is graded by the left twisted length function.
\end{proof}

\section{Construction of infinite interval in a non-affine Coxeter system}\label{hyperinterval}

It is natural to ask that given a non-affine infinite Coxeter group $W$ whether infinite intervals always show up in $(W,\leq_B)$ for some biclosed set $B\subset \Phi^+.$ In \cite{DyerTwistedBruhat} subsection 1.10, it is shown that this is true for a universal Coxeter group of rank $>2$ as follow.

Let $(W,S)$ be a universal Coxeter system of rank $3$ with $S=\{s_1,s_2,s_3\}$. Let $B=N((s_1s_2s_3)^{\infty})$. Then the closed interval $[1,s_1s_2s_3]$ is infinite in $(W,\leq_B)$.

By \cite{UniversalRefl}, every irreducible infinite non-affine Coxeter group $W$ has a reflection subgroup $W'$ which is universal of rank $3$. Therefore by above for certain biclosed set $B\subset \Phi_{W'}^+$ (the set of positive roots of $W'$) the left $B-$twisted  strong Bruhat order on such a reflection subgroup (denoted $\leq_{W',B}$)  has an infinite interval (with bottom element $e$) which we denote by $[e,w]$.
Suppose that there exists a biclosed set $A\subset \Phi^+$ such that $A\cap \Phi_{W'}^+=B$. Then by Proposition 1.4 in \cite{DyerTwistedBruhat} (or see subsection 1.12 of \cite{quotient}) we have a poset injection $([e,w],\leq_{W',B})\rightarrow ([e,w],\leq_A)$.
 Therefore the closed interval $[e,w]$ is also infinite in $(W,\leq_A)$.

Now we illustrate this process through a concrete example.

Let $(W,S)$ be of $(2,3,\infty)$ type, i.e. $W=\{s_1, s_2, s_3|s_1^2=s_2^2=s_3^2=(s_1s_2)^3=(s_1s_3)^2=e\}$ and $S=\{s_1, s_2, s_3\}.$ Denote $\alpha_{s_i}$ by $\alpha_i$. For a reflection subgroup $W'$ of $W$, denote $T\cap W'$ by $T_{W'}$. The root system of $W'$ is $\Phi_{W'}=\{\alpha\in \Phi|s_{\alpha}\in W'\}.$

Consider the reflection subgroup $W'$ generated by $S':=\{s_1, s_2s_3s_2, s_3s_2s_3s_2s_3\}$. Write $r_1=s_1, r_2=s_2s_3s_2, r_3=s_3s_2s_3s_2s_3.$ Note that $(\alpha_{r_i},\alpha_{r_j})=-1$ for $i,j\in \{1,2,3\},i\neq j.$
Also we note that $$\widetilde{N}(s_1)\backslash \{s_1\}=\emptyset,$$ $$\widetilde{N}(s_2s_3s_2)\backslash \{s_2s_3s_2\}=\{s_2,s_2s_3s_2s_3s_2\},$$ $$\widetilde{N}(s_3s_2s_3s_2s_3)\backslash\{s_3s_2s_3s_2s_3\}=\{s_3,s_3s_2s_3, s_3s_2s_3s_2s_3s_2s_3, s_3s_2s_3s_2s_3s_2s_3s_2s_3\}.$$
One easily checks that  $r_i$ and $r_j, i\neq j, i,j\in \{1,2,3\}$ generate an infinite dihedral subgroup with $r_i$ and $r_j$ being the canonical simple generators.
Then apply Proposition 3.5 of \cite{DyerReflSubgrp} we see that $\widetilde{N}(s_1)\cap T_{W'}=\{s_1\}, \widetilde{N}(s_2s_3s_2)\cap T_{W'}=\{s_2s_3s_2\},$ $\widetilde{N}(s_3s_2s_3s_2s_3)\cap T_{W'}=\{s_3s_2s_3s_2s_3\}.$
Therefore $(W',S')$ is indeed a rank 3 universal Coxeter system.

Since the only braid moves that can be applied to $(s_1(s_2s_3s_2)(s_3s_2s_3s_2s_3))^k$ are short braid moves between $s_1$ and $s_3$, the element $s_1(s_2s_3s_2)(s_3s_2s_3s_2s_3)$ is straight. One can also check that $\widetilde{N}(s_1(s_2s_3s_2)(s_3s_2s_3s_2s_3))\cap T_{W'}=\{r_1, r_1r_2r_1, r_1r_2r_3r_2r_1\}$. Consequently $\widetilde{N}((s_1(s_2s_3s_2)(s_3s_2s_3s_2s_3))^{\infty})\cap T_{W'}=\newline$
$\{r_1,r_1r_2r_1,r_1r_2r_3r_2r_1, r_1r_2r_3r_1r_3r_2r_1,r_1r_2r_3r_1r_2r_1r_3r_2r_1\cdots\}$. Based on the above discussion, one concludes that the interval $[1, s_1(s_2s_3s_2)(s_3s_2s_3s_2s_3)]$ in $(W,\leq_A)$ is infinite where $A=N((s_1(s_2s_3s_2)(s_3s_2s_3s_2s_3))^{\infty})$.

\section{Set of elements of a given twisted length}\label{fixlength}

In this section we consider the set $\{w\in \widetilde{W}|l_B'(w)=n\}$. In contrast to the situation for the ordinary Bruhat order, we will show that such a set is infinite in most cases. Since $l_B(w^{-1})=l_B'(w)$, our result also shows that the set $\{w\in \widetilde{W}|\,l_B(w)=n\}$ is infinite.

It is convenient to investigate this question in the context of (right) $B$-twisted weak Bruhat order $<_B'$. In the following two lemmas $(W,S)$ is a general finite rank infinite Coxeter system and $\Phi,\Phi^+$ are its root system and its set of positive roots.

\begin{lemma}\label{nomaxminone}
Let $w\in W_l$ (the set of  infinite reduced words), then $(W,\leq_{N(w)}')$ (resp. $(W,\leq_{\Phi^+\backslash N(w)}')$) has no maximal element or minimal element.
\end{lemma}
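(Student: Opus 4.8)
The plan is to exhibit, for any $w \in W_l$ and any $u \in W$, both an element strictly greater than $u$ and an element strictly smaller than $u$ in $(W, \leq'_{N(w)})$; this immediately rules out maximal and minimal elements, and the case $(W, \leq'_{\Phi^+\setminus N(w)})$ follows by the symmetry $B \leftrightarrow \Phi^+\setminus B$ in the definition of twisted weak order (swapping the roles of the two containment conditions). Recall that the key structural input is the chain property of twisted weak order (stated in the Preliminaries): if $u \vartriangleleft v$ then $v = us$ for some $s \in S$ with $l'_B(u)+1 = l'_B(v)$, and covers in either direction correspond to right multiplication by a simple reflection that moves a single root across the ``boundary'' determined by $B$. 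Concretely, for $s \in S$ one has $us \vartriangleright u$ (a cover up) precisely when the unique root in $N(us)\setminus N(u)$ or $N(u)\setminus N(us)$ lies on the correct side of $B$, and $us \vartriangleleft u$ otherwise; so finding a neighbour in a prescribed direction amounts to finding $s \in S$ realizing the desired behaviour.

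The heart of the argument is therefore: given $u \in W$, show there is always some $s \in S$ with $us >'_B u$ and some (possibly different) $s' \in S$ with $us' <'_B u$, where $B = N(w)$. First I would handle the ``no maximal element'' part. Since $w$ is an infinite reduced word, $N(w)$ is an infinite biclosed set; I claim one can always find $s \in S$ such that passing from $u$ to $us$ moves a root \emph{into} $N(u)$ that lies in $\Phi^+\setminus B$, or moves a root \emph{out} that lies in $B$ — either gives $us >'_B u$. If $l(us) = l(u)+1$ this means the new root $u(\alpha_s) \in N(us)\setminus N(u)$ should lie outside $B$; if $l(us) = l(u)-1$ the departing root $-u(\alpha_s) \in N(u)\setminus N(us)$ should lie inside $B$. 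Because $l(u)$ is finite while $N(w)$ is infinite, $N(u)$ cannot contain $B$, so there exist positive roots in $B \setminus N(u)$; the task is to show that the local combinatorics at $u$ — controlled by the finitely many simple reflections — always permits a move in the ``increasing'' direction. I would argue this by contradiction: if \emph{every} $s \in S$ gave $us \leq'_B u$, then $u$ would be a local maximum, and I would derive that $N(u) \triangle B$ must be closed-and-coclosed in a way forcing $B$ to be an inversion set of a finite element or to agree with $N(u)$ up to a set with no ``room to grow'', contradicting that $N(w)$ is infinite and biclosed (one can invoke that finite biclosed sets are exactly the $N(x)$, $x\in W$, together with the explicit description of $N(u) \triangle B$). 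The ``no minimal element'' part is symmetric: if every $s$ gave $us \geq'_B u$, then $u$ is a local minimum, forcing $N(u) \triangle B$ to behave like $N(\text{something})$ from the other side, again contradicting infiniteness of $N(w)$ (or its complement).

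A cleaner route, which I would try first, uses the $W$-action on biclosed sets and the fact $u \cdot N(v) = N(uv)$: the poset $(W, \leq'_{N(w)})$ is, under $v \mapsto u^{-1}v$ composed with a relabelling, related to $(W, \leq'_{u^{-1}\cdot N(w)})$, so without loss of generality I may reduce to studying neighbours of the identity $e$ and of its right multiples by tracking how the twisting set transforms; since $u^{-1}\cdot N(w)$ is again an infinite biclosed set (the $W$-action preserves the property of being the inversion set of an infinite reduced word, by the classification, or at least preserves infiniteness), it suffices to produce \emph{one} cover up and \emph{one} cover down at $e$ for an arbitrary infinite biclosed $B$. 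At $e$ this is transparent: $es = s \gtrless'_B e$ according to whether $\alpha_s \notin B$ or $\alpha_s \in B$, so I just need $S$ to contain a simple root in $B$ and a simple root outside $B$. If all simple roots lay in $B$ then $B \supseteq \Delta$ and, by closure, $B = \Phi^+$, which is not biclosed with infinite complement — wait, $\Phi^+$ \emph{is} biclosed, but it is not the inversion set of an \emph{infinite reduced word} (those have $\Delta_1 \subsetneq \Delta$ by Theorem \ref{infiniteword}, hence proper complement infinite — more simply, $N(w)'$ is also infinite biclosed for $w\in W_l$), so $B \neq \Phi^+$; and if all simple roots lay outside $B$ then $\Phi^+\setminus B \supseteq \Delta$ forces $B = \emptyset$, again excluded. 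The main obstacle is making the reduction to $e$ fully rigorous — specifically checking that conjugating the biclosed set by $u^{-1}$ genuinely transports the covering relations at $u$ to covering relations at $e$ and preserves ``$B$ is the inversion set of an infinite reduced word'' (or at least that $B$ and $B'$ are both nonempty and non-full); that is where I would spend the most care, falling back on the direct local-maximum/local-minimum contradiction above if the action argument proves delicate.
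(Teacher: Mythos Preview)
Your ``cleaner route'' (reduce to the identity via the $W$-action on biclosed sets) is a valid and genuinely different argument from the paper's. The paper does \emph{not} transport to $e$: for ``no maximal element'' it observes that the finitely many sets $U_i=N(us_i)\setminus N(u)$ all lie in $N(w)$, hence in $N(v)$ for some finite prefix $v$ of $w$, so $u$ would also be maximal in $(W,\leq'_{N(v)})$, which is isomorphic to ordinary weak order and has no maximal element. For ``no minimal element'' the paper first excludes a global minimum by exhibiting an incomparable element, then invokes the meet-semilattice property of $(W,\leq'_{N(w)})$ from \cite{orderpaper} to produce a strictly smaller element. Your approach is more symmetric and avoids that external semilattice input; in exchange it needs the isomorphism $(W,\leq'_B)\simeq(W,\leq'_{u^{-1}\cdot B})$ (Lemma~\ref{dotactioniso}), which you correctly flag as the point needing care. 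Two small clean-ups: the appeal to Theorem~\ref{infiniteword} is unnecessary and would restrict you to affine type --- it suffices to note that the $W$-action changes cardinalities by at most $l(u)$, so $B'=u^{-1}\cdot N(w)$ remains infinite with infinite complement, hence $\emptyset\subsetneq B'\subsetneq\Phi^+$; and the step ``$\Delta\subset B'$ implies $B'=\Phi^+$'' deserves one line of justification (induction on depth, using closedness of $B'$).

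Your first approach, by contrast, has a real gap. The sentence ``I would derive that $N(u)\triangle B$ must be closed-and-coclosed in a way forcing $B$ to be an inversion set of a finite element or to agree with $N(u)$ up to a set with no room to grow'' is not an argument --- it is a hope. Nothing in the hypotheses forces $N(u)\triangle B$ to be biclosed, and there is no obvious mechanism by which the condition ``$us_i\leq'_B u$ for all $i$'' pins down $B$ globally. The paper's prefix trick works precisely because it replaces the infinite $B=N(w)$ by a \emph{finite} $N(v)$ that still witnesses all the local cover relations at $u$; your approach~1 never identifies such a replacement, and without it the contradiction does not go through. If you want a direct argument for ``no minimal'' that parallels the paper's no-maximal proof, note that it is genuinely asymmetric: you cannot shrink $\Phi^+\setminus N(w)$ to a finite biclosed set containing the relevant $N(u)\setminus N(us_i)$ in the same way, which is why the paper switches to the semilattice argument there.
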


\begin{proof}
Suppose to the contrary that $u$ is a maximal element of $(W,\leq_{N(w)}')$ (i.e. for any element $p\in W$ either $u$ and $p$ are not comparable or $u\geq'_{N(w)} p$). Let $S=\{s_1,s_2,\cdots,s_n\}$ be the set of simple reflections. Then $us_i\lhd u$ for all $s_i\in S$. Let $U_i=N(us_i)\backslash N(u)$. These sets are contained in $N(w)$ by definition. Therefore there exists a (finite) prefix $v$ of $w$ such that $U_i\subset N(v)$ for all $i$. Also note that $N(u)\backslash N(us_i)\subset \Phi^+\backslash N(w)\subset \Phi^+\backslash N(v)$. Hence under the  twisted weak order $(W,\leq_{N(v)}')$, one has $us_i\lhd u$ for all $i$. Therefore $u$ is a maximal element in $(W,\leq_{N(v)}')$ since if there exists some $u'\geq u$ then by the chain property of the twisted weak order there must be some $s_i$ such that $us_i\vartriangleright u.$ But  $\leq_{N(v)}'$ is isomorphic to the standard right weak order which has no maximal element when $W$ is infinite. A contradiction. Therefore we have shown that $(W,\leq_{N(w)}')$ has no maximal elements.

Now we claim that it cannot happen that there exists $u\in W$ such that $u\leq_{N(w)}' x, \forall x\in W$. Since $N(u)$ is finite, there exists $\alpha\in \Phi^+$ such that $\alpha\in N(w)\backslash N(u)$. Then $N(s_{\alpha})\backslash N(u)\not\subset (\Phi^+\backslash N(w))$. So $u\nleq_{N(w)}' s_{\alpha}$ and we establish the claim. Now for any $u\in W$ we take $x\in W$ such that $u\not\leq_{N(w)}' x$, and then consider the meet of $u$ and $x$ (it exists since $(W,\leq_{N(w)}')$ is a meet semilattice). Such a meet is clearly not equal to $u$ as $u\not\leq_{N(w)}' x$. Therefore $(W,\leq_{N(w)}')$ has no minimal element.

The dual assertion about $(W,\leq_{\Phi^+\backslash N(w)}')$ can be proved in the same fashion and thus is omitted.
\end{proof}

\begin{lemma}\label{dotactioniso}
Let $B$ be a biclosed set in $\Phi^+$ and $w\in W$. Then as posets $(W,\leq_B')\simeq (W,\leq_{w\cdot B}')$ under the map $u\mapsto wu.$
\end{lemma}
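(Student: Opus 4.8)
The plan is to show that the map $\phi\colon u\mapsto wu$ is a poset isomorphism from $(W,\leq_B')$ to $(W,\leq_{w\cdot B}')$. Since $\phi$ is a bijection of the underlying set $W$ with inverse $u\mapsto w^{-1}u$, it suffices to prove that $u\leq_B' v$ holds if and only if $wu\leq_{w\cdot B}' wv$; by symmetry (replacing $w$ with $w^{-1}$ and $B$ with $w\cdot B$, using that $w^{-1}\cdot(w\cdot B)=B$ since the $W$-action on biclosed sets is a group action) one direction suffices. Here I am using the left twisted weak order, so the defining condition for $u\leq_B' v$ is $N(u^{-1})\setminus N(v^{-1})\subseteq B$ and $N(v^{-1})\setminus N(u^{-1})\subseteq \Phi^+\setminus B$; the corresponding condition for $wu\leq_{w\cdot B}' wv$ involves $N((wu)^{-1})=N(u^{-1}w^{-1})$ and $N((wv)^{-1})=N(v^{-1}w^{-1})$.

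The key computational step is a formula relating $N(u^{-1}w^{-1})$ to $N(u^{-1})$ via the $W$-action, namely $u^{-1}\cdot N(w^{-1}) = N(u^{-1}w^{-1})$, which is exactly the relation $u\cdot N(v)=N(uv)$ recorded in the preliminaries (applied with the roles of the elements adjusted). First I would rewrite both inclusion conditions for $wu\leq_{w\cdot B}' wv$ in terms of the sets $N(u^{-1})$, $N(v^{-1})$, and $w^{-1}$ using the explicit formula $x\cdot C=(N(x)\setminus x(-C))\cup(x(C)\setminus(-N(x)))$. The idea is that applying the linear map $w$ (more precisely, translating by the inversion set of $w$) to the symmetric-difference-type conditions should, after unwinding the definition of $w\cdot B$, reduce precisely to the conditions $N(u^{-1})\setminus N(v^{-1})\subseteq B$ and $N(v^{-1})\setminus N(u^{-1})\subseteq\Phi^+\setminus B$. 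In other words, the "twist" built into the $W$-action on biclosed sets is designed exactly to make this translation work, so the two sets of conditions should be formally equivalent once everything is expanded.

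The main obstacle I anticipate is the bookkeeping in the symmetric-difference manipulation: one must carefully track which roots in $N(u^{-1}w^{-1}) \setminus N(v^{-1}w^{-1})$ come from $N(w^{-1})$ versus from the $w^{-1}$-image of $N(u^{-1})\setminus N(v^{-1})$, and verify that the sign conventions ($-B$ versus $B$, $\Phi^-$ versus $\Phi^+$) line up. A clean way to organize this is to recall (or reprove in one line) the general identity that for any biclosed sets $C, D$ in $\Phi^+$ one has $x\cdot C\subseteq x\cdot D$ iff $C\subseteq D$ (monotonicity of the $W$-action), and more to the point the identity $N(u^{-1}w^{-1}) \,\triangle\, N(v^{-1}w^{-1})$ relates to $w^{-1}$ applied to $N(u^{-1})\,\triangle\, N(v^{-1})$ with controlled signs. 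Alternatively, and perhaps more cleanly, I would invoke the known fact (stated in the preliminaries) that $\leq_{N(x)}'$ is isomorphic to the ordinary weak order via $u\mapsto x^{-1}u$ only in the special case $B=N(x)$, and try to bootstrap the general case; but I expect the direct symmetric-difference computation, done carefully once, to be the shortest route. Once the equivalence $u\leq_B' v \iff wu\leq_{w\cdot B}' wv$ is established, the theorem follows immediately since an order-preserving bijection whose inverse is also order-preserving is a poset isomorphism.
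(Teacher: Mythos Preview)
Your proposal contains a genuine error: you have chosen the \emph{left} twisted weak order, but in this paper $\leq_B'$ denotes the \emph{right} twisted weak order (this is the stated convention in the preliminaries, and the paper's own proof works with $N(wu)$ rather than $N((wu)^{-1})$). With the left order the statement you set out to prove is actually false. For instance, in type $A_2$ with simple reflections $s_1,s_2$ and simple roots $\alpha_1,\alpha_2$, take $B=\emptyset$, $w=s_2$, $u=s_1$, $v=s_2s_1$. Then $N(u^{-1})=\{\alpha_1\}\subset\{\alpha_1,\alpha_1+\alpha_2\}=N(v^{-1})$, so $u\leq_\emptyset' v$ in the left order; but $wu=s_2s_1$, $wv=s_1$, $w\cdot\emptyset=N(s_2)=\{\alpha_2\}$, and $N((wu)^{-1})\setminus N((wv)^{-1})=\{\alpha_1+\alpha_2\}\not\subset\{\alpha_2\}$, so $wu\not\leq_{w\cdot\emptyset}' wv$ in the left order.

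The symptom of this error is already visible in your ``key computational step'': the identity $u^{-1}\cdot N(w^{-1})=N(u^{-1}w^{-1})$ has $u^{-1}$ (which varies) as the acting element, not $w$ (which is fixed), so it gives no direct handle on $w\cdot B$; when you then take the difference with the corresponding expression for $v$, the two acting elements $u^{-1}$ and $v^{-1}$ differ and nothing cancels. For the right order one uses instead $N(wu)=w\cdot N(u)$ and $N(wv)=w\cdot N(v)$, with the \emph{same} $w$ acting in both; the difference $N(wu)\setminus N(wv)$ then splits cleanly into two pieces that can be compared to the two pieces of $w\cdot B$, which is exactly the paper's computation. Once you switch to the right order, the rest of your outline (expand via the explicit formula for $w\cdot(-)$, handle the two parts of the disjoint union separately, dualize via $\Phi^+\setminus B$ for the second inclusion, then invoke $w^{-1}\cdot(w\cdot B)=B$ for the inverse map) coincides with the paper's argument.
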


\begin{proof}
Suppose that $u\leq_B'v$. We show that $N(wu)\backslash N(wv)$ is contained in $w\cdot B=(N(w)\backslash -w(B))\uplus (w(B)\backslash -N(w))$.

Note that $N(wu)\backslash N(wv)=((N(w)\backslash -wN(u))\backslash (N(w)\backslash -wN(v)))\uplus ((wN(u)\backslash -N(w)\backslash (w(N(v))\backslash -N(w)))).$
Take $\alpha\in (N(w)\backslash -wN(u))\backslash (N(w)\backslash -wN(v))$. This implies that $\alpha\in N(w)$ and $-w^{-1}(\alpha)\in N(v)\backslash N(u)$. Since $N(v)\subset N(u)\subset \Phi^+\backslash B$, $\alpha\not\in -w(B).$
So $\alpha\in N(w)\backslash -w(B).$
Take $\alpha\in (wN(u)\backslash -N(w)\backslash (w(N(v))\backslash -N(w)))$. This implies that $\alpha\not\in -N(w)$ and $\alpha\in w(N(u)\backslash N(v))$. Therefore $\alpha\in w(B)\backslash -N(w).$
Therefore we see that $N(wu)\backslash N(wv)\subset w\cdot B.$

Next we show that $N(wv)\backslash N(wu)\subset \Phi^+\backslash w\cdot B$. Since $v\leq_{\Phi^+\backslash B}'u$, the above argument shows that
$N(wv)\backslash N(wu)\subset w\cdot (\Phi^+\backslash B)$. It follows from the formula $w\cdot B:=(N(w)\backslash w(-B))\cup (w(B)\backslash
(-N(w)))$ that $w\cdot (\Phi^+\backslash B)=\Phi^+\backslash w\cdot B.$

Therefore we conclude that $wu\leq_{w\cdot B}' wv$.

The above arguments also prove that the inverse map $u\mapsto w^{-1}u$  preserves the order since $w^{-1}\cdot(w\cdot B)=B$ and hence the lemma is proved.
\end{proof}

Now let $\Phi,\Psi^+$ and $\Delta$ be a finite crystallographic root system, a positive system of it and the corresponding simple system respectively. Suppose that $\Delta_1\subset \Delta$. Let $\Phi_{\Delta_1}=\mathbb{R}\Delta_1\cap \Phi$ be the root subsystem generated by $\Delta_1$. Then $\mathbb{R}_{\geq 0}\Delta_1\cap \Phi=\Phi_{\Delta_1}^+$ is a positive system of $\Phi_{\Delta_1}$.
Let $W$ be the finite Weyl group with the root system $\Phi$ and $\widetilde{W}$ be  the corresponding affine Weyl group. A subset $\Gamma$ of $\Phi$ is said to be $\mathbb{Z}$-closed if for $\alpha,\beta\in \Gamma$ such that $\alpha+\beta\in \Phi$ one has $\alpha+\beta\in \Gamma.$

 \begin{lemma}\label{assemble}
 Let $\Gamma^+$ be another positive system of $\Phi_{\Delta_1}$. Then $(\Psi^+\backslash \Phi_{\Delta_1}^+)\cup \Gamma^+$ is a positive system of $\Phi$. If $\Delta_2\subset \Delta$ such that $(\Delta_1,\Delta_2)=0$, $\Delta_2$ is also a subset of the simple system of $(\Psi^+\backslash \Phi_{\Delta_1}^+)\cup \Gamma^+$.
\end{lemma}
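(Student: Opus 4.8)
The plan is to realize $(\Psi^+\setminus\Phi_{\Delta_1}^+)\cup\Gamma^+$ as the image $w\Psi^+$ of the standard positive system under a suitably chosen element $w$ of the parabolic subgroup $W_1:=\langle s_\gamma\mid\gamma\in\Delta_1\rangle$; once this is set up, both assertions fall out of the fact that $w$ carries the simple system $\Delta$ of $\Psi^+$ onto the simple system $w\Delta$ of $w\Psi^+$.

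First I would recall the two standard facts about $W_1$ that the argument rests on. (i) $W_1$ is the Weyl group of the sub–root system $\Phi_{\Delta_1}$, it has $\Delta_1$ as a simple system with corresponding positive system $\mathbb{R}_{\geq0}\Delta_1\cap\Phi=\Phi_{\Delta_1}^+$, and it acts transitively on the positive systems of $\Phi_{\Delta_1}$; in particular there is $w\in W_1$ with $w\Phi_{\Delta_1}^+=\Gamma^+$. (ii) For every $w\in W_1$ one has $N(w)\subseteq\Phi_{\Delta_1}^+$; consequently $w$ permutes the set $\Psi^+\setminus\Phi_{\Delta_1}^+$ (since $\Psi^+\cap\Phi_{\Delta_1}=\Phi_{\Delta_1}^+$ we have $\Psi^+\setminus\Phi_{\Delta_1}^+=\Psi^+\setminus\Phi_{\Delta_1}$, and $w$ preserves $\Phi_{\Delta_1}$ setwise while $N(w^{-1})\subseteq\Phi_{\Delta_1}^+$ prevents any root of $\Psi^+\setminus\Phi_{\Delta_1}$ from being sent into $\Psi^-$; injectivity on this finite set then forces a permutation).

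With $w$ chosen as in (i), I would compute
\[
w\Psi^+=w\bigl((\Psi^+\setminus\Phi_{\Delta_1}^+)\uplus\Phi_{\Delta_1}^+\bigr)=(\Psi^+\setminus\Phi_{\Delta_1}^+)\uplus\Gamma^+,
\]
using (ii) for the first block and the choice of $w$ for the second. Since the Weyl group permutes the positive systems of $\Phi$, this shows $(\Psi^+\setminus\Phi_{\Delta_1}^+)\cup\Gamma^+$ is a positive system of $\Phi$, and its simple system is $w\Delta$. For the statement about $\Delta_2$: given $\beta\in\Delta_2$, the hypothesis $(\Delta_1,\Delta_2)=0$ forces each generator $s_\gamma$ $(\gamma\in\Delta_1)$ of $W_1$ to fix $\beta$, hence $w$ (and $w^{-1}$) fixes $\beta$; thus $w^{-1}\beta=\beta\in\Delta_2\subseteq\Delta$, i.e. $\beta\in w\Delta$, giving $\Delta_2\subseteq w\Delta$, which is exactly the simple system just identified.

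I expect the only point needing genuine care to be fact (ii) — that a parabolic subgroup permutes the positive roots lying outside the corresponding parabolic sub–root system — since this is precisely the structural input that makes the surgery replacing $\Phi_{\Delta_1}^+$ by $\Gamma^+$ yield a positive system at all. Everything else is bookkeeping with the $W$-action on positive and simple systems and with the orthogonality $(\Delta_1,\Delta_2)=0$; a linear-functional argument (take a defining vector for $\Psi^+$, project off $\linspan\Delta_1$, and perturb by a defining vector for $\Gamma^+$) is also available but is messier to make rigorous because of the need to avoid new orthogonalities, so I would favor the Weyl-group approach above.
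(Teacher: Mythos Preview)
Your proposal is correct and follows essentially the same approach as the paper: choose $w$ in the parabolic subgroup $W_1=\langle s_\gamma\mid\gamma\in\Delta_1\rangle$ with $w\Phi_{\Delta_1}^+=\Gamma^+$, observe that $w\Psi^+=(\Psi^+\setminus\Phi_{\Delta_1}^+)\cup\Gamma^+$, and note $w(\Delta_2)=\Delta_2$. The paper's proof is just a terser version of yours, taking fact~(ii) (that $W_1$ permutes $\Psi^+\setminus\Phi_{\Delta_1}^+$) as understood.
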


\begin{proof}
Find the element $w$ in the parabolic subgroup generated by $s_{\alpha},\alpha\in \Delta_1$ such that $w\Phi_{\Delta_1}^+=\Gamma^+$. Act $w$ on $\Psi^+$ and we get $(\Psi^+\backslash \Phi_{\Delta_1}^+)\cup \Gamma^+$. Note that we have $w(\Delta_2)=\Delta_2.$
\end{proof}

Let $B$ be a biclosed set in $\widetilde{\Phi}^+$.

\begin{lemma}\label{nomaxmintwo}
If $B$ and $\widetilde{\Phi}^+\backslash B$ are both infinite, then $(\widetilde{W},\leq_B')$ has no maximal element or minimal element.
\end{lemma}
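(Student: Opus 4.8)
The plan is to give a local characterization of the maximal (and dually the minimal) elements of $(\widetilde{W},\leq_B')$ and then to show that a maximal element can only exist when $\widetilde{\Phi}^+\setminus B$ is finite, contradicting the hypothesis. Write $\widetilde{\Delta}:=\Delta\cup\{\delta-\rho\}$ for the simple system of $\widetilde{\Phi}^+$ and $S$ for the set of simple reflections of $\widetilde{W}$, with $\alpha_s$ the simple root of $s\in S$. I will work with the signed set $B^{\pm}:=B\cup(-(\widetilde{\Phi}^+\setminus B))\subset\widetilde{\Phi}$, which contains exactly one of $\gamma,-\gamma$ for each $\gamma\in\widetilde{\Phi}$; call a subset of $\widetilde{\Phi}$ with this last property \emph{total}, and note that $B^{\pm}$ and $w(\widetilde{\Phi}^+)$ (for $w\in\widetilde{W}$) are total, while a total set contained in a total set must equal it.

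First I would record, via the chain property of the twisted weak order, that $u\in\widetilde{W}$ is maximal in $(\widetilde{W},\leq_B')$ if and only if $us<_B'u$ for every $s\in S$: if $us>_B'u$ for some $s$ then $u$ is not maximal, while if $u<_B'v$ for some $v\neq u$ the chain property yields an $s\in S$ with $u\vartriangleleft us$. Next I would unwind $\leq_B'$: for $s\in S$ set $\beta=u(\alpha_s)\in\widetilde{\Phi}$; exactly one of $\beta,-\beta$ is positive, the inversion sets $N(us)$ and $N(u)$ differ precisely in that positive root, and checking the definition of $<_B'$ in the cases $\beta\in\widetilde{\Phi}^+$ and $\beta\in\widetilde{\Phi}^-$ separately gives $us<_B'u\Longleftrightarrow u(\alpha_s)\in B^{\pm}$. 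Hence $u$ is maximal in $(\widetilde{W},\leq_B')$ exactly when $u(\widetilde{\Delta})\subset B^{\pm}$.

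The key technical point is that $B^{\pm}$ is closed under nonnegative real combinations landing in $\widetilde{\Phi}$. For $\gamma_1,\gamma_2\in B^{\pm}$ and a root $\gamma_3=k_1\gamma_1+k_2\gamma_2$ with $k_1,k_2\geq 0$: if $\gamma_1,\gamma_2$ have the same sign then $\gamma_3$ has it too, and closedness of $B$ (when positive) or of $\widetilde{\Phi}^+\setminus B$ (when negative) applies; if $\gamma_1,\gamma_2$ have opposite signs, one rewrites the relation $\gamma_3=k_1\gamma_1+k_2\gamma_2$ as an expression of $\gamma_1$ (or of $-\gamma_2$) as a nonnegative combination of the positive roots among $\{\gamma_3,-\gamma_3\}$ and $\{\gamma_1,-\gamma_2\}$, and then the assumption that $\gamma_3$ lies on the wrong side of $B^{\pm}$ contradicts closedness of $B$ or of $\widetilde{\Phi}^+\setminus B$. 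I expect this mixed-sign case to be the only genuinely non-formal step, and it is exactly where biclosedness of $B$ (both $B$ and its complement closed) is essential.

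Finally, suppose $u$ is maximal, so $u(\widetilde{\Delta})\subset B^{\pm}$. Since $u(\widetilde{\Delta})$ is the simple system of the positive system $u(\widetilde{\Phi}^+)$ and every non-simple positive root of that system is a sum of two positive roots of smaller height, induction on height together with the closedness of $B^{\pm}$ gives $u(\widetilde{\Phi}^+)\subset B^{\pm}$; as both are total, $B^{\pm}=u(\widetilde{\Phi}^+)$. Intersecting with $\widetilde{\Phi}^+$ then gives $B=u(\widetilde{\Phi}^+)\cap\widetilde{\Phi}^+=\widetilde{\Phi}^+\setminus N(u)$, so $\widetilde{\Phi}^+\setminus B=N(u)$ is finite, contrary to hypothesis; hence $(\widetilde{W},\leq_B')$ has no maximal element. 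For minimal elements, I would use that $us<_B'u\Longleftrightarrow u<_{\widetilde{\Phi}^+\setminus B}'us$, so that $u$ is minimal in $(\widetilde{W},\leq_B')$ if and only if it is maximal in $(\widetilde{W},\leq_{\widetilde{\Phi}^+\setminus B}')$; since the hypothesis is symmetric under $B\leftrightarrow\widetilde{\Phi}^+\setminus B$, the absence of minimal elements follows from the case already treated.
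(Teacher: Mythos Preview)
Your proposal is correct and takes a genuinely different route from the paper's own proof.

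The paper argues structurally: it invokes the classification of biclosed sets in $\widetilde{\Phi}^+$ to write $I_B=P(\Psi^+,\Delta_1,\Delta_2)$, disposes of the case $\Delta_2=\emptyset$ (or $\Delta_1=\emptyset$) via Lemma~\ref{nomaxminone} and Theorem~\ref{infiniteword}, reduces via Lemma~\ref{dotactioniso} to $B=P(\Psi^+,\Delta_1,\Delta_2)^{\wedge}$, and then, assuming a minimal element $u$, manufactures an auxiliary positive system $\Xi^+$ so that $u$ is also minimal for the twist by $N(w)=(\Xi^+\setminus\Phi_{\Delta_1})^{\wedge}$ with $w$ an infinite reduced word---contradicting Lemma~\ref{nomaxminone}. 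Your argument bypasses all of this machinery: from the chain property you characterize maximality by $u(\widetilde{\Delta})\subset B^{\pm}$, observe that biclosedness of $B$ is exactly $2$-closure closedness of the total set $B^{\pm}$ in $\widetilde{\Phi}$, and then a depth/height induction forces $u(\widetilde{\Phi}^+)=B^{\pm}$, i.e.\ $\widetilde{\Phi}^+\setminus B=N(u)$ is finite. The duality $(\widetilde W,\leq_B')\cong(\widetilde W,\leq_{\widetilde{\Phi}^+\setminus B}')^{\mathrm{op}}$ then handles minimal elements.

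What each approach buys: your argument is shorter, uses no affine-specific input, and in fact proves the lemma for an arbitrary Coxeter system (the depth induction ``non-simple $\gamma$ equals $s_\alpha\gamma + c\alpha$ with $c>0$'' works in general); it even yields the sharper statement that a maximal (resp.\ minimal) element exists in $(\widetilde W,\leq_B')$ only when $B$ is cofinite (resp.\ finite). The paper's route, while longer, is consistent with its overall strategy of leveraging the explicit description of affine biclosed sets, and the intermediate Lemmas~\ref{nomaxminone}--\ref{assemble} are reused later (e.g.\ in Theorem~\ref{setisinfinite}). As a side remark, the closedness of $B^{\pm}$ that you verify is precisely the fact, asserted without proof in Section~\ref{omsec}, that $B\mapsto B\uplus -(\widetilde{\Phi}^+\setminus B)$ carries $2$-closure biclosed sets to $2$-closure hemispaces.
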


\begin{proof}
By the characterization of the biclosed sets in $\widetilde{\Phi}$, $I_B=P(\Psi^+,\Delta_1,\Delta_2)$ for some positive system $\Psi^+$ and some orthogonal subsets $\Delta_1, \Delta_2$ of its simple system.
By Lemma \ref{nomaxminone} and Theorem \ref{infiniteword}, it suffices to treat the case where $I_B=P(\Psi^+,\Delta_1,\Delta_2)$ where $\Delta_1$ and $\Delta_2$ are both nonempty. Further it suffices to check the case where $B=P(\Psi^+,\Delta_1,\Delta_2)^{\wedge}$ as all $(W,\leq_B')$ with $I_B=P(\Psi^+,\Delta_1,\Delta_2)$  are isomorphic by Lemma \ref{dotactioniso}. We suppose to the contrary that there exists $u\in W$ such that  $us_i\rhd u$ for all $s_i$ simple reflection of $\widetilde{W}$ (so $u$ is minimal by the chain property of the twisted weak order).

We write  $A$ for $A_{N(u)}$. It is $\mathbb{Z}-$closed in $\Phi$ by Lemma 3.13 in  \cite{wang}. We also have $A\cap -A=\emptyset$ by subsection \ref{biclosedaffine}.  Then we deduce that $A':=\Phi_{\Delta_2}\cap A$ is $\mathbb{Z}$-closed in $\Phi_{\Delta_2}$
and that $A'\cap -A'=\emptyset$. Then by \cite{Bourbaki} Chapter IV, $\S 1$, Proposition 22, $A'$ is contained in a positive system $\Gamma^+$ in $\Phi_{\Delta_2}.$ Let $\mathbb{R}_{\geq 0}\Delta_2\cap \Phi=\Phi_{\Delta_2}^+.$
Then  $A'$ is contained in $(\Psi^+\backslash \Phi_{\Delta_2}^+\cup \Gamma^+)\backslash \Phi_{\Delta_1}$ where $\Psi^+\backslash \Phi_{\Delta_2}^+\cup \Gamma^+$ is a positive system in $\Phi$ by Lemma \ref{assemble}. We denote this positive system by $\Xi^+$. Also by Lemma \ref{assemble} $\Delta_1$ is a subset of $\Xi^+$'s simple system.
Since $A_{N(u)\backslash N(us_i)}\cap \Phi_{\Delta_2}\subset A_{N(u)}\cap \Phi_{\Delta_2}(=A')$, so
\begin{equation}\label{partone}
A_{N(u)\backslash N(us_i)}\cap \Phi_{\Delta_2}\subset \Xi^+\backslash \Phi_{\Delta_1}.
\end{equation}

Because $N(u)\backslash N(us_i)\subset B$, $A_{N(u)\backslash N(us_i)}\subset P(\Psi^+,\Delta_1,\Delta_2)=(\Psi^+\backslash \Phi_{\Delta_1})\cup \Phi_{\Delta_2}$. Hence
\begin{equation}\label{parttwo}
A_{N(u)\backslash N(us_i)}\backslash \Phi_{\Delta_2}\subset (\Psi^+\backslash \Phi_{\Delta_1})\backslash \Phi_{\Delta_2}\subset (\Psi^+\backslash \Phi_{\Delta_2}^+\cup \Gamma^+)\backslash \Phi_{\Delta_1}=\Xi^+\backslash \Phi_{\Delta_1}.
 \end{equation}
 Hence by equation \ref{partone} and \ref{parttwo} we conclude that $A_{N(u)\backslash N(us_i)}\subset \Xi^+\backslash \Phi_{\Delta_1}$. Consequently $N(u)\backslash N(us_i)\subset (\Xi^+\backslash \Phi_{\Delta_1})^{\wedge}$ for all $i$. The right hand side is the inversion set of an infinite reduced word $w$ by Theorem \ref{infiniteword}.

On the other hand, one has the inclusion $N(us_i)\backslash N(u)\subset \widetilde{\Phi}^+\backslash P(\Psi^+,\Delta_1,\Delta_2)^{\wedge}\subset \widetilde{\Phi}^+\backslash (\Xi^+\backslash \Phi_{\Delta_1})^{\wedge}$. Thus we conclude $u$ is minimal under $(W,\leq_{N(w)}')$. This contradicts Lemma \ref{nomaxminone}.

Similarly one can also prove that the poset has no maximal element.
\end{proof}

\begin{theorem}\label{setisinfinite}
Suppose that $\widetilde{W}$ is irreducible and of rank $\geq 3.$ Let $k\in \mathbb{Z}$ and let $B$ be an infinite biclosed set in $\widetilde{\Phi}^+$ such that $\widetilde{\Phi}^+\backslash B$ is also infinite. The set $\{w\in \widetilde{W}|\,l_B'(w)=k\}$ is infinite.
\end{theorem}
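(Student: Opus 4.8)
The plan is to reduce the statement to a computation with pure translations, in four steps.

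\emph{Step 1 (normalisation).} By the description of biclosed sets in \S\ref{biclosedaffine} one may write $B=v\cdot P(\Psi^+,\Delta_1,\Delta_2)^{\wedge}$ with $v\in W'$, and Lemma~\ref{dotactioniso} then gives a poset isomorphism $(\widetilde W,\leq_B')\cong(\widetilde W,\leq_{P(\Psi^+,\Delta_1,\Delta_2)^{\wedge}}')$ carrying the grading $l_B'$ to that of $l'_{P(\Psi^+,\Delta_1,\Delta_2)^{\wedge}}$ up to an additive constant. Hence one may assume $B=P^{\wedge}$, where $P:=P(\Psi^+,\Delta_1,\Delta_2)\subseteq\Phi$ and $\Delta_1\perp\Delta_2$ are subsets of the simple system of $\Psi^+$. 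The hypotheses that $B$ and $\widetilde\Phi^+\setminus B$ be both infinite mean exactly that $P\neq\emptyset$ and $P\neq\Phi$; since $\Delta_1\perp\Delta_2$ and $\Phi$ is irreducible, this forces $\Delta_1\cup\Delta_2$ to be a \emph{proper} subset of the simple system of $\Psi^+$. Also $\Phi$ has rank $n\geq2$, since $\widetilde W$ has rank $\geq3$.

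\emph{Step 2 (one level set is enough).} For $w\in\widetilde W$ and $s\in S$ the sets $N(w)$ and $N(ws)$ differ by exactly one root, so $w$ and $ws$ are $\leq_B'$-comparable with $l_B'(ws)=l_B'(w)\pm1$. By Lemma~\ref{nomaxmintwo} the poset $(\widetilde W,\leq_B')$ has neither a maximal nor a minimal element, so the chain property yields, for every $w$, simple reflections $s,s'$ with $l_B'(ws)=l_B'(w)+1$ and $l_B'(ws')=l_B'(w)-1$. Writing $L_k=\{w\in\widetilde W\mid l_B'(w)=k\}$, it follows that $L_{k+1}\cup L_{k-1}\subseteq\{ws\mid w\in L_k,\ s\in S\}$, whence $|L_{k\pm1}|\leq|S|\cdot|L_k|$. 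Consequently, if one $L_k$ is finite then all of them are; so it suffices to prove that $L_{k_0}$ is infinite for a single $k_0$, and I will take $k_0=0$.

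\emph{Step 3 (translations).} Let $T\le\widetilde W$ be the translation subgroup and, for $w\in W$, let $C_w=\{\mu\in V\mid(\alpha,\mu)>0\text{ for all }\alpha\in w\Phi^+\}$. From the description of $N(t_\mu)$ for $\mu\in\overline{C_w}$ and the form of $P^{\wedge}$ one obtains the exact identity
\[
l_B'(t_\mu)=(\eta_w,\mu)\quad\text{for }\mu\in\overline{C_w}\cap T,\qquad
\eta_w:=\sum_{\alpha\in w\Phi^+\setminus P}\alpha-\sum_{\alpha\in w\Phi^+\cap P}\alpha ,
\]
so $l_B'$ is linear on each closed Weyl chamber, with a gradient depending only on the chamber. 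Suppose $w$ can be chosen so that $\eta_w$ lies neither in the interior of the cone on $w\Phi^+$ nor in the interior of its opposite — equivalently, so that
\[
f(w):=w^{-1}\eta_w=\sum_{\beta\in\Phi^+}\beta-2\!\!\sum_{\substack{\beta\in\Phi^+\\ w\beta\in P}}\!\!\beta
\]
is neither a strictly positive nor a strictly negative combination of the simple roots of $\Phi^+$. Then $\{\mu\in\overline{C_w}\mid(\eta_w,\mu)=0\}$ is a rational polyhedral cone of dimension $\geq n-1\geq1$, hence contains infinitely many points of $T$, and each such $\mu$ gives $l_B'(t_\mu)=0$; so $L_0$ is infinite, and by Step~2 so is every $L_k$. (Rank $\geq3$, i.e.\ $n\geq2$, enters precisely here; for $\widetilde A_1$ the chambers $\overline{C_w}$ are half-lines, and the conclusion genuinely fails.)

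\emph{Step 4 (the main obstacle).} It remains to produce a Weyl chamber $C_w$ with $f(w)$ neither a strictly positive nor a strictly negative combination of simple roots — equivalently, to rule out the degenerate possibility that $f(w)$ lies in the interior of the positive simple‑root cone or in its negative for \emph{every} $w\in W$. This I expect to be the technical heart. The tools are the properness of $\Delta_1\cup\Delta_2$ from Step~1 together with the transformation rule
\[
f(ws_i)=s_if(w)+2\varepsilon_i\,\alpha_i,\qquad
\varepsilon_i=\begin{cases}\ 1,& w\alpha_i\in\Phi_{\Delta_1},\\[1pt] -1,& w\alpha_i\in\Phi_{\Delta_2},\\[1pt]\ 0,&\text{otherwise,}\end{cases}
\]
which one uses to examine $f$ on the chambers $C_{xv}$, where $\Psi^+=x\Phi^+$ and $v$ is a short word in $S$. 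I anticipate splitting into the case $\Delta_2=\emptyset$ — where, by Theorem~\ref{infiniteword}, $B$ is the inversion set of an infinite reduced word, $P=\Psi^+\setminus\Phi_{\Delta_1}$ with $\Delta_1\subsetneq\Delta$, and $f(x)=2\sum_{\beta\in\Phi_{x^{-1}\Delta_1}^+}\beta-\sum_{\beta\in\Phi^+}\beta$, so that $C_x$ already works unless this vector happens to be a strictly negative combination of simple roots, in which event one passes to a neighbouring chamber $C_{xs_i}$ or $C_{xs_is_j}$ — and the case $\Delta_2\neq\emptyset$, where the full parabolic $\Phi_{\Delta_2}\subseteq P$ makes $f$ escape the cone more readily. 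Carrying out this verification for all finite irreducible $\Phi$ of rank $\geq2$ is the crux of the proof.
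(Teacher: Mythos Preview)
Your approach is genuinely different from the paper's, and Steps 1--3 are essentially sound, with two small slips: in Step~1 the element $v$ ranges over all of $\widetilde W$, not just $W'$ (this does not affect the reduction, since Lemma~\ref{dotactioniso} applies to any $v\in\widetilde W$); and in Step~3 the zero-cone $\{\mu\in\overline{C_w}:(\eta_w,\mu)=0\}$ is guaranteed only to have dimension $\geq1$, not $\geq n-1$ (for instance when $f(w)$ has a single vanishing coefficient and the rest positive), but $\geq1$ is all you actually use. The real gap is Step~4, which you yourself flag as ``the crux of the proof'': you give a transformation rule and a case split but do not carry out the verification that some chamber $C_w$ has $f(w)$ with a nonpositive and a nonnegative simple-root coefficient, uniformly over all irreducible $\Phi$ of rank $\geq2$ and all admissible $(\Delta_1,\Delta_2)$. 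Until that is done the argument is incomplete.

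The paper avoids Step~4 entirely by arguing by contradiction. Your Step~2 reasoning (via Lemma~\ref{nomaxmintwo}) is exactly the opening of the paper's proof: if $L_k=\{w_1,\dots,w_t\}$ were finite, every $u\in\widetilde W$ would be $\leq_B'$-comparable to some $w_i$. After the same normalisation to $B=P(\Psi^+,\Delta_1,\Delta_2)^\wedge$, the paper then simply chooses $\alpha\in\Delta\setminus\Delta_1$ and the highest root $\rho\in\Psi^+$, picks $p,q$ large enough that $\alpha+p\delta$ and $-\rho+q\delta$ avoid the finite set $\bigcup_iN(w_i)$, and exhibits a single $v\in\widetilde W$ (a finite prefix of an appropriate infinite reduced word) whose inversion set contains both roots. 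Since $\alpha+p\delta\in B$ while $-\rho+q\delta\notin B$, this $v$ is $\leq_B'$-incomparable to every $w_i$: contradiction. The entire construction takes a few lines and uses rank $\geq3$ only to ensure $\alpha\neq\rho$. Your direct approach trades this short witness construction for a chamber-geometric statement (Step~4) that, while plausible, would require either a uniform argument or a type-by-type check you have not supplied.
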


\begin{proof}
Suppose that to the contrary there exists $k\in \mathbb{Z}$, the set $\{w\in \widetilde{W}|\,l_B'(w)=k\}$ is finite. Suppose that this set is $\{w_1,w_2,\cdots,w_t\}$. We show that under the right $B$-twisted weak order $\leq_B'$ any element is either greater than one of $w_i$ or less than one of $w_i$ (depending whether its twisted length is $\geq$ or $\leq k$).

 Let $u$ be an element in $W$ and assume  without loss of generality that $l_B'(u)>k.$ By Lemma \ref{nomaxmintwo} $u$ is not minimal. So one can find $s\in S$ such that $us<_B'u$ and $l_B'(us)=l_B'(u)-1$. Proceed this way one sees that $u$ must be greater than some element with twisted length $k$.

 Now by Lemma \ref{dotactioniso} we only need to consider the case where $B=P(\Psi^+,\Delta_1,\Delta_2)^{\wedge}$. If $\Delta_1=\Delta$, $B$ is empty.  Let $\alpha\in \Delta\backslash \Delta_1$ and $\rho$ be the highest root in $\Psi^+$. Since $\cup_{i}N(w_i)$ is finite, there must exist some positive roots $\alpha+p\delta\not\in \cup_{i}N(w_i)$ and $-\rho+q
\delta\not\in \cup_{i}N(w_i)$. We claim that there exists some $v$ such that $N(v)$ contains $-\rho+q\delta, \alpha+p\delta.$

Proof of the claim. Since the rank of $\Phi$ is at least two, $\alpha\neq \rho$. So $-\rho,\alpha\in \Omega^+:=\Psi^-\backslash \{-\alpha\}\cup \{\alpha\}$, which is another positive system. So $-\rho+q\delta, \alpha+p\delta$ are contained in $(\Omega^+)^{\wedge}$, which is the inversion set of an infinite reduced word. Consequently they are contained in the inversion set of a finite prefix $v$ of this infinite reduced word.

Note that $\Delta_2\neq \Delta$ since that will cause $B=\widetilde{\Phi}^+=\Phi^{\wedge}$ and then $B$ is cofinite. Therefore $-\rho+q\delta\not\in B$.
However $\alpha+p\delta\in B$.
Therefore $v$ is not comparable to any of $w_i$ under the right $B$-twisted weak order. Contradiction.
\end{proof}

\begin{corollary}
Suppose that $\widetilde{W}$ is irreducible and of rank $\geq 3.$ Let $B$ be an infinite biclosed set in $\widetilde{\Phi}^+$ such that $\widetilde{\Phi}^+\backslash B$ is also infinite. Then the $B$-twisted weak Bruhat order $(\widetilde{W},\leq_B')$ (resp. the $B$-twisted strong Bruhat order $(\widetilde{W},\leq_B)$) has an infinite antichain.
\end{corollary}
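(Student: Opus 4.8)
The plan is to deduce the corollary immediately from Theorem~\ref{setisinfinite} together with the structural fact, recorded in the preliminaries, that $(\widetilde{W},\leq_B')$ is graded by the right twisted length function $l_B'$ and $(\widetilde{W},\leq_B)$ is graded by the left twisted length function $l_B$. The elementary observation that drives everything is that in a graded poset any set of elements all having the same rank is automatically an antichain: if $x<y$ in a graded poset then $\operatorname{rank}(x)<\operatorname{rank}(y)$, so two distinct elements with equal rank cannot be comparable.

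For the weak order I would fix $k\in\mathbb{Z}$ arbitrarily. By Theorem~\ref{setisinfinite} the set $\{w\in\widetilde{W}\mid l_B'(w)=k\}$ is infinite, and since $(\widetilde{W},\leq_B')$ is graded with rank function $l_B'$, this set is an antichain by the observation above. Hence $(\widetilde{W},\leq_B')$ has an infinite antichain. For the strong order I would transfer the count along the inversion map: since $l_B(w^{-1})=l_B'(w)$ and $w\mapsto w^{-1}$ is a bijection of $\widetilde{W}$, the set $\{w\in\widetilde{W}\mid l_B(w)=k\}$ is exactly the image under inversion of $\{w\in\widetilde{W}\mid l_B'(w)=k\}$, hence also infinite; applying the graded-poset observation to $(\widetilde{W},\leq_B)$ with rank function $l_B$ shows this set is an infinite antichain in $(\widetilde{W},\leq_B)$.

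I expect essentially no obstacle here, since the statement is a formal consequence of Theorem~\ref{setisinfinite} and of gradedness. The only subtlety worth flagging is that one cannot obtain the strong-order statement as a corollary of the weak-order statement: incomparability in $\leq_B'$ does not imply incomparability in $\leq_B$, because $\leq_B$ refines $\leq_B'$. This is precisely why the argument invokes gradedness of the strong order separately rather than reusing the weak-order antichain; both gradedness statements are already in hand, so the two applications go through without further work.
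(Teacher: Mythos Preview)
Your proposal is correct and follows essentially the same approach as the paper: both arguments invoke Theorem~\ref{setisinfinite} to produce an infinite level set of the twisted length function and then use gradedness to conclude that this level set is an antichain. The paper's proof is terser (it does not spell out the inversion-map transfer, since the identity $l_B(w^{-1})=l_B'(w)$ and its consequence for the cardinality of $\{w:l_B(w)=k\}$ were already noted just before Theorem~\ref{setisinfinite}), but the logic is identical.
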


\begin{proof}
Note that if $u\lneq'_B v$ (resp. $u\lneq_B v$) then $l_B'(v)>l_B'(u)$ (resp. $l_B(v)>l_B(u)$). Therefore the set $\{w\in \widetilde{W}|\,l_B'(w)=k\}$ (resp. $\{w\in \widetilde{W}|\,l_B(w)=k\}$) forms an infinite antichain by Theorem \ref{setisinfinite}.
\end{proof}

We remark that in \cite{antichain} it is proved that if $B$ is finite or cofinite, the $B$-twisted weak order (and consequently the $B$-twisted strong Bruhat order) on an affine Weyl group does not have an infinite antichain.

\section{An example: alcove order of $\widetilde{A}_2$}\label{examplesec}

In this section we provide a clear picture of a specific twisted (strong) Bruhat order.
Let $W$ and $\Phi$ be of type $A_2$. The two simple roots are denoted by $\alpha, \beta$.
We consider the $B$-twisted (strong) Bruhat order $(\widetilde{W}, \leq_B)$ where $B=\{\alpha,\alpha+\beta,\beta\}^{\wedge}$. Such an order is also called alcove order in \cite{quotient}. We describe all of its spherical intervals.

In the following lemma we characterize all covering relations in $(\widetilde{W}, \leq_B)$.
\begin{lemma}\label{sixtype}
%There are 6 mutually disjoint families of the elements $w\in \widetilde{W}$.
For $(\widetilde{W},\leq_B)$ one has the following results:

(1) Suppose that $w\in T$.
Then $l_B(s_{\alpha+k\delta}w)=l_B(w)-2k-1, l_B(s_{\beta+k\delta}w)=l_B(w)-2k-1, l_B(s_{\alpha+\beta+k\delta}w)=l_B(w)-4k-3.$

(2) Suppose that $w\in s_{\alpha}s_{\beta}T$.
Then $l_B(s_{\alpha+k\delta}w)=l_B(w)+4k+1, l_B(s_{\beta+k\delta}w)=l_B(w)-2k-1, l_B(s_{\alpha+\beta+k\delta}w)=l_B(w)+2k+1.$

(3) Suppose that $w\in s_{\beta}s_{\alpha}T$.
Then $l_B(s_{\alpha+k\delta}w)=l_B(w)-2k-1, l_B(s_{\alpha+\beta+k\delta}w)=l_B(w)+2k+1, l_B(s_{\beta+k\delta}w)=l_B(w)+4k+1.$

(4) Suppose that $w\in s_{\beta}s_{\alpha}s_{\beta}T$.
Then $l_B(s_{\alpha+k\delta}w)=l_B(w)+2k+1, l_B(s_{\beta+k\delta}w)=l_B(w)+2k+1, l_B(s_{\alpha+\beta+k\delta}w)=l_B(w)+4k+3.$

(5) Suppose that $w\in s_{\beta}T$.
Then $l_B(s_{\alpha+k\delta}w)=l_B(w)-4k-1, l_B(s_{\beta+k\delta}w)=l_B(w)+2k+1, l_B(s_{\alpha+\beta+k\delta}w)=l_B(w)-2k-1.$

(6) Suppose that  $w\in s_{\alpha}T$.
Then $l_B(s_{\alpha+k\delta}w)=l_B(w)+2k+1, l_B(s_{\alpha+\beta+k\delta}w)=l_B(w)-2k-1, l_B(s_{\beta+k\delta}w)=l_B(w)-4k-1.$

\end{lemma}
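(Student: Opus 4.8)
The plan is to reduce the Lemma to a single closed formula for the $B$-twisted length on $\widetilde{W}$ and then substitute. Write an arbitrary element as $w=vt_\lambda$ with $v=\pi(w)\in W$ and $t_\lambda\in T$ (so $\lambda$ lies in the coroot lattice). I claim that
\[
l_B(w)=\left(\sum_{\gamma\in\Phi^+}\gamma,\ \lambda\right)-l(v).
\]
To prove this, note that $B=\{\alpha,\alpha+\beta,\beta\}^{\wedge}=(\Phi^+)^{\wedge}$ is precisely the set of positive affine roots $\gamma+j\delta$ with $\gamma\in\Phi^+$, while $\widetilde{\Phi}^+\setminus B$ consists of those with $\gamma\in\Phi^-$; since $l(w)=|N(w^{-1})|$ this gives $l_B(w)=|N(w^{-1})\setminus B|-|N(w^{-1})\cap B|=\#\{\text{roots of }N(w^{-1})\text{ over }\Phi^-\}-\#\{\text{roots of }N(w^{-1})\text{ over }\Phi^+\}$. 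Now $N(w^{-1})=\{\eta\in\widetilde{\Phi}^+:w\eta\in\widetilde{\Phi}^-\}$, and for $\eta=\gamma+j\delta$ the definitions of $t_\lambda$ and of the action on $\widetilde\Phi$ give $w\eta=v\gamma+(j+(\gamma,\lambda))\delta$. Running through the four cases according to the signs of $\gamma$ and of $v\gamma$ (and using the explicit shape of $\widetilde{\Phi}^+$, in particular that $\{\gamma\}^{\wedge}$ starts at $n=0$ when $\gamma\in\Phi^+$ but at $n=1$ when $\gamma\in\Phi^-$), the set of $j$ with $\gamma+j\delta\in N(w^{-1})$ is an interval of consecutive integers of length $\max(0,-(\gamma,\lambda))$ or $\max(0,-(\gamma,\lambda)\pm1)$. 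Pairing the chain over $\gamma\in\Phi^+$ with the chain over $-\gamma\in\Phi^-$ and applying the identity $\max(0,x)-\max(0,-x)=x$ (to $x=(\gamma,\lambda)$ and to $x=(\gamma,\lambda)-1$), the net contribution of each $\gamma\in\Phi^+$ collapses to $(\gamma,\lambda)$ if $v\gamma\in\Phi^+$ and to $(\gamma,\lambda)-1$ if $v\gamma\in\Phi^-$; summing over $\Phi^+$ yields $(\sum_{\gamma\in\Phi^+}\gamma,\lambda)-|\{\gamma\in\Phi^+:v\gamma\in\Phi^-\}|=(\sum_{\gamma\in\Phi^+}\gamma,\lambda)-l(v)$.

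With this formula the Lemma becomes pure substitution. From $s_{\mu+k\delta}=s_\mu t_{-k\mu^\vee}$ one gets, commuting the translation past $v$, that $s_{\mu+k\delta}w=(s_\mu v)\,t_{\lambda-kv^{-1}\mu^\vee}$; hence $\pi(s_{\mu+k\delta}w)=s_\mu v$ and the displayed formula gives
\[
l_B(s_{\mu+k\delta}w)-l_B(w)=\bigl(l(v)-l(s_\mu v)\bigr)-k\left(\sum_{\gamma\in\Phi^+}\gamma,\ v^{-1}\mu^\vee\right).
\]
For type $A_2$, taking $\mu^\vee=\mu$ and $\sum_{\gamma\in\Phi^+}\gamma=2\rho$ (with $\rho=\alpha+\beta$ the highest root) as usual, one has $(\sum_{\gamma\in\Phi^+}\gamma,\ v^{-1}\mu^\vee)=2(\mu,v\rho)$, whence
\[
l_B(s_{\mu+k\delta}w)=l_B(w)+\bigl(l(v)-l(s_\mu v)\bigr)-2k\,(\mu,v\rho).
\]
It then suffices to run through the six cosets of $T$ in $\widetilde{W}$. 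As $v$ ranges over $W=\{e,s_\alpha,s_\beta,s_\alpha s_\beta,s_\beta s_\alpha,s_\beta s_\alpha s_\beta\}$ the root $v\rho$ ranges over the $W$-orbit of $\rho$, which for $A_2$ is all of $\Phi$, so $(\mu,v\rho)\in\{\pm1,\pm2\}$ and the slope $-2(\mu,v\rho)$ lies in $\{\pm2,\pm4\}$; the constant $l(v)-l(s_\mu v)$ equals $\mp1$ when $\mu\in\{\alpha,\beta\}$ (a simple reflection of $W$) and equals $2l(v)-3\in\{\pm1,\pm3\}$ when $\mu=\alpha+\beta$ (for which $s_\mu=w_0$ in $W$, so $l(s_\mu v)=l(w_0)-l(v)=3-l(v)$). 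Reading these values off in each coset reproduces exactly the six displayed families; for instance $v=e$ gives case~(1), with slopes $-2,-2,-4$ and constant terms $-1,-1,-3$ for $\mu=\alpha,\beta,\alpha+\beta$.

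The only substantive step is the inversion-set computation behind the closed formula for $l_B$, and its one delicate point is exactly the ``$\pm1$'' asymmetry between the $\delta$-chains over $\gamma$ and over $-\gamma$: it is this that makes the per-root contributions telescope to a genuinely linear function of $\lambda$ (for an arbitrary biclosed set $B$ this cancellation would fail, consistent with the twisted length being far less controlled in general). Everything after that is finite bookkeeping, and it can be halved by the diagram automorphism $\alpha\leftrightarrow\beta$ of $\widetilde{A}_2$, which fixes $B$ and interchanges cases~(2)$\leftrightarrow$(3) and (5)$\leftrightarrow$(6); so only the cosets with $v\in\{e,\,s_\alpha s_\beta,\,s_\beta,\,w_0\}$ need to be checked directly.
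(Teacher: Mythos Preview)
Your proof is correct and considerably cleaner than the paper's. The key idea that makes it work---the closed formula $l_B(vt_\lambda)=(2\rho,\lambda)-l(v)$ for $B=(\Phi^+)^\wedge$---is sound: the four-case analysis of the $\delta$-chains in $N(w^{-1})$ and the telescoping via $\max(0,x)-\max(0,-x)=x$ do exactly what you say, and the substitution step is straightforward algebra. I spot-checked several of the six cases against the stated formulas and they all come out right.

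The paper takes a much more laborious route: it writes down explicit formulae for $N(s_{\gamma+k\delta})$, then uses the cocycle identity $N(wu)=(N(w)\setminus(-wN(u)))\cup(wN(u)\setminus(-N(w)))$ to compute $N(t_{k_1\alpha+k_2\beta}\cdot x)$ for each representative $x$ of a $T$-coset, expressed as unions of six $\delta$-chains with explicit endpoints, and then reads off the twisted length case by case. Both the paper and you exploit the diagram automorphism $\alpha\leftrightarrow\beta$ to halve the casework. What your approach buys is (i) a uniform one-line difference formula $l_B(s_{\mu+k\delta}w)-l_B(w)=l(v)-l(s_\mu v)-2k(\mu,v\rho)$ that makes the six cases transparent, and (ii) an argument that visibly generalises to the alcove order $B=(\Phi^+)^\wedge$ on any affine Weyl group, not just $\widetilde A_2$. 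What the paper's approach buys is that it stays entirely at the level of explicit inversion sets, which is useful elsewhere in the section (e.g.\ Proposition~\ref{posetprop}) where those inversion-set descriptions are reused.
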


\begin{proof}
It follows from the fact $\widetilde{W}=W\ltimes T$  that every element is in one of the six subsets and these six subsets are pairwise disjoint.

One can verify by induction easily that
$$N(s_{\alpha+k\delta})=\alpha_0^{2k}(-\beta)_1^k(\alpha+\beta)_0^{k-1}, k\geq 0,$$
$$N(s_{\alpha+k\delta})=(-\alpha)_1^{-2k-1}(\beta)_0^{-k-1}(-\alpha-\beta)_1^{-k}, k<0,$$
$$N(s_{\alpha+\beta+k\delta})=(\alpha+\beta)_0^{2k}\alpha_0^k\beta_0^k, k\geq 0,$$
$$N(s_{\alpha+\beta+k\delta})=(-\alpha-\beta)_1^{-2k-1}(-\alpha)_1^{-k-1}(-\beta)_1^{-k-1}, k<0,$$
$$N(s_{\beta+k\delta})=\beta_0^{2k}(-\alpha)_1^k(\alpha+\beta)_0^{k-1}, k\geq 0,$$
$$N(s_{\beta+k\delta})=(-\beta)_1^{-2k-1}(\alpha)_0^{-k-1}(-\alpha-\beta)_1^{-k}, k<0.$$
Now by using the identity $N(wu)=(N(w)\backslash w(-N(u)))\cup (wN(u)\backslash
(-N(w)))$, we compute
$$N(t_{k_1\alpha+k_2\beta})$$
$$=(\alpha_0^{k_1-\frac{k_2}{2}-1}(-\alpha)_1^{-k_1+\frac{k_2}{2}}\beta_0^{-\frac{k_1}{2}+k_2-1}(-\beta)_1^{\frac{k_1}{2}-k_2}
(\alpha+\beta)_0^{\frac{k_1}{2}+\frac{k_2}{2}-1}(-\alpha-\beta)_1^{-\frac{k_1}{2}-\frac{k_2}{2}}).$$
$$N(t_{k_1\alpha+k_2\beta}s_{\alpha+k\delta})$$
$$=\alpha_{0}^{k_1-\frac{k_2}{2}+2k}(-\alpha)_1^{-2k-1-k_1+\frac{k_2}{2}}\beta_0^{-k-1-\frac{k_1}{2}+k_2}(-\beta)_1^{k+\frac{k_1}{2}-k_2}$$
$$(\alpha+\beta)_0^{k-1+\frac{k_1+k_2}{2}}(-\alpha-\beta)_1^{-k-\frac{k_1+k_2}{2}}.$$
$l_B(s_{\alpha+k\delta}t_{k_1\alpha+k_2\beta})=-2k-1-k_1+\frac{k_2}{2}+k+\frac{k_1}{2}-k_2-k-\frac{k_1+k_2}{2}=-2k-k_1-k_2-1.$
$l_B(t_{k_1\alpha+k_2\beta})=-k_1+\frac{k_2}{2}+\frac{k_1}{2}-k_2-\frac{k_1}{2}-\frac{k_2}{2}=-k_1-k_2$.
Hence $l_B(t_{k_1\alpha+k_2\beta})-2k-1=l_B(s_{\alpha+k\delta}t_{k_1\alpha+k_2\beta})$.

The map $\tau: s_{\alpha}\mapsto s_{\beta}, s_{\beta}\mapsto s_{\alpha}, s_{\delta-\alpha-\beta}\mapsto s_{\delta-\alpha-\beta}$ induces an automorphism of $\widetilde{W}$.
From this and the above calculation, one sees that
 $l_B(t_{k_1\alpha+k_2\beta})-2k-1=l_B(t_{k_1\alpha+k_2\beta}s_{\beta+k\delta})$.
 We compute
$$N(t_{k_1\alpha+k_2\beta}s_{\alpha+\beta+k\delta})$$
 $$=\alpha_0^{k+k_1-\frac{k_2}{2}}(-\alpha)_1^{-k-1-k_1+\frac{k_2}{2}}\beta_0^{k-\frac{k_1}{2}+k_2}(-\beta)_1^{-k-1+\frac{k_1}{2}-k_2}$$
$$(\alpha+\beta)_0^{2k+\frac{k_1+k_2}{2}}(-\alpha-\beta)_1^{-2k-1-\frac{k_1+k_2}{2}}.$$
$l_B(s_{\alpha+\beta+k\delta}t_{k_1\alpha+k_2\beta})=-k-1-k_1+\frac{k_2}{2}-k-1+\frac{k_1}{2}-k_2-2k-1-\frac{k_1+k_2}{2}=-4k-k_1-k_2-3.$
Hence $l_B(s_{\alpha+\beta+k\delta}t_{k_1\alpha+k_2\beta})=l_B(t_{k_1\alpha+k_2\beta})-4k-3.$ So we have proved (1).

We compute
$$N(t_{k_1\alpha+k_2\beta}s_{\alpha})$$
$$=(\alpha_0^{k_1-\frac{k_2}{2}}(-\alpha)_1^{-k_1+\frac{k_2}{2}-1}\beta_0^{-\frac{k_1}{2}+k_2-1}(-\beta)_1^{\frac{k_1}{2}-k_2}
(\alpha+\beta)_0^{\frac{k_1}{2}+\frac{k_2}{2}-1}(-\alpha-\beta)_1^{-\frac{k_1}{2}-\frac{k_2}{2}}).$$
$$N(t_{k_1\alpha+k_2\beta}s_{\alpha}s_{\alpha+k\delta})$$
$$=\alpha_0^{-2k-1+k_1-\frac{k_2}{2}}(-\alpha)_1^{2k-k_1+\frac{k_2}{2}}\beta_0^{k-1-\frac{k_1}{2}+k_2}(-\beta)_1^{-k+\frac{k_1}{2}-k_2}$$
$$(\alpha+\beta)_0^{-k-1+\frac{k_1+k_2}{2}}(-\alpha-\beta)_1^{k-\frac{k_1+k_2}{2}}.$$
So $l_B(s_{\alpha+k\delta}s_{\alpha}t_{k_1\alpha+k_2\beta})=2k-k_1+\frac{k_2}{2}-k+\frac{k_1}{2}-k_2+k-\frac{k_1+k_2}{2}=2k-k_1-k_2$.
$l_B(s_{\alpha}t_{k_1\alpha+k_2\beta})=-k_1+\frac{k_2}{2}-1+\frac{k_1}{2}-k_2-\frac{k_1}{2}-\frac{k_2}{2}=-k_1-k_2-1$.
Therefore
$l_B(s_{\alpha+k\delta}s_{\alpha}t_{k_1\alpha+k_2\beta})=l_B(s_{\alpha}t_{k_1\alpha+k_2\beta})+2k+1.$
$$N(t_{k_1\alpha+k_2\beta}s_{\alpha}s_{\beta+k\delta})$$
$$=\alpha_0^{k+k_1-\frac{k_2}{2}}(-\alpha)_1^{-k-1-k_1+\frac{k_2}{2}}(\beta)_0^{k-1-\frac{k_1}{2}+k_2}(-\beta)_1^{-k+\frac{k_1}{2}-k_2}$$
$$(\alpha+\beta)_0^{2k+\frac{k_1+k_2}{2}}(-\alpha-\beta)_1^{-2k-1-\frac{k_1+k_2}{2}}.$$
So $l_B(s_{\beta+k\delta}s_{\alpha}t_{k_1\alpha+k_2\beta})=-k-1-k_1+\frac{k_2}{2}-k+\frac{k_1}{2}-k_2-2k-1-\frac{k_1+k_2}{2}=-4k-2-k_1-k_2=l_B(s_{\alpha}t_{k_1\alpha+k_2\beta})-4k-1.$
$$N(t_{k_1\alpha+k_2\beta}s_{\alpha}s_{\alpha+\beta+k\delta})$$
$$=\alpha_0^{-k-1+k_1-\frac{k_2}{2}}(-\alpha)_1^{k-k_1+\frac{k_2}{2}}\beta_0^{2k-\frac{k_1}{2}+k_2}(-\beta)_1^{-2k-1+\frac{k_1}{2}-k_2}$$
$$(\alpha+\beta)_0^{k+\frac{k_1+k_2}{2}}(-\alpha-\beta)_1^{-k-1-\frac{k_1+k_2}{2}}.$$
So $l_B(s_{\alpha+\beta+k\delta}s_{\alpha}t_{k_1\alpha+k_2\beta})=k-k_1+\frac{k_2}{2}-2k-1+\frac{k_1}{2}-k_2-k-1-\frac{k_1+k_2}{2}=-2k-k_1-k_2-2=l_B(s_{\alpha}t_{k_1\alpha+k_2\beta})-2k-1.$
So we have proved (6).

Using the map $\tau$ and the calculation for the case (6), one sees  (5).

Now we prove (3). We compute
$$N(t_{k_1\alpha+k_2\beta}s_{\alpha}s_{\beta})$$
$$=(\alpha_0^{k_1-\frac{k_2}{2}}(-\alpha)_1^{-k_1+\frac{k_2}{2}-1}\beta_0^{-\frac{k_1}{2}+k_2-1}(-\beta)_1^{\frac{k_1}{2}-k_2}
(\alpha+\beta)_0^{\frac{k_1}{2}+\frac{k_2}{2}}(-\alpha-\beta)_1^{-\frac{k_1}{2}-\frac{k_2}{2}-1}).$$
$$N(t_{k_1\alpha+k_2\beta}s_{\alpha}s_{\beta}s_{\alpha+k\delta})$$
$$=\alpha_0^{-k+k_1-\frac{k_2}{2}}(-\alpha)_1^{k-1-k_1+\frac{k_2}{2}}\beta_{0}^{2k-\frac{k_1}{2}+k_2}(-\beta)_1^{-2k-1+\frac{k_1}{2}-k_2}$$
$$(\alpha+\beta)_0^{k+\frac{k_1+k_2}{2}}(-\alpha-\beta)_1^{-k-1-\frac{k_1+k_2}{2}}.$$
So $l_B(s_{\alpha+k\delta}s_{\beta}s_{\alpha}t_{k_1\alpha+k_2\beta})=k-1-k_1+\frac{k_2}{2}-2k-1+\frac{k_1}{2}-k_2-k-1-\frac{k_1+k_2}{2}=-2k-k_1-k_2-3.$
$l_B(s_{\beta}s_{\alpha}t_{k_1\alpha+k_2\beta})=-k_1+\frac{k_2}{2}-1+\frac{k_1}{2}-k_2-\frac{k_1}{2}-\frac{k_2}{2}-1=-k_1-k_2-2$. Therefore
$l_B(s_{\alpha+k\delta}s_{\beta}s_{\alpha}t_{k_1\alpha+k_2\beta})=l_B(s_{\beta}s_{\alpha}t_{k_1\alpha+k_2\beta})-2k-1.$

We compute
$$N(t_{k_1\alpha+k_2\beta}s_{\alpha}s_{\beta}s_{\beta+k\delta})$$
$$=\alpha_0^{-k+k_1-\frac{k_2}{2}}(-\alpha)_1^{k-1-k_1+\frac{k_2}{2}}\beta_0^{-k-1-\frac{k_1}{2}+k_2}(-\beta)_1^{k+\frac{k_1}{2}-k_2}$$
$$(\alpha+\beta)_0^{-2k-1+\frac{k_1+k_2}{2}}(-\alpha-\beta)_1^{2k-\frac{k_1+k_2}{2}}.$$
So $l_B(s_{\beta+k\delta}s_{\beta}s_{\alpha}t_{k_1\alpha+k_2\beta})=k-1-k_1+\frac{k_2}{2}+k+\frac{k_1}{2}-k_2+2k-\frac{k_1+k_2}{2}=4k-k_1-k_2-1.$
Therefore $l_B(s_{\beta+k\delta}s_{\beta}s_{\alpha}t_{k_1\alpha+k_2\beta})=l_B(s_{\beta}s_{\alpha}t_{k_1\alpha+k_2\beta})+4k+1$.

We compute
$$N(t_{k_1\alpha+k_2\beta}s_{\alpha}s_{\beta}s_{\alpha+\beta+k\delta})$$
$$=\alpha_0^{-2k-1+k_1-\frac{k_2}{2}}(-\alpha)_1^{2k-k_1+\frac{k_2}{2}}\beta_0^{k-\frac{k_1}{2}+k_2}(-\beta)_1^{-k-1+\frac{k_1}{2}-k_2}$$
$$(\alpha+\beta)_0^{-k-1+\frac{k_1+k_2}{2}}(-\alpha-\beta)_1^{k-\frac{k_1+k_2}{2}}.$$
So $l_B(s_{\alpha+\beta+k\delta}s_{\beta}s_{\alpha}t_{k_1\alpha+k_2\beta})=2k-k_1+\frac{k_2}{2}-k-1+\frac{k_1}{2}-k_2+k-\frac{k_1+k_2}{2}=2k-k_1-k_2-1.$
Therefore $l_B(s_{\alpha+\beta+k\delta}s_{\beta}s_{\alpha}t_{k_1\alpha+k_2\beta})=l_B(s_{\beta}s_{\alpha}t_{k_1\alpha+k_2\beta})+2k+1$.

Using the map $\tau$ and the calculation for the $s_{\beta}s_{\alpha}T$ case, one sees  (2).

Now we prove (4). We compute
$$N(t_{k_1\alpha+k_2\beta}s_{\alpha}s_{\beta}s_{\alpha})$$
$$=(\alpha_0^{k_1-\frac{k_2}{2}}(-\alpha)_1^{-k_1+\frac{k_2}{2}-1}\beta_0^{-\frac{k_1}{2}+k_2}(-\beta)_1^{\frac{k_1}{2}-k_2-1}
(\alpha+\beta)_0^{\frac{k_1}{2}+\frac{k_2}{2}}(-\alpha-\beta)_1^{-\frac{k_1}{2}-\frac{k_2}{2}-1}).$$
$$N(t_{k_1\alpha+k_2\beta}s_{\alpha}s_{\beta}s_{\alpha}s_{\alpha+k\delta})$$
$$=\alpha_0^{k+k_1-\frac{k_2}{2}}(-\alpha)_1^{-k-1-k_1+\frac{k_2}{2}}\beta_0^{-2k-1-\frac{k_1}{2}+k_2}(-\beta)_1^{2k+\frac{k_1}{2}-k_2}$$
$$(\alpha+\beta)_0^{-k+\frac{k_1+k_2}{2}}(-\alpha-\beta)_1^{k-1-\frac{k_1+k_2}{2}}.$$
So $l_B(s_{\alpha+k\delta}s_{\alpha}s_{\beta}s_{\alpha}t_{k_1\alpha+k_2\beta})=-k-1-k_1+\frac{k_2}{2}+2k+\frac{k_1}{2}-k_2+k-1-\frac{k_1+k_2}{2}=2k-2-k_1-k_2.$
$l_B(s_{\alpha}s_{\beta}s_{\alpha}t_{k_1\alpha+k_2\beta})=-k_1+\frac{k_2}{2}-1+\frac{k_1}{2}-k_2-1-\frac{k_1}{2}-\frac{k_2}{2}-1=-k_1-k_2-3.$
Therefore $l_B(s_{\alpha+k\delta}s_{\alpha}s_{\beta}s_{\alpha}t_{k_1\alpha+k_2\beta})=l_B(s_{\alpha}s_{\beta}s_{\alpha}t_{k_1\alpha+k_2\beta})+2k+1.$

Using the map $\tau$ and the above calculation, one sees that $l_B(s_{\beta+k\delta}s_{\alpha}s_{\beta}s_{\alpha}t_{k_1\alpha+k_2\beta})=l_B(s_{\alpha}s_{\beta}s_{\alpha}t_{k_1\alpha+k_2\beta})+2k+1.$

We compute
$$N(t_{k_1\alpha+k_2\beta}s_{\alpha}s_{\beta}s_{\alpha}s_{\alpha+\beta+k\delta})$$
$$=\alpha_0^{-k-1+k_1-\frac{k_2}{2}}(-\alpha)_1^{k-k_1+\frac{k_2}{2}}\beta_0^{-k-1-\frac{k_1}{2}+k_2}(-\beta)_1^{k+\frac{k_1}{2}-k_2}$$
$$(\alpha+\beta)_0^{-2k-1+\frac{k_1+k_2}{2}}(-\alpha-\beta)_1^{2k-\frac{k_1+k_2}{2}}.$$
So $l_B(s_{\alpha+\beta+k\delta}s_{\alpha}s_{\beta}s_{\alpha}t_{k_1\alpha+k_2\beta})=k-k_1+\frac{k_2}{2}+k+\frac{k_1}{2}-k_2+2k-\frac{k_1+k_2}{2}=4k-k_1-k_2.$
Therefore $l_B(s_{\alpha+\beta+k\delta}s_{\alpha}s_{\beta}s_{\alpha}t_{k_1\alpha+k_2\beta})=l_B(s_{\alpha}s_{\beta}s_{\alpha}t_{k_1\alpha+k_2\beta})+4k+3.$
\end{proof}

We draw below part of the Hasse diagram of the poset $(\widetilde{W},\leq_B)$. The numeric sequence $a_1a_2\cdots a_t, a_i\in \{1,2,3\}$ stands for the reduced expression $s_{a_1}s_{a_2}\cdots s_{a_t}$ where $s_1:=s_{\alpha}, s_2:=s_{\beta}$ and $s_3:=s_{\delta-\alpha-\beta}.$ The black edge corresponds to a covering relation in the corresponding $B$-twisted left weak Bruhat order. The blue edge corresponds to the additional covering relation in $B$-twisted strong Bruhat order.

\tikzset{every picture/.style={line width=0.75pt}} %set default line width to 0.75pt

\begin{tikzpicture}[x=0.75pt,y=0.75pt,yscale=-1,xscale=1]
%uncomment if require: \path (0,300); %set diagram left start at 0, and has height of 300

%Shape: Regular Polygon [id:dp973283343978361]
\draw   (125,108) -- (146.65,120.5) -- (146.65,145.5) -- (125,158) -- (103.35,145.5) -- (103.35,120.5) -- cycle ;
%Shape: Regular Polygon [id:dp7197104387894302]
\draw   (147,71) -- (168.65,83.5) -- (168.65,108.5) -- (147,121) -- (125.35,108.5) -- (125.35,83.5) -- cycle ;
%Shape: Regular Polygon [id:dp15903111499275635]
\draw   (147,71) -- (168.65,83.5) -- (168.65,108.5) -- (147,121) -- (125.35,108.5) -- (125.35,83.5) -- cycle ;
%Shape: Regular Polygon [id:dp9189122044593416]
\draw   (147,71) -- (168.65,83.5) -- (168.65,108.5) -- (147,121) -- (125.35,108.5) -- (125.35,83.5) -- cycle ;
%Shape: Regular Polygon [id:dp6072872131070011]
\draw   (147,71) -- (168.65,83.5) -- (168.65,108.5) -- (147,121) -- (125.35,108.5) -- (125.35,83.5) -- cycle ;
%Shape: Regular Polygon [id:dp939490146333694]
\draw   (168.65,108.5) -- (190.3,121) -- (190.3,146) -- (168.65,158.5) -- (147,146) -- (147,121) -- cycle ;
%Shape: Regular Polygon [id:dp672699394777782]
\draw   (103.35,70.5) -- (125,83) -- (125,108) -- (103.35,120.5) -- (81.7,108) -- (81.7,83) -- cycle ;
%Shape: Regular Polygon [id:dp5953795172180747]
\draw   (190.3,71) -- (211.95,83.5) -- (211.95,108.5) -- (190.3,121) -- (168.65,108.5) -- (168.65,83.5) -- cycle ;
%Shape: Regular Polygon [id:dp24339752277103566]
\draw   (211.95,108.5) -- (233.6,121) -- (233.6,146) -- (211.95,158.5) -- (190.3,146) -- (190.3,121) -- cycle ;
%Shape: Regular Polygon [id:dp42949382892887167]
\draw   (81.7,108) -- (103.35,120.5) -- (103.35,145.5) -- (81.7,158) -- (60.05,145.5) -- (60.05,120.5) -- cycle ;
%Shape: Regular Polygon [id:dp41222013882129205]
\draw   (233.6,71) -- (255.25,83.5) -- (255.25,108.5) -- (233.6,121) -- (211.95,108.5) -- (211.95,83.5) -- cycle ;
%Shape: Regular Polygon [id:dp13958374743659951]
\draw   (255.25,108.5) -- (276.9,121) -- (276.9,146) -- (255.25,158.5) -- (233.6,146) -- (233.6,121) -- cycle ;
%Shape: Regular Polygon [id:dp8857845679103407]
\draw   (60.05,70.5) -- (81.7,83) -- (81.7,108) -- (60.05,120.5) -- (38.4,108) -- (38.4,83) -- cycle ;
%Shape: Regular Polygon [id:dp35672491997685474]
\draw   (233.6,146) -- (255.25,158.5) -- (255.25,183.5) -- (233.6,196) -- (211.95,183.5) -- (211.95,158.5) -- cycle ;
%Shape: Regular Polygon [id:dp8906231840622911]
\draw   (190.3,146) -- (211.95,158.5) -- (211.95,183.5) -- (190.3,196) -- (168.65,183.5) -- (168.65,158.5) -- cycle ;
%Shape: Regular Polygon [id:dp3940413586618656]
\draw   (146.65,145.5) -- (168.3,158) -- (168.3,183) -- (146.65,195.5) -- (125,183) -- (125,158) -- cycle ;
%Shape: Regular Polygon [id:dp5869728959239826]
\draw   (103.35,145.5) -- (125,158) -- (125,183) -- (103.35,195.5) -- (81.7,183) -- (81.7,158) -- cycle ;
%Shape: Regular Polygon [id:dp9248773097884422]
\draw   (60.05,145.5) -- (81.7,158) -- (81.7,183) -- (60.05,195.5) -- (38.4,183) -- (38.4,158) -- cycle ;
%Shape: Regular Polygon [id:dp94069370882748]
\draw   (276.9,71) -- (298.55,83.5) -- (298.55,108.5) -- (276.9,121) -- (255.25,108.5) -- (255.25,83.5) -- cycle ;
%Shape: Regular Polygon [id:dp13085730870362866]
\draw   (276.9,146) -- (298.55,158.5) -- (298.55,183.5) -- (276.9,196) -- (255.25,183.5) -- (255.25,158.5) -- cycle ;
%Straight Lines [id:da5991759463798567]
\draw [color={rgb, 255:red, 74; green, 144; blue, 226 }  ,draw opacity=1 ]   (168.65,158.5) -- (211.95,183.5) ;

%Straight Lines [id:da9808773427003095]
\draw [color={rgb, 255:red, 74; green, 144; blue, 226 }  ,draw opacity=1 ]   (125.35,158.5) -- (168.65,183.5) ;

%Straight Lines [id:da9362138300834315]
\draw [color={rgb, 255:red, 74; green, 144; blue, 226 }  ,draw opacity=1 ]   (103.7,121) -- (147,146) ;

%Straight Lines [id:da3227119117745483]
\draw [color={rgb, 255:red, 74; green, 144; blue, 226 }  ,draw opacity=1 ]   (81.7,158) -- (125,183) ;

%Straight Lines [id:da9459483472803778]
\draw [color={rgb, 255:red, 74; green, 144; blue, 226 }  ,draw opacity=1 ]   (38.4,158) -- (81.7,183) ;

%Straight Lines [id:da13891091705995562]
\draw [color={rgb, 255:red, 74; green, 144; blue, 226 }  ,draw opacity=1 ]   (211.95,158.5) -- (255.25,183.5) ;

%Straight Lines [id:da5818614821779018]
\draw [color={rgb, 255:red, 74; green, 144; blue, 226 }  ,draw opacity=1 ]   (255.25,158.5) -- (298.55,183.5) ;

%Straight Lines [id:da33298160353785167]
\draw [color={rgb, 255:red, 74; green, 144; blue, 226 }  ,draw opacity=1 ]   (60.05,120.5) -- (103.35,145.5) ;

%Straight Lines [id:da8565391537531581]
\draw [color={rgb, 255:red, 74; green, 144; blue, 226 }  ,draw opacity=1 ]   (147,121) -- (190.3,146) ;

%Straight Lines [id:da21245368319441615]
\draw [color={rgb, 255:red, 74; green, 144; blue, 226 }  ,draw opacity=1 ]   (190.3,121) -- (233.6,146) ;

%Straight Lines [id:da2060919036255553]
\draw [color={rgb, 255:red, 74; green, 144; blue, 226 }  ,draw opacity=1 ]   (233.6,121) -- (276.9,146) ;

%Straight Lines [id:da6854841495419599]
\draw [color={rgb, 255:red, 74; green, 144; blue, 226 }  ,draw opacity=1 ]   (211.95,83.5) -- (255.25,108.5) ;

%Straight Lines [id:da43108899835596093]
\draw [color={rgb, 255:red, 74; green, 144; blue, 226 }  ,draw opacity=1 ]   (255.25,83.5) -- (298.55,108.5) ;

%Straight Lines [id:da7571495602188103]
\draw [color={rgb, 255:red, 74; green, 144; blue, 226 }  ,draw opacity=1 ]   (168.65,83.5) -- (211.95,108.5) ;

%Straight Lines [id:da10884224016145527]
\draw [color={rgb, 255:red, 74; green, 144; blue, 226 }  ,draw opacity=1 ]   (125.35,83.5) -- (168.65,108.5) ;

%Straight Lines [id:da5248708794744847]
\draw [color={rgb, 255:red, 74; green, 144; blue, 226 }  ,draw opacity=1 ]   (82.05,83.5) -- (125.35,108.5) ;

%Straight Lines [id:da9807047394784258]
\draw [color={rgb, 255:red, 74; green, 144; blue, 226 }  ,draw opacity=1 ]   (38.4,83) -- (81.7,108) ;

%Straight Lines [id:da9212096222450576]
\draw [color={rgb, 255:red, 74; green, 144; blue, 226 }  ,draw opacity=1 ]   (298.55,158.5) -- (255.25,183.5) ;

%Straight Lines [id:da8487967990812246]
\draw [color={rgb, 255:red, 74; green, 144; blue, 226 }  ,draw opacity=1 ]   (211.95,183.5) -- (255.25,158.5) ;

%Straight Lines [id:da00770845699535716]
\draw [color={rgb, 255:red, 74; green, 144; blue, 226 }  ,draw opacity=1 ]   (168.65,183.5) -- (211.95,158.5) ;

%Straight Lines [id:da3303252080439174]
\draw [color={rgb, 255:red, 74; green, 144; blue, 226 }  ,draw opacity=1 ]   (125.35,183.5) -- (168.65,158.5) ;

%Straight Lines [id:da9091868214993943]
\draw [color={rgb, 255:red, 74; green, 144; blue, 226 }  ,draw opacity=1 ]   (82.05,183.5) -- (125.35,158.5) ;

%Straight Lines [id:da6761442057158853]
\draw [color={rgb, 255:red, 74; green, 144; blue, 226 }  ,draw opacity=1 ]   (38.4,183) -- (81.7,158) ;

%Straight Lines [id:da41346004519216883]
\draw [color={rgb, 255:red, 74; green, 144; blue, 226 }  ,draw opacity=1 ]   (233.6,146) -- (276.9,121) ;

%Straight Lines [id:da619285920050177]
\draw [color={rgb, 255:red, 74; green, 144; blue, 226 }  ,draw opacity=1 ]   (190.3,146) -- (233.6,121) ;

%Straight Lines [id:da9102960231363681]
\draw [color={rgb, 255:red, 74; green, 144; blue, 226 }  ,draw opacity=1 ]   (147,146) -- (190.3,121) ;

%Straight Lines [id:da7072115912951622]
\draw [color={rgb, 255:red, 74; green, 144; blue, 226 }  ,draw opacity=1 ]   (103.7,146) -- (147,121) ;

%Straight Lines [id:da28217378760164613]
\draw [color={rgb, 255:red, 74; green, 144; blue, 226 }  ,draw opacity=1 ]   (60.4,146) -- (103.7,121) ;

%Straight Lines [id:da10127681575178515]
\draw [color={rgb, 255:red, 74; green, 144; blue, 226 }  ,draw opacity=1 ]   (211.95,108.5) -- (255.25,83.5) ;

%Straight Lines [id:da24862875070970047]
\draw [color={rgb, 255:red, 74; green, 144; blue, 226 }  ,draw opacity=1 ]   (168.65,108.5) -- (211.95,83.5) ;

%Straight Lines [id:da26069815873279834]
\draw [color={rgb, 255:red, 74; green, 144; blue, 226 }  ,draw opacity=1 ]   (125.35,108.5) -- (168.65,83.5) ;

%Straight Lines [id:da2757232764086963]
\draw [color={rgb, 255:red, 74; green, 144; blue, 226 }  ,draw opacity=1 ]   (81.7,108) -- (125,83) ;

%Straight Lines [id:da3782649749625684]
\draw [color={rgb, 255:red, 74; green, 144; blue, 226 }  ,draw opacity=1 ]   (38.75,108.5) -- (82.05,83.5) ;

%Straight Lines [id:da48551006549977815]
\draw [color={rgb, 255:red, 74; green, 144; blue, 226 }  ,draw opacity=1 ]   (255.25,108.5) -- (298.55,83.5) ;

% Text Node
\draw (82,176.5) node  [align=left] {{\tiny 2132}};
% Text Node
\draw (58.5,199) node  [align=left] {{\tiny 32132}};
% Text Node
\draw (99.2,199.8) node  [align=left] {{\tiny 132}};
% Text Node
\draw (124.8,176) node  [align=left] {{\tiny 32}};
% Text Node
\draw (148,199.8) node  [align=left] {{\tiny 2}};
% Text Node
\draw (166.8,175.8) node  [align=left] {{\tiny e}};
% Text Node
\draw (191.6,198.8) node  [align=left] {{\tiny 1}};
% Text Node
\draw (211.2,177.2) node  [align=left] {{\tiny 31}};
% Text Node
\draw (166,158.8) node  [align=left] {{\tiny 3}};
% Text Node
\draw (189.2,150.6) node  [align=left] {{\tiny 13}};
% Text Node
\draw (204.4,161) node  [align=left] {{\tiny 131}};
% Text Node
\draw (146.65,145.5) node  [align=left] {{\tiny 23}};
% Text Node
\draw (126.4,154) node  [align=left] {{\tiny 232}};
% Text Node
\draw (184,123.6) node  [align=left] {{\tiny 213}};
% Text Node
\draw (149.6,124.8) node  [align=left] {{\tiny 123}};
% Text Node
\draw (168.65,108.5) node  [align=left] {{\tiny 1213}};
% Text Node
\draw (226.8,142.6) node  [align=left] {{\tiny 2131}};
% Text Node
\draw (225.2,124.2) node  [align=left] {{\tiny 32131}};
% Text Node
\draw (206.4,107.2) node  [align=left] {{\tiny 3213}};
% Text Node
\draw (105,143.5) node  [align=left] {{\tiny 1232}};
% Text Node
\draw (78.5,158) node  [align=left] {{\tiny 12132}};
% Text Node
\draw (127.5,112) node  [align=left] {{\tiny 3123}};
% Text Node
\draw (111,124.5) node  [align=left] {{\tiny 13123}};
% Text Node
\draw (239.5,196.5) node  [align=left] {{\tiny 231}};
% Text Node
\draw (79,113.5) node  [align=left] {{\tiny 213123}};
% Text Node
\draw (249,179) node  [align=left] {{\tiny 1231}};
% Text Node
\draw (247,159) node  [align=left] {{\tiny 21231}};

\end{tikzpicture}

We collect in the following proposition some properties and the symmetry of this poset.

\begin{proposition}\label{posetprop}
(1) The sphericity of the intervals is characterized  as follow.

(a) The length 2 intervals of $(\widetilde{W},\leq_B)$ are either of the form $[x,s_1s_2x]$ where $s_1\neq s_2, s_1,s_2\in \{s_{\alpha},s_{\beta},s_{\delta-\alpha-\beta}\}$ or of the form $[x,s_{\delta-\gamma}s_{\gamma}x]$ where $\gamma\in \{\alpha,\beta,\alpha+\beta, \delta-\alpha, \delta-\beta, \delta-\alpha-\beta\}$. The former is spherical and the latter is non-spherical.

(b) The length 3 intervals of $(\widetilde{W},\leq_B)$ are all non-spherical except those of the following types:
$[x,s_{\alpha}s_{\beta}s_{\alpha}x]$ where $x\in s_{\alpha}s_{\beta}s_{\alpha}T$; $[x, s_{\delta-\alpha-\beta}s_{\beta}s_{\delta-\alpha-\beta}x]$ where $x\in s_{\beta}T$; $[x, s_{\alpha}s_{\delta-\alpha-\beta}s_{\alpha}x]$ where $x\in s_{\alpha}T$.

(c) All intervals of $(\widetilde{W},\leq_B)$ of length greater than 3 are non-spherical.

(2) Let $x=(s_{\beta}s_{\delta-\alpha-\beta}s_{\alpha})^{\infty}$ and $y=(s_{\alpha}s_{\delta-\alpha-\beta}s_{\beta})^{\infty}$ be two infinite reduced words. For $i\geq 0$, denote $x(i)$ and $x(-i)$ the unique length $i$ prefixes of $x$ and $y$ respectively.

Let $U$ be the (infinite dihedral) reflection subgroup of $\widetilde{W}$ generated by $v:=s_{\delta-\alpha-\beta}$ and $u:=s_{\alpha}s_{\beta}s_{\alpha}$. For $x\in \widetilde{W}$,  $y$ is the unique minimal element of the left coset $xU$ if and only if $y=w(i)^{-1}$ for some  $i\in \mathbb{Z}$. Consequently every element $w\in \widetilde{W}$ can be written in the form $w(i)^{-1}u$ for some $u\in U$ and $i\in \mathbb{Z}$.

Furthermore for every $w\in \widetilde{W}$, $l_B(w)$ is determined as follow:

(a) $l_B(w(i)^{-1}u(vu)^k)=-4k-3$ if $i$ is even and $l_B(w(i)^{-1}u(vu)^k)=-4k-2$ if $i$ is odd.

(b) $l_B(w(i)^{-1}(vu)^k)=-4k$ if $i$ is even and $l_B(w(i)^{-1}(vu)^k)=-4k-1$ if $i$ is odd.

(c) $l_B(w(i)^{-1}v(uv)^k)=4k+1$ if $i$ is even and $l_B(w(i)^{-1}v(uv)^k)=4k+2$ if $i$ is odd.

(d) $l_B(w(i)^{-1}(uv)^k)=4k$ if $i$ is even and $l_B(w(i)^{-1}(uv)^k)=4k-1$ if $i$ is odd.

(3) Suppose that $x,y\in \widetilde{W}$ such that $l_B(x)$ and $l_B(y)$ are of the same parity. Then there exists a poset automorphism $\gamma: (\widetilde{W},\leq_B)\rightarrow (\widetilde{W},\leq_B)$ such that $\gamma(x)=y.$
\end{proposition}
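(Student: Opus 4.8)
The plan is to construct, for the given $x$ and $y$, a poset automorphism of $(\widetilde W,\leq_B)$ carrying $x$ to $y$, by combining a few explicit automorphisms with the combinatorial model of part~(2). The first ingredients are two families of automorphisms. Since $I_B=\Phi^+=P(\Phi^+,\emptyset,\emptyset)$ and $B=P(\Phi^+,\emptyset,\emptyset)^{\wedge}$ is the unique biclosed set $B'$ with $I_{B'}=\Phi^+$ (Section~\ref{biclosedaffine}), and translation by $t_v$ only shifts the $\delta$-chains of $B$, one gets $I_{t_v\cdot B}=\Phi^+$ and hence $t_v\cdot B=B$ for every $v$ in the coroot lattice; by the standard intertwining $w\mapsto gw\colon(\widetilde W,\leq_B)\to(\widetilde W,\leq_{g\cdot B})$ (the strong-order counterpart of Lemma~\ref{dotactioniso}), left translation $L_v\colon w\mapsto t_v w$ is therefore a poset automorphism. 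As a grading isomorphism of a poset with no maximal or minimal element (Lemma~\ref{nomaxmintwo}), $L_v$ shifts $l_B$ by the constant $l_B(t_v)$, which is even (because $l(t_v)$ is), and as $v$ varies these shifts realize every even integer. Secondly, the diagram automorphism $\tau$ interchanging $s_\alpha$ and $s_\beta$ and fixing $s_{\delta-\alpha-\beta}$ fixes $\Phi^+$, hence fixes $B$, so $w\mapsto\tau(w)$ is an $l_B$-preserving poset automorphism; on the model of part~(2) it sends column $i$ to column $-i$.

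Next I would exploit part~(2) itself. Writing $w=w(i)^{-1}u$ with $i\in\mathbb Z$ and $u\in U$, formulas (2)(a)--(d) show that $l_B(w)$ depends on $i$ only through its parity, that within a single column $\{w(i)^{-1}u:u\in U\}$ each attained value of $l_B$ occurs exactly once, and that the set of values attained on even columns is disjoint from that attained on odd columns. The plan is then to produce a further supply of $l_B$-preserving automorphisms which, on this model, are ``translations of the staircase'' — reindexing the prefix $i$ together with a controlled adjustment of $u$ — and to verify by a case analysis against Lemma~\ref{sixtype} that they permute covering relations; together with the $L_v$ and $\tau$, these should act transitively on the set of columns of each fixed parity of $i$.

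The proof then finishes quickly. Given $x,y$ with $l_B(x)\equiv l_B(y)\pmod 2$, apply an $L_v$ with $l_B(t_v)=l_B(y)-l_B(x)$ (an even integer, so such $v$ exists) to replace $x$ by $x'$ with $l_B(x')=l_B(y)$; by the disjointness just noted, $x'$ and $y$ then automatically lie in columns of the same parity, and a transitivity-realizing composite of staircase translations, $L_v$'s and $\tau$ carries the column of $x'$ onto the column of $y$ while preserving $l_B$, hence carries $x'$ — the unique element of its column with twisted length $l_B(y)$ — to $y$, the unique such element of its column. The composite is the desired automorphism.

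The main obstacle is the construction and verification of the staircase automorphisms: one must show that the covering structure of $(\widetilde W,\leq_B)$, including the extra strong-order covers beyond the twisted weak order (the blue edges), is genuinely periodic under reindexing the prefix, which amounts to running through the six cases of Lemma~\ref{sixtype} while tracking how the decomposition $w=w(i)^{-1}u$ transforms under left multiplication by the reflections $s_{\alpha+k\delta}$, $s_{\beta+k\delta}$ and $s_{\alpha+\beta+k\delta}$. A second, smaller point is the transitivity claim itself: here one uses that $\pi(w(i))$ is periodic in $i$ with period dividing $6$ and that $U\cap T$ is of rank one, so that each $L_v$-orbit of columns is a single residue class modulo $3$, which $\tau$ and the staircase translations then knit together. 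It is also this evenness of all the $l_B$-shifts that explains why the parity hypothesis on $l_B(x)$ and $l_B(y)$ is needed.
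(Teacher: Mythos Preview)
Your ingredients are sound: since $B=(\Phi^+)^{\wedge}$ is the unique biclosed set with $I_B=\Phi^+$, left translation $L_v$ really is a poset automorphism shifting $l_B$ by the even integer $l_B(t_v)$, and $\tau$ is an $l_B$-preserving automorphism flipping column $i$ to column $-i$ while fixing the $U$-coordinate. The problem is the part you flag yourself: you never actually construct the ``staircase automorphisms'', and the rest of your toolkit is not enough. Concretely, the only $t_v$ with $N(t_v^{-1})\cap\Phi_U^+=\emptyset$ (i.e.\ with $t_v$ itself a minimal coset representative) are the $t_{k(\alpha-\beta)}$, and one checks $t_{\alpha-\beta}=w(6)^{-1}$; so the column shifts you can realize cleanly from the $L_v$'s are multiples of~$6$, and together with $\tau$ this does not act transitively on even (or odd) columns. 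For a general $v$ the map $L_v$ mixes a column shift with a nontrivial $U$-shift, and disentangling these is exactly the work you have postponed.

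The paper closes this gap differently and more directly: it uses the \emph{three}-cycle diagram automorphism $\sigma\colon s_{\delta-\alpha-\beta}\mapsto s_\alpha\mapsto s_\beta\mapsto s_{\delta-\alpha-\beta}$ rather than your involution $\tau$, and sets $\eta(w)=\sigma(w)\,s_\alpha s_\beta$. A check against the six coset types of Lemma~\ref{sixtype} shows $\eta$ permutes covering relations, so it is a poset automorphism; on the model of part~(2) one finds $\eta\bigl(w(i)^{-1}z\bigr)=w(i+1)^{-1}z'$ with an explicit $z'\in U$, and $l_B$ drops by exactly~$2$. Thus $\eta$ is a single automorphism that shifts the column index by~$1$, and powers of $\eta$ already give transitivity on columns of a fixed parity together with the required $l_B$-shift---no separate translation step is needed. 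Your $L_v$'s are a reasonable alternative source of $l_B$-shifts, but the decisive idea you are missing is this $\sigma$-twisted right multiplication.
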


\begin{proof}
(1) This can be verified routinely by using the covering relations in Lemma \ref{sixtype}.

(2) By \cite{DyerReflSubgrp} $y$ is an element that satisfies $N(y^{-1})\cap\Phi_{U}^+=\emptyset.$ Hence $N(y^{-1})\subset \{\alpha,-\beta\}^{\wedge}\uplus \{-\alpha,\beta\}^{\wedge}$. Note that $\{\alpha,-\beta\}^{\wedge}=(s_{\beta}s_{\delta-\alpha-\beta}s_{\alpha})^{\infty}$ and $ \{-\alpha,\beta\}^{\wedge}=(s_{\alpha}s_{\delta-\alpha-\beta}s_{\beta})^{\infty}$.

Assume that $N(y^{-1})\not\subset N(s_{\beta}s_{\delta-\alpha-\beta}s_{\alpha})^{\infty})$ and $N(y^{-1})\not\subset N((s_{\alpha}s_{\delta-\alpha-\beta}s_{\beta})^{\infty})$. Without loss of generality assume that $\alpha\in N(y^{-1})$.
Then $\{-\alpha\}^{\wedge}\cap N(y^{-1})=\emptyset$. This forces some $\beta+k\delta\in N(y^{-1})$. Then $\alpha+\beta+k\delta\in N(y^{-1})$, contradicting $N(y^{-1})\cap\Phi_{U}^+=\emptyset.$ Hence either $N(y^{-1})\subset N(s_{\beta}s_{\delta-\alpha-\beta}s_{\alpha})^{\infty})$ or $N(y^{-1})\subset N((s_{\alpha}s_{\delta-\alpha-\beta}s_{\beta})^{\infty})$.

Hence given $w\in \widetilde{W}$, then $w(i)^{-1}=wu$ for some $i\in \mathbb{Z}$ and $u\in U.$

Now we check the results on twisted length. We prove (a) and (b)-(d) can be verified similarly. One computes that

$N((uv)^kuw(i))=\alpha_0^{k-\lceil\frac{i}{2}\rceil}\beta_0^{k+\lfloor\frac{i}{2}\rfloor}(\alpha+\beta)_0^{2k}(-\alpha)_1^{\lceil\frac{i}{2}\rceil-k-1}(-\beta)_1^{\lceil-\frac{i}{2}\rceil-k-1}.$

There are six possiblities depending on $k$ and $i$.

(i) $N((uv)^kuw(i))=\alpha_0^{k-\lceil\frac{i}{2}\rceil}\beta_0^{k+\lfloor\frac{i}{2}\rfloor}(\alpha+\beta)_0^{2k}.$ Then $l_B((uv)^kuw(i))=-4k-3+\lceil\frac{i}{2}\rceil-\lfloor\frac{i}{2}\rfloor$. If $i$ is even the twisted length is $-4k-3$. If $i$ is odd, the twisted length is $-4k-2.$

(ii) $N((uv)^kuw(i))=\alpha_0^{k-\lceil\frac{i}{2}\rceil}(\alpha+\beta)_0^{2k}(-\beta)_1^{\lceil-\frac{i}{2}\rceil-k-1}.$ Then $l_B((uv)^kuw(i))=-4k-3+\lceil\frac{i}{2}\rceil+\lceil-\frac{i}{2}\rceil$. If $i$ is even the twisted length is $-4k-3$. If $i$ is odd, the twisted length is $-4k-2.$

(iii) $N((uv)^kuw(i))=\beta_0^{k+\lfloor\frac{i}{2}\rfloor}(\alpha+\beta)_0^{2k}(-\alpha)_1^{\lceil\frac{i}{2}\rceil-k-1}.$ Then $l_B((uv)^kuw(i))=-4k-3+\lceil\frac{i}{2}\rceil-\lfloor\frac{i}{2}\rfloor$. If $i$ is even the twisted length is $-4k-3$. If $i$ is odd, the twisted length is $-4k-2.$

(iv) $N((uv)^kuw(i))=(\alpha+\beta)_0^{2k}(-\alpha)_1^{\lceil\frac{i}{2}\rceil-k-1}(-\beta)_1^{\lceil-\frac{i}{2}\rceil-k-1}.$
Then $l_B((uv)^kuw(i))=-4k-3+\lceil\frac{i}{2}\rceil+\lceil-\frac{i}{2}\rceil$. If $i$ is even the twisted length is $-4k-3$. If $i$ is odd, the twisted length is $-4k-2.$

(v) If $k-\lceil\frac{i}{2}\rceil=-1$, $N((uv)^kuw(i))=\beta_0^{k+\lfloor\frac{i}{2}\rfloor}(\alpha+\beta)_0^{2k}.$ Then $l_B((uv)^kuw(i))=-3k-2-\lfloor\frac{i}{2}\rfloor=-4k-3+\lceil\frac{i}{2}\rceil-\lfloor\frac{i}{2}\rfloor$. If $i$ is even the twisted length is $-4k-3$. If $i$ is odd, the twisted length is $-4k-2.$

(vi) If $k+\lfloor\frac{i}{2}\rfloor=-1$, $N((uv)^kuw(i))=\alpha_0^{k-\lceil\frac{i}{2}\rceil}(\alpha+\beta)_0^{2k}.$ Then $l_B((uv)^kuw(i))=-3k-2+\lceil\frac{i}{2}\rceil=-4k-3+\lceil\frac{i}{2}\rceil-\lfloor\frac{i}{2}\rfloor$. If $i$ is even the twisted length is $-4k-3$. If $i$ is odd, the twisted length is $-4k-2.$

(3) Let $\sigma$  be the group automorphism of $\widetilde{W}$ induced by the map $s_{\delta-\alpha-\beta}\mapsto s_{\alpha}, s_{\beta}\mapsto s_{\delta-\alpha-\beta}, s_{\alpha}\mapsto s_{\beta}$.

Note that $\sigma(t_{\alpha^{\vee}})=\sigma(s_{\alpha}s_{\beta}s_{\delta-\alpha-\beta}s_{\beta})=s_{\beta}s_{\delta-\alpha-\beta}s_{\alpha}s_{\delta-\alpha-\beta}=t_{\beta^{\vee}}$ and $\sigma(t_{\beta^{\vee}})=\sigma(s_{\beta}s_{\alpha}s_{\delta-\alpha-\beta}s_{\alpha})=s_{\delta-\alpha-\beta}s_{\beta}s_{\alpha}s_{\beta}=t_{-(\alpha+\beta)^{\vee}}$. So we conclude $\sigma(T)=T$.

We show that there exists a poset automorphism $\eta$ of $(\widetilde{W},\leq_B)$ given by $w\mapsto \sigma(w)s_{\alpha}s_{\beta}$ and a poset automorphism $\eta'$ of $(\widetilde{W},\leq_B)$ given by $w\mapsto \sigma^{-1}(w)s_{\beta}s_{\alpha}$.

 Using the fact that $T$ is a normal subgroup one obtains that
$$\eta(w)\in s_{\alpha}s_{\beta}T,$$
$$\eta(s_{\alpha}s_{\beta}w)\in s_{\beta}s_{\alpha}T,$$
$$\eta(s_{\beta}s_{\alpha}w)\in T,$$
$$\eta(s_{\beta}s_{\alpha}s_{\beta}w)\in s_{\beta}T,$$
$$\eta(s_{\beta}T)\in s_{\alpha}T,$$
$$\eta(s_{\alpha}T)\in s_{\beta}s_{\alpha}s_{\beta}T.$$

By above and the covering relations given in Lemma \ref{sixtype} one can easily checks that $\eta$ preserves covering relations. Similarly one verifies that $\eta'$ also preserves covering relations. Let $\rho=\eta(\eta')^{-1}$ be the poset automorphism of $(\widetilde{W},\leq_B)$.

For $k\in \mathbb{Z},$ $\rho^k(w)=\sigma^{2k}(w)(x(2k))^{-1}$.
For any $z\in U$ (as in (2)) we have
$$\rho^2(w(i)^{-1}z)=\sigma^{-1}(w(i)^{-1}z)x(2)^{-1}=w(i+2)^{-1}(z).$$

By (2) if $l_B(x)=l_B(y)$, $x=w(j)^{-1}z$ and $y=w(k)^{-1}z$ where $j$ and $k$ are of the same parity and $z\in U$. Then for some $t$, $\rho^t(x)=y$.

Now assume $l_B(x)\neq l_B(y)$ but they are of the same parity. Let $v,u$ be as in (2).
We compute
$$\eta(w(i)^{-1}u(vu)^k)=w(i+1)^{-1}(vu)^{k+1},$$
$$\eta(w(i)^{-1}(vu)^k)=w(i+1)^{-1}u(vu)^k,$$
$$\eta(w(i)^{-1}v(uv)^k)=w(i+1)^{-1}(uv)^k,$$
$$\eta(w(i)^{-1}(uv)^k)=w(i+1)^{-1}v(uv)^{k-1}.$$

Hence for any $z\in \widetilde{W}.$ $l_B(\eta(z))=l_B(z)-2.$
Therefore for some $t,$ $l_B(x)=l_B(\eta^t(y))$.
\end{proof}

\begin{remark}
One of the classical theme in the study of Coxeter group  (or more generally combinatorics) is the enumeration problem.
Given the local finiteness of the twisted Bruhat order on affine Weyl groups, one can
define the Poincar\'e series associated to the twisted Bruhat order $\leq_B$ and an element $w$ to be $F_{w,B}(t)=\sum_{u\leq_B w}t^{l(w)-l(u)}$. This generalizes the usual Poincar\'e polynomial associated to the ordinary Bruhat order.
For type $\widetilde{A}_2$ and the above biclosed set $B$, we have the following result.

If $l(w)$ is even, $F_{w,B}(t)=\frac{t^3+2t^2+2t+1}{t^4-2t^2+1}$. If $l(w)$ is odd, $F_{w,B}(t)=\frac{2t^2+3t+1}{t^4-2t^2+1}$.

To see this, one can apply the Principle of Inculsion-Exclusion and Proposition \ref{posetprop} (3) to obtain
$$F_1(t)=1+2tF_2(t)-2t^2F_1(t)+t^3F_2(t)$$
$$F_2(t)=1+3tF_1(t)-2t^2F_2(t)$$
where $F_1(t)=F_{w,B}(t)$ for $w$ even and $F_2(t)=F_{w,B}(t)$ for $w$ odd.
Solve for $F_1(t)$ and $F_2(t)$ one obtains  the closed formlae.
\end{remark}

\section{Tope Poset of the affine root system and a Bj\"{o}rner-Edelman-Ziegler type theorem}\label{omsec}

The root system of a Coxeter group can be naturally regarded as an oriented matroid. The affine root systems provide interesting and accessible examples of infinite oriented matroids in the sense of \cite{largeconvex}.
In this section we will use twisted weak order as a tool to investigate the tope poset of the oriented matroid arising from an affine root system.

 An (possibly infinite) oriented matroid is a triple $(E,*,cx)$  where $E$ is a set with an involution map $*: E\rightarrow E$ (i.e. $x^{**}=x, x\neq x^*$) and $cx$ a closure operator on $E$ such that

(i) if $x\in cx(X)$ there exists a finite set $Y\subseteq  X$ such that $x\in cx(Y)$,

(ii) $cx(X)^*=cx(X^*)$,

(iii) if $x\in cx(X\cup \{x^*\})$ then $x\in cx(X)$,

(iv) if $x\in cx(X\cup \{y^*\})$ and $x\not\in cx(X)$ then $y\in cx(X\backslash\{y\}\cup\{x^*\})$.

Given a root system $\Phi$, $(\Phi,-,\text{cone}_{\Phi})$ is an oriented matroid where $\text{cone}_{\Phi}=\text{cone}(\Gamma)\cap \Phi$ and $\cone(\Gamma)=\{\sum_{i\in I}k_iv_i|v_i\in \Gamma\cup\{0\}, k_i\in \mathbb{R}_{\geq 0}, |I|<\infty\}$.

A $cx$-closed set  in $E$ is said to be convex.
A convex set $H\subset E$ is called a hemispace or a tope if $H\cap H^*=\emptyset, H\cup H^*=E$.
Suppose that $\Phi$ is the root system of a finite Coxeter system. Then every positive system is a hemispace of the oriented matroid $(\Phi,-,\text{cone}_{\Phi})$ and vice versa. We denote the set of hemispaces by $\mathcal{H}(E).$
For a finite oriented matroid $(E,*,cx)$
and a hemispace $H$, there exists a unique minimal subset of $H$ whose $cx-$closure is $H$. We denote such a set by $\mathrm{ex}(H)$ and call it the set of extreme elements of $H.$
A hemispace is said to be simplicial if $|\mathrm{ex}(H)|$ equals the rank of the underlying (unoriented) matroid of $(E,*,cx)$. A finite oriented matroid is said to be simplicial if all of its hemispaces are simplicial.

Denote by $\Delta$ the set symmetric difference. Fix a hemispace $H$ of $(E,*,cx)$ and one can define a poset structure on the set of hemispaces by $F\leq G\Longleftrightarrow F\Delta H \subset G\Delta H, F,G\in \mathcal{H}(E).$ We call such a poset the tope poset based at $H.$ In a tope poset $P$ of an infinite oriented matroid, we say two hemisapces $H_1, H_2$ are in the same block of $P$ if their symmetric difference is finite.

In \cite{hyperplane}, Bj\"{o}rner, Edelman and Ziegler proved the following

\begin{theorem}
For a finite simplicial oriented matroid, the tope poset for any choice of base hemispace is a lattice.
\end{theorem}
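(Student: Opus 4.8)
The plan is to reduce the statement to the case where the base hemispace is itself simplicial and then to treat that case. The reduction is immediate: by definition a simplicial oriented matroid has all of its hemispaces simplicial, so in particular the chosen base $H_0$ is, and it remains to prove ``if $H_0$ is simplicial then the tope poset $\mathcal{P}$ based at $H_0$ is a lattice.'' First I would record some elementary structure of $\mathcal{P}$ valid for any finite oriented matroid. Writing $S(F)=F\,\Delta\,H_0$ for the separation set of a hemispace $F$, one has $F\leq G$ in $\mathcal{P}$ exactly when $S(F)\subseteq S(G)$; the bottom of $\mathcal{P}$ is $H_0$ and the top is the antipode $H_0^{*}=E\setminus H_0$, since $S(H_0^{*})=E$; and, using the hemispace axioms, each $S(F)$ is stable under the involution $*$, with $S(F^{*})=E\setminus S(F)$, so $F\mapsto F^{*}$ is an order-reversing bijection of $\mathcal{P}$ onto itself. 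A finite poset with a top element is a lattice as soon as it is a meet-semilattice, so the whole problem reduces to showing that any two hemispaces $F,G$ have a greatest lower bound.

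Since separation sets are $*$-stable, the assignment $F\mapsto s(F):=H_0\setminus F=H_0\cap F^{*}$ identifies $\mathcal{P}$, as a poset, with a family $\mathcal{B}$ of subsets of the ``positive part'' $H_0$ ordered by inclusion; each $s\in\mathcal{B}$ comes from a hemispace and in particular has both $s$ and $H_0\setminus s$ convex (for the reflection arrangement of a finite Coxeter group with $H_0$ the standard positive system these $s$ are the biclosed subsets of $\Phi^{+}$, i.e. the inversion sets, and $\mathcal{P}$ is the weak order; other base hemispaces give the twisted weak orders $\leq_B'$ of this paper, which over a finite root system are all isomorphic to it). In this language a greatest lower bound of $F$ and $G$ corresponds to a largest member of $\mathcal{B}$ contained in $s(F)\cap s(G)$. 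Since $s(F)\cap s(G)$ need not itself lie in $\mathcal{B}$, the real content is that the members of $\mathcal{B}$ below a prescribed subset of $H_0$ always have a maximum --- equivalently, by the self-duality above, that there is a smallest member of $\mathcal{B}$ above a prescribed subset, i.e. a genuine ``$\mathcal{B}$-closure.''

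Establishing this closure property is the crux and is the only place the simplicial hypothesis is used; I would prove it by a rank-$2$ (pencil) analysis. Membership in $\mathcal{B}$ is a local condition: $s\subseteq H_0$ lies in $\mathcal{B}$ precisely when, in every rank-$2$ subarrangement (corank-$2$ flat), the trace of $s$ is one of the two ``initial segments from an end'' of the linear order that $H_0$ induces on the elements of that pencil. The potential obstruction to closing a set up under these constraints is the usual one --- two pencils sharing some elements can demand incompatible initial segments --- and the theorem amounts to saying that when $H_0$ is simplicial this never happens. Concretely, I would show that the facet-defining elements of the simplicial base $H_0$ restrict, in every pencil, to facet-defining elements of the base region there, so that each pencil satisfies the rank-$2$ criterion known to be equivalent to the lattice property; granting this, the greedy wall-crossing that walks from $H_0$ toward $F$ and toward $G$ can be carried out entirely through simplicial hemispaces --- using the exchange behaviour of extreme sets of simplicial topes --- and it terminates at the desired common lower bound while certifying that it dominates every other one. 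Deriving the rank-$2$ criterion from simpliciality, and extracting maximality of the construction from it, is the part I expect to require genuine work; once it is in place, all pairs have meets, $\mathcal{P}$ is a meet-semilattice, and by the reductions above it is a lattice for every base hemispace.
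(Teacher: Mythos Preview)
The paper does not prove this theorem at all: it is quoted as a known result of Bj\"orner, Edelman and Ziegler \cite{hyperplane} and used only as motivation for the analogous affine statement (Theorem~\ref{intervallattice}). There is therefore no ``paper's own proof'' to compare your proposal against.

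As for the proposal itself, the preliminary reductions are fine: finiteness plus the existence of the antipodal top element does reduce the problem to the existence of pairwise meets, the self-duality $F\mapsto F^{*}$ behaves as you describe, and the bijection $F\mapsto s(F)=H_0\setminus F$ correctly identifies the tope poset with a family of ``biclosed'' subsets of $H_0$ under inclusion. However, what you label the crux is not actually carried out; you explicitly flag that ``deriving the rank-$2$ criterion from simpliciality, and extracting maximality of the construction from it, is the part I expect to require genuine work,'' so at present this is a plan rather than a proof. Moreover, the specific mechanism you suggest---that the extreme elements of a simplicial base tope restrict to extreme elements in every rank-$2$ pencil---is not correct as stated: a pencil need not contain any facet hyperplane of $H_0$ at all, and the Bj\"orner--Edelman--Ziegler argument does not proceed via that claim. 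Their proof shows instead that when the base region $B$ is simplicial, any region $R\neq B$ has a wall which is simultaneously a wall of $B$ separating $R$ from $B$; iterating this produces, for any two topes, a canonical greedy descent to a common lower bound that one then checks is the meet. If you want to complete your sketch you should replace the pencil-restriction claim by this wall-sharing lemma (or an equivalent gatedness statement) and then verify that the greedy descent dominates every common lower bound.
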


It is well known that for the root system of a finite Coxeter system, the corresponding oriented matroid is simplicial. In fact all tope posets are isomorphic to the ordinary right weak  order of the Coxeter group (which is a lattice). In this section we prove the following analogous theorem for irreducible affine root systems.

\begin{theorem}\label{intervallattice}
Let $\Phi$ be a finite irreducible crystallographic root system and let $H$ be any hemispace of the oriented matroid $(\widetilde{\Phi},-,cx)$  where $cx:=\mathrm{cone}_{\widetilde{\Phi}}$. Let $H_1, H_2$ be two hemispaces in a same block and $H_1\leq H_2$ in the tope poset based at $H$. Then $[H_1, H_2]$ is a lattice.
\end{theorem}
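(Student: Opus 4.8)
The plan is to reduce $[H_1,H_2]$ to a finite interval in an ordinary right weak order. First I would change the base hemispace to $H_1$: one checks that the poset $[H_1,H_2]$ computed in the tope poset based at $H$ agrees with the one computed in the tope poset based at $H_1$. Indeed, writing $\Delta$ for symmetric difference, the hypothesis $H_1\leq H_2$ (based at $H$) gives $H_1\Delta H\subseteq H_2\Delta H$, so $H_2\Delta H_1=(H_2\Delta H)\setminus(H_1\Delta H)$ is disjoint from $H_1\Delta H$; hence any hemispace $F$ with $F\Delta H_1\subseteq H_2\Delta H_1$ satisfies $F\Delta H=(F\Delta H_1)\uplus(H_1\Delta H)$, and this identity makes the two descriptions coincide as posets. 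So I may assume $H=H_1$, whence $H_1$ is the minimum of the tope poset based at $H_1$, and in particular of the block $\mathcal{B}$ containing it. As $H_1,H_2$ lie in one block, $H_2\Delta H_1$ is finite; since $F\mapsto F\Delta H_1$ is injective, $[H_1,H_2]$ is a finite interval of $\mathcal{B}$ with bottom $H_1$.

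Next I would invoke the results of Section~\ref{omsec} relating (blocks of) the tope poset to twisted weak Bruhat orders: $\mathcal{B}$, with the order induced from the tope poset based inside it, is isomorphic to a right twisted weak Bruhat order $(W_0,\leq_C')$ on a finite or affine Coxeter group $W_0$, the bottom $H_1$ going to some $x_0$ and $H_2$ to some $x_1$ with $x_0\leq_C' x_1$. Since $\mathcal{B}$ has a minimum, so does $(W_0,\leq_C')$; translating that minimum to the identity by Lemma~\ref{dotactioniso} (applied to $W_0$) yields an isomorphic right twisted weak order in which $e$ is the minimum, and a short computation --- every inversion set of $W_0$ must miss the twisting set, yet these inversion sets cover the positive system --- forces the twisting set to be empty. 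Thus $[H_1,H_2]$ is isomorphic to the interval $[e,w]$ in the \emph{ordinary} right weak order $(W_0,\leq_{\emptyset}')$, for some $w\in W_0$. (One can also bypass the last normalization: once the bottom is moved to $e$ one has $e\leq_B'w$, i.e.\ $N(w)\cap B=\emptyset$, and then for every $u$ with $N(u)\subseteq N(w)\subseteq\Phi^+\setminus B$ the twisted relations reduce to inclusion of inversion sets, so $[e,w]$ in $(W_0,\leq_B')$ is literally $[e,w]$ in $(W_0,\leq_{\emptyset}')$.)

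Finally, the finite interval $[e,w]$ in the ordinary right weak order is a lattice: $(W_0,\leq_{\emptyset}')$ is a complete meet semilattice, so any two $u,v\leq w$ have a meet, which stays below $w$, and a join, namely the meet of their finitely many common upper bounds lying below $w$ (a nonempty set, as it contains $w$). Hence $[e,w]$, and therefore $[H_1,H_2]$, is a lattice. The main obstacle is the middle step --- having at hand, from Section~\ref{omsec}, the dictionary between blocks of the tope poset and twisted weak Bruhat orders and the images of $H_1,H_2$ under it, and noting that a minimum of a block trivializes the twisting; the remaining steps are the symmetric-difference bookkeeping of the first paragraph and the classical fact that intervals of a weak order are lattices.
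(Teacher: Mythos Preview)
Your argument is correct. Both you and the paper follow the same overall strategy: identify $[H_1,H_2]$ with a closed interval in a twisted weak order via Corollary~\ref{block}, pass to an interval of the \emph{ordinary} weak order, and conclude by the complete meet-semilattice property. The difference lies in how the middle step is carried out. The paper simply invokes Corollary~4.2 of \cite{orderpaper}, which says that every closed interval of a twisted weak order is isomorphic to one of the ordinary weak order. You instead supply this reduction by hand: the symmetric-difference calculation letting you rebase the tope poset at $H_1$ is an elegant extra step (not in the paper), and it makes $H_1$ the global minimum of its block, so that after translating via Lemma~\ref{dotactioniso} the minimum becomes $e$ and the twisting set is forced to be empty. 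Your parenthetical bypass is, in effect, an inline proof of the cited Corollary~4.2 in the special case needed, obtained from Lemma~\ref{dotactioniso} together with the observation that $e\le_B' w$ forces $N(w)\cap B=\emptyset$, whence $[e,w]$ in $\le_B'$ literally equals $[e,w]$ in $\le_\emptyset'$. So your route is a little longer but more self-contained, avoiding the external citation; the paper's is shorter but leans on \cite{orderpaper}.
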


We will first classify all hemispaces of $(\widetilde{\Phi},-,cx)$.

Let $\Gamma$ be a subset of a real vector space $V$. A subset $A$ of $\Gamma$ is called biconvex in $\Gamma$ if $A$ and $\Gamma\backslash A$ are both convex in the oriented matroid $(\Gamma,-,\mathrm{cone}_{\Gamma})$.

For the next lemma, let $\Phi$ be a finite irreducible crystallographic root system and let $\Psi^+$ be a positive system of $\Phi$.

\begin{lemma}\label{lem:coneintersectzero}
Let $P(\Psi^+,\Delta_1,\Delta_2)$ be a 2 closure biclosed set in $\Phi$. Then $\mathbb{R}\Delta_1\cap \mathrm{cone}(P(\Psi^+,\Delta_1,\Delta_2))=0$ and the 2 closure biclosed set $P(\Psi^+,\Delta_1,\Delta_2)$ is biconvex in $\Phi$.
\end{lemma}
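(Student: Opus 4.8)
The plan is to handle the two assertions in order, the first being the genuine content and the second being a short deduction. Recall $P(\Psi^+,\Delta_1,\Delta_2) = (\Psi^+\setminus \Phi_{\Delta_1})\cup \Phi_{\Delta_2}$, where $\Delta_1$ and $\Delta_2$ are orthogonal subsets of the simple system of $\Psi^+$. First I would observe that $\operatorname{cone}(P(\Psi^+,\Delta_1,\Delta_2))$ is contained in $\operatorname{cone}(\Psi^+\setminus\Phi_{\Delta_1}) + \mathbb{R}\Delta_2$, and since $\mathbb{R}\Delta_1 \perp \mathbb{R}\Delta_2$, it suffices to show $\mathbb{R}\Delta_1 \cap \operatorname{cone}(\Psi^+\setminus\Phi_{\Delta_1}) = 0$. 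To see this, let $\varpi = \sum_{\gamma\in\Delta_1}\varpi_\gamma$ be the sum of the fundamental coweights dual to the simple roots $\gamma\in\Delta_1$ (extended by $0$ on $\Delta\setminus\Delta_1$ if one works inside the ambient space); then $(\varpi,\gamma) > 0$ for $\gamma\in\Delta_1$, $(\varpi,\gamma)=0$ for $\gamma\in\Delta\setminus\Delta_1$. Every root in $\Psi^+\setminus\Phi_{\Delta_1}$ is a nonnegative combination of simple roots with at least one simple root outside $\Delta_1$ appearing with positive coefficient, so it has strictly positive pairing with $\varpi$; hence every nonzero element of $\operatorname{cone}(\Psi^+\setminus\Phi_{\Delta_1})$ pairs strictly positively with $\varpi$. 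On the other hand, every element of $\mathbb{R}\Delta_1$ pairs to $0$ with $\varpi$. This forces the intersection to be $\{0\}$.

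For the second assertion, I would invoke the characterization of biclosed sets in $\Phi$ from \cite{biclosedphi} together with the fact (to be checked) that $2$-closure and cone-convexity agree for crystallographic root systems in this setting. Concretely: $P(\Psi^+,\Delta_1,\Delta_2)$ is $2$-closed by hypothesis and its complement is $P(-\Psi^+,\Delta_2,\Delta_1)$ — or, more carefully, $\Phi\setminus P(\Psi^+,\Delta_1,\Delta_2) = (\Psi^-\setminus\Phi_{\Delta_2})\cup\Phi_{\Delta_1}$, which is again of the form $P(\Psi'^+,\Delta_2',\Delta_1')$ for a suitable positive system — and hence is also $2$-closed. To upgrade $2$-closedness to convexity (i.e. $\operatorname{cone}(A)\cap\Phi = A$), I would use the part just proved: a convex combination $\sum k_i \alpha_i$ with $\alpha_i \in P(\Psi^+,\Delta_1,\Delta_2)$ lying in $\Phi$ decomposes, via the $\varpi$-pairing, into those $\alpha_i$ with positive $\varpi$-pairing (the $\Psi^+\setminus\Phi_{\Delta_1}$ part) and those with zero pairing (the $\Phi_{\Delta_2}$ part); if the $\varpi$-value of the sum is positive then the result lies outside $\Phi_{\Delta_1}$ and, being a nonnegative combination of roots in $\Psi^+$, lies in $\Psi^+\setminus\Phi_{\Delta_1}$, and if it is zero then the positive-pairing part must vanish (by the first assertion applied to that subsum lying in $\mathbb{R}\Delta_1^\perp\cap\cdots$) and the root lies in $\operatorname{cone}(\Phi_{\Delta_2})\cap\Phi = \Phi_{\Delta_2}$ by the corresponding statement for the sub-root-system $\Phi_{\Delta_2}$. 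The same argument applied to the complement finishes biconvexity.

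The main obstacle I anticipate is the bookkeeping in the second paragraph: cleanly separating a conical combination of elements of $P(\Psi^+,\Delta_1,\Delta_2)$ into its $\Phi_{\Delta_2}$-component and its $\Psi^+\setminus\Phi_{\Delta_1}$-component and arguing that a root of the total is a root of the appropriate piece. The essential idea — using the linear functional $\varpi$ that vanishes exactly on $\mathbb{R}\Delta_1$ (and is nonnegative on $\Psi^+$, positive off $\Phi_{\Delta_1}$) — is what makes both assertions work, so I would set it up once carefully and reuse it. A minor subtlety is that one must be slightly careful when $\Delta_1 = \emptyset$ (then the first claim is trivially $0 = 0$) or $\Delta_2=\emptyset$ (then $P(\Psi^+,\Delta_1,\emptyset) = \Psi^+\setminus\Phi_{\Delta_1}$ and biconvexity is immediate from convexity of positive systems and the $\varpi$-argument); these degenerate cases should be dispatched first.
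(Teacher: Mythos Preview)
Your overall strategy---separate with a linear functional---is exactly the paper's approach, but the execution has a genuine error in the definition of $\varpi$.

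You set $\varpi=\sum_{\gamma\in\Delta_1}\varpi_\gamma$ and state $(\varpi,\gamma)>0$ for $\gamma\in\Delta_1$, $(\varpi,\gamma)=0$ for $\gamma\in\Delta\setminus\Delta_1$. Two sentences later you claim that every root in $\Psi^+\setminus\Phi_{\Delta_1}$ has \emph{strictly positive} pairing with $\varpi$ because it involves a simple root \emph{outside} $\Delta_1$, and that every element of $\mathbb{R}\Delta_1$ pairs to $0$ with $\varpi$. Both of these are the opposite of what your $\varpi$ does: a simple root $\gamma\in\Delta\setminus\Delta_1$ lies in $\Psi^+\setminus\Phi_{\Delta_1}$ but has $(\varpi,\gamma)=0$, while any nonzero element of $\mathbb{R}\Delta_1$ has nonzero pairing with your $\varpi$. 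You have the right functional in mind but indexed it over the wrong set.

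Even after swapping to $\varpi=\sum_{\gamma\in\Delta\setminus\Delta_1}\varpi_\gamma$, there is a second problem: your reduction ``since $\mathbb{R}\Delta_1\perp\mathbb{R}\Delta_2$, it suffices to show $\mathbb{R}\Delta_1\cap\operatorname{cone}(\Psi^+\setminus\Phi_{\Delta_1})=0$'' is not justified by orthogonality alone. The set $P(\Psi^+,\Delta_1,\Delta_2)$ contains $\Phi_{\Delta_2}^-$, and on those roots the corrected $\varpi$ is \emph{negative}, so it is not nonnegative on the full cone. The paper sidesteps both issues by taking a functional $f$ that is zero on $\Delta_1\cup\Delta_2$ and positive on $\Delta\setminus(\Delta_1\cup\Delta_2)$: then $f\ge 0$ on all of $P(\Psi^+,\Delta_1,\Delta_2)$, and $f(\gamma)=0$ forces each $\gamma_i$ into $\Phi_{\Delta_1\cup\Delta_2}=\Phi_{\Delta_1}\cup\Phi_{\Delta_2}$ (using $\Delta_1\perp\Delta_2$), hence into $\Phi_{\Delta_2}$, hence $\gamma\in\mathbb{R}\Delta_2\cap\mathbb{R}\Delta_1=0$.

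For biconvexity your outline is considerably more elaborate than necessary. The paper simply observes that $\Phi\setminus P(\Psi^+,\Delta_1,\Delta_2)=P(\Psi^-,-\Delta_1,-\Delta_2)$ is again of the same form, and that the same functional $f$ is nonpositive on $-P(\Psi^+,\Delta_1\cup\Delta_2,\emptyset)$, so a conical combination landing there is impossible. No case splitting on the sign of the $\varpi$-value is needed.
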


\begin{proof}
Take a linear function $f:\mathbb{R}\Phi\rightarrow \mathbb{R}$ such that $f$ is positive on $\Delta\backslash (\Delta_1\cup\Delta_2)$ and zero on $\Delta_1\cup \Delta_2.$ Then $f$ is non-negative on $P(\Psi^+,\Delta_1,\Delta_2)$.
If $\gamma_1,\cdots,\gamma_k\in P(\Psi^+,\Delta_1,\Delta_2), c_1,\cdots,c_k\in \mathbb{R}_{>0}$ and $\gamma=\sum_{i=1}^kc_i\gamma_i\in \mathbb{R}\Delta_1,$ then apply $f$ on both sides we see all $\gamma_i$ are in $\mathbb{R}(\Delta_1\cup \Delta_2)$ as $f(\gamma)=0.$ Then they are in either $\mathbb{R}\Delta_1$ or $\mathbb{R}\Delta_2$ as $\Delta_1\perp\Delta_2.$ But since they are in $P(\Psi^+,\Delta_1,\Delta_2)$, they have to be in $\mathbb{R}\Delta_2.$ Then this forces $\gamma=0.$

To prove the second assertion, again let $\gamma_1,\cdots,\gamma_k\in P(\Psi^+,\Delta_1,\Delta_2), c_1,\cdots,c_k\in \mathbb{R}_{>0}$ and $\gamma=\sum_{i=1}^kc_i\gamma_i$. Note that $\Phi\backslash P(\Psi^+,\Delta_1,\Delta_2)=\mathbb{R}\Delta_1\uplus -P(\Psi^+,\Delta_1\cup\Delta_2,\emptyset).$ Suppose $\gamma\in -P(\Psi^+,\Delta_1\cup\Delta_2,\emptyset)$, apply $f$ on $\gamma$ one gets a negative number, which is a contradiction. Therefore combining this with the first assertion,  one sees $P(\Psi^+,\Delta_1,\Delta_2)$ is convex.
Note $\Phi\backslash P(\Psi^+,\Delta_1,\Delta_2)=P(\Psi^-,-\Delta_1,-\Delta_2)$ and therefore is convex as well. Hence we see that $P(\Psi^+,\Delta_1,\Delta_2)$ is biconvex in $\Phi$.
\end{proof}

Similarly one can define the notion of a 2 closure hemispace in a root system $\Phi$. A 2 closure hemispace in $\Phi$ is a 2 closure closed set $H$ such that $H\uplus -H=\Phi$. A hemispace of the oriented matroid $(\Phi,-,cx)$ is necessarily a 2 closure hemispace of $\Phi$ but not vice versa.
There exists a bijection between the set of 2 closure biclosed sets in $\Phi^+$ and the set of 2 closure hemispaces under the map $B\mapsto B\uplus -(\Phi^+\backslash B)$ for $B$ a 2 closure biclosed set in $\Phi^+.$ Since the 2 closure biclosed sets in $\widetilde{\Phi}^+$ are classified, the corresponding 2 closure hemispaces are therefore known via the bijection. We will next examine which of these 2 closure hemispaces are indeed hemispaces of the oriented matroid $(\widetilde{\Phi},-,cx)$.

\begin{lemma}\label{lem:finitehemispace}
For any Coxeter system $(W,S)$ and $w\in W$,

(1) the 2-closure hemispace $N(w)\uplus -(\Phi^+\backslash N(w))$ is convex,

(2) the 2-closure hemispace $N(w)'\uplus -(\Phi^+\backslash N(w)')$ is convex.
\end{lemma}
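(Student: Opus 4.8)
The plan is to notice that the two hemispaces in question are negatives of one another, so that (2) follows from (1) by applying the involution $*=-$ (which preserves convexity by axiom (ii)), and then to prove (1) by exhibiting $N(w)\uplus -(\Phi^{+}\backslash N(w))$ as the image of the convex set $\Phi^{-}$ under the linear action of $w$ on $\Phi$.

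First I would record two easy facts. For the first: since each $w\in W$ acts $\mathbb{R}$-linearly on $\linspan(\Phi)$ and permutes $\Phi$, one has $w(\cone(X))=\cone(wX)$ and $w(\Phi)=\Phi$, hence $w(cx(X))=cx(wX)$ for every $X\subseteq\Phi$, where $cx=\cone_{\Phi}$; thus $w$ maps convex subsets of $\Phi$ to convex subsets of $\Phi$. For the second: $\Phi^{+}$ (and likewise $\Phi^{-}$) is convex. Picking a linear functional $f$ on $\linspan(\Phi)$ that is positive on every simple root — such $f$ exists because the simple roots are linearly independent — one has $f>0$ on $\Phi^{+}$, so if $\gamma=\sum_i c_i\gamma_i\in\Phi$ with $\gamma_i\in\Phi^{+}$ and $c_i\in\mathbb{R}_{\geq 0}$, then $f(\gamma)\geq 0$, with equality forcing $\gamma=0$, which is impossible; hence $\gamma\in\Phi^{+}$. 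The point here that needs a little care is to use genuine $\cone_{\Phi}$-convexity and not merely $2$-closure, and the functional $f$ supplies exactly this.

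Next I would compute the hemispace. From the definition $N(w)=\{\alpha\in\Phi^{+}\mid w^{-1}(\alpha)\in\Phi^{-}\}$ one gets $w^{-1}(N(w))\subseteq\Phi^{-}$ and $w^{-1}(\Phi^{+}\backslash N(w))\subseteq\Phi^{+}$, hence $w^{-1}\bigl(N(w)\uplus -(\Phi^{+}\backslash N(w))\bigr)\subseteq\Phi^{-}$; applying the same reasoning to the complementary set gives $w^{-1}\bigl(-N(w)\uplus(\Phi^{+}\backslash N(w))\bigr)\subseteq\Phi^{+}$. Since $w^{-1}$ is a bijection of $\Phi$ carrying the partition $\Phi=\bigl(N(w)\uplus -(\Phi^{+}\backslash N(w))\bigr)\uplus\bigl(-N(w)\uplus(\Phi^{+}\backslash N(w))\bigr)$ to a refinement of $\Phi=\Phi^{-}\uplus\Phi^{+}$, both inclusions must be equalities (this argument uses only that $w^{-1}$ is a bijection, not finiteness of $\Phi$). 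Therefore $N(w)\uplus -(\Phi^{+}\backslash N(w))=w(\Phi^{-})$, which is convex by the two facts above, proving (1). For (2), observe $N(w)'\uplus -(\Phi^{+}\backslash N(w)')=N(w)'\uplus -N(w)=-\bigl(N(w)\uplus -(\Phi^{+}\backslash N(w))\bigr)$ (equivalently, this set equals $w(\Phi^{+})$), so it is convex as the image of a convex set under the involution $*=-$.

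I do not expect any real obstacle. The only subtle points are (a) distinguishing $\cone_{\Phi}$-convexity from $2$-closure when proving $\Phi^{\pm}$ convex, which is handled by the linear functional $f$, and (b) upgrading the inclusion $w^{-1}\bigl(N(w)\uplus -(\Phi^{+}\backslash N(w))\bigr)\subseteq\Phi^{-}$ to an equality without invoking cardinality (important because $\Phi$ may be infinite), which is handled by the partition argument above.
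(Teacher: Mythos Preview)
Your proof is correct and follows essentially the same approach as the paper: the paper's proof simply records the identities $N(w)\uplus -(\Phi^{+}\backslash N(w))=-w\Phi^{+}$ and $N(w)'\uplus -(\Phi^{+}\backslash N(w)')=w\Phi^{+}$ and invokes convexity of $\Phi^{+}$, which is exactly what you establish (with the details of the identity and of $\cone_{\Phi}$-convexity of $\Phi^{+}$ spelled out). Your additional care in point (b), upgrading the inclusion to an equality via the partition rather than cardinality, is a nice touch, though in fact the direct computation $w(\Phi^{-})=(w\Phi^{-}\cap\Phi^{+})\uplus(w\Phi^{-}\cap\Phi^{-})=N(w)\uplus -(\Phi^{+}\backslash N(w))$ already gives equality without any such argument.
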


\begin{proof}
(1) follows from the fact that $N(w)\uplus -(\Phi^+\backslash N(w))=-w\Phi^+$ and that $\Phi^+$ is convex.
(2) follows from the fact that $N(w)'\uplus -(\Phi^+\backslash N(w)')=w\Phi^+$ and that $\Phi^+$ is convex.
\end{proof}

Now let $P(\Psi^+,\Delta_1,\Delta_2)$ be a 2 closure biclosed set (also biconvex by Lemma \ref{lem:coneintersectzero}) in $\Phi$. Any 2 closure biclosed set in $(\Phi)^{\wedge}$ is of the form $(P(\Psi^+,\Delta_1,\Delta_2)^{\wedge}\backslash B_2)\cup B_1$
where $B_1$ (resp. $B_2$) be a finite 2 closure biclosed set in $(\Phi_1)^{\wedge}$ (resp. $(\Phi_2)^{\wedge}$) and $\Phi_i=\mathbb{R}\Delta_i\cap \Phi$.
Then we can associate to it a 2 closure hemispace  $H:=B\uplus -((\Phi)^{\wedge}\backslash B)$. We  investigate when this 2 closure hemispace is a cone closure (i.e. oriented matroidal) hemispace.

\begin{proposition}\label{hemispacecoincide}
(1) $H$ is convex if $\Delta_2$ is empty or $\Delta_1$ is empty.

(2) If both $\Delta_1$ and $\Delta_2$ are non-empty, $H$ is not convex.
\end{proposition}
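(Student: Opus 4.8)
The plan is to treat the two statements separately, with part (1) following from the results already established on biconvexity together with the structure of affine root systems, and part (2) by exhibiting an explicit conical dependence that witnesses the failure of convexity. For part (1), suppose first $\Delta_2=\emptyset$. Then $B$ differs from $P(\Psi^+,\Delta_1,\emptyset)^{\wedge}$ in only finitely many roots (those inside $(\Phi_1)^{\wedge}$), and by Theorem \ref{infiniteword} the set $(\Psi^+\backslash\Phi_{\Delta_1})^{\wedge}$-type biclosed sets are precisely the inversion sets of infinite reduced words. So $B=N(x)$ for an infinite reduced word $x$, or more precisely $H=B\uplus-((\Phi)^{\wedge}\backslash B)$ agrees with $\bigcup_i(-w_i\Phi^{\wedge})$-type limits; concretely, writing $x=s_{i_1}s_{i_2}\cdots$, the hemispace $H$ is the increasing union $\bigcup_k N(x(k))\uplus-\big(\widetilde\Phi^+\backslash N(x(k))\big)$ intersected appropriately, each term of which is $-x(k)\widetilde\Phi^+$, hence convex by Lemma \ref{lem:finitehemispace}(1). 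A directed union of convex sets is convex (using axiom (i) of the oriented matroid: any conical dependence among elements of $H$ involves only finitely many of them, hence lies in one $-x(k)\widetilde\Phi^+$), so $H$ is convex. The case $\Delta_1=\emptyset$ is dual: then $B$ differs finitely from $P(\Psi^+,\emptyset,\Delta_2)^{\wedge}=(\Psi^+)^{\wedge}$ minus the finite set $(\Phi_2)^\wedge$-worth of roots, and one runs the same directed-union argument using Lemma \ref{lem:finitehemispace}(2) on the complement side, i.e. $H$ is a directed union of sets of the form $w\widetilde\Phi^+$. The finitely many exceptional roots coming from $B_1$ (resp. $B_2$) only shift things by a finite convex set and do not destroy convexity, which one checks directly from the oriented matroid axioms.

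For part (2), assume both $\Delta_1$ and $\Delta_2$ are non-empty and pick $\gamma_1\in\Delta_1$, $\gamma_2\in\Delta_2$. The idea is that $P(\Psi^+,\Delta_1,\Delta_2)$ contains the full root subsystem $\Phi_{\Delta_2}$ (in particular $\gamma_2$ and $-\gamma_2$ after the $\wedge$-lift, since $-\gamma_2\in\Phi^-$ lifts to $\delta-\gamma_2$), while it omits the entire subsystem $\Phi_{\Delta_1}$; the positive lift $(\gamma_1)_0=\gamma_1$ lies outside $B$, so $-\gamma_1$ (i.e. the negative root $-\gamma_1\in-\Phi^\wedge$) lies in $H$, yet the positive lift $\delta-\gamma_1$ also lies outside $B$... here I must be careful: $\{\gamma_1\}^\wedge=\{\gamma_1+n\delta\mid n\ge 0\}$, and since $I_B\cap\Phi_{\Delta_1}=\emptyset$ only finitely many $\gamma_1+n\delta$ can lie in $B$, so for large $n$ the root $\gamma_1+n\delta\notin B$ hence $-(\gamma_1+n\delta)=-\gamma_1-n\delta\in H$. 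Now use $\delta$: $\delta=\alpha+\delta-\alpha$ is a positive-root relation, and more usefully, since $\Delta_1\perp\Delta_2$ we have that $\gamma_2$ and $-\gamma_2-m\delta$ combine; concretely I would look for a conical combination of elements of $H$ summing to an element of $H^*$. Take $\gamma_2\in H$ (as $\gamma_2\in\Phi_{\Delta_2}\subset B$) and $-\gamma_2-m\delta\in H$ for suitable $m$ (using that $B$ contains a cofinite-in-$\{\gamma_2\}^\wedge$ part but its complement contains the tail of $\{-\gamma_2\}^\wedge$, i.e. $-\gamma_2+k\delta$ for $k\ge 1$ large lies outside $-B$... ). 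Then $\gamma_2+(-\gamma_2-m\delta)=-m\delta$, and $-m\delta$ is in the cone of positive roots (it is $-m$ times $\delta=\gamma+(\delta-\gamma)$ for any $\gamma\in\Phi^+$), hence a nonnegative combination of elements like $-\gamma_1-n\delta$ and $-(\delta-\gamma_1)-n'\delta$-type roots which lie in $H$; chasing this one produces a root of $\widetilde\Phi$ lying in $H^*$ that is in the cone of $H$, contradicting convexity of $H$. The cleanest packaging is: exhibit $\alpha\in\widetilde\Phi$ with $\alpha\in\mathrm{cone}(H)\cap H^*$, equivalently $-\alpha\in H\cap\mathrm{cone}(-H^*)$... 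I would organize it as writing $-\delta$ as a positive combination two ways, once landing in $B$-lifted roots and once in complement-lifted roots, forcing a contradiction.

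The main obstacle I anticipate is part (2): getting the signs and the $\delta$-shifts exactly right so that the conical dependence genuinely uses only elements of $H$ and genuinely lands in $H^*$. The structural facts I will lean on are that $I_B=P(\Psi^+,\Delta_1,\Delta_2)$ contains $\Phi_{\Delta_2}$ but is disjoint from $\Phi_{\Delta_1}$, that for $\alpha\in I_B$ all but finitely many $\alpha+n\delta$ lie in $B$ (Lemma 5.10 of \cite{DyerReflOrder}, already invoked in the proof of Lemma \ref{limit}), and dually that for $\alpha\notin I_B$ only finitely many $\alpha+n\delta$ lie in $B$; also Lemma \ref{lem:coneintersectzero} tells us $\mathbb{R}\Delta_1\cap\mathrm{cone}(P(\Psi^+,\Delta_1,\Delta_2))=0$, which is exactly the "finite level" obstruction that disappears once $\delta$ is available to move roots up and down their $\delta$-chains — that is the conceptual reason the $\wedge$-lift breaks convexity precisely when both $\Delta_1,\Delta_2$ are non-empty. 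For part (1) I expect no real difficulty beyond carefully invoking axiom (i) to reduce conical dependences to finitely many terms and hence to one of the convex pieces from Lemma \ref{lem:finitehemispace}.
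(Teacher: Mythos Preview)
Your approach to part (2) is essentially the paper's: show that both $\delta$ and $-\delta$ lie in $\cone(H)$, whence every $\pm\gamma + k\delta$ does, violating convexity. However, your explicit witness is wrong. You write ``$-\gamma_2 - m\delta \in H$ for suitable $m$'', but for $\gamma_2 \in \Phi_{\Delta_2} \subset I_B$ one has $\gamma_2 + m\delta \in B$ for all large $m$, hence $-\gamma_2 - m\delta \notin H$. The correct move (and what the paper does) is to observe that $-\gamma_2 \in \Phi_{\Delta_2}$ as well, so $-\gamma_2 + k\delta \in B \subset H$ for large $k$; combining with $\gamma_2 + m\delta \in H$ gives $(m+k)\delta \in \cone(H)$. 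Dually, for $\gamma_1 \in \Phi_{\Delta_1}$ neither $\pm\gamma_1$ lies in $I_B$, so $\pm\gamma_1 + n\delta \notin B$ for large $n$, hence $\mp\gamma_1 - n\delta \in H$, giving $-\delta \in \cone(H)$.

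For part (1) your strategy is genuinely different from the paper's, and as written it has a gap. The paper argues directly: given a conical relation $\sum a_u(\alpha_{i_u} + k_u\delta) + \sum b_v(\gamma_{j_v} + l_v\delta) = \eta$ with the $\alpha$'s in $\Phi_1$ and the $\gamma$'s in $\Psi^+ \setminus \Phi_1$, project to $V$ and invoke Lemma \ref{lem:coneintersectzero} to force all $b_v = 0$, then finish inside $\widetilde{\Phi_1}$ using Lemma \ref{lem:finitehemispace}. Your plan instead exhibits $B$ as the inversion set of an infinite reduced word $x$ (via Theorem \ref{infiniteword}) and tries to write $H$ as a ``directed union'' of the convex sets $-x(k)\widetilde\Phi^+$. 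But these sets are \emph{not} contained in $H$: each $-x(k)\widetilde\Phi^+$ contains the negative roots $-(N(x)\setminus N(x(k)))$, which lie in $H^*$, not $H$. So $H$ is not their union, and knowing that a conical combination lands in one $-x(k)\widetilde\Phi^+$ does not place it in $H$. Your idea can be rescued: any finite $F \subset H$ lies in $-x(m)\widetilde\Phi^+$ for \emph{all} sufficiently large $m$, and $\bigcap_{m \ge k_0} -x(m)\widetilde\Phi^+ = N(x(k_0)) \cup -(\widetilde\Phi^+ \setminus N(x)) \subset H$; but this is an intersection argument, not a directed-union one, and you would need to supply it explicitly.
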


\begin{proof}
(1) Assume $\Delta_2=\emptyset.$
Suppose $\{\pm\alpha_1,\pm\alpha_2,\cdots,\pm\alpha_m\}=\mathbb{R}\Delta_1\cap \Phi=\Phi_1.$
Let $\{\gamma_1,\gamma_2,\cdots,\gamma_p\}=\Psi^+\backslash \Phi_1$.

By the construction $H$ contains $(\gamma_1)_{-\infty}^{\infty},(\gamma_2)_{-\infty}^{\infty},\cdots,(\gamma_p)_{-\infty}^{\infty}$.

Suppose $\alpha_{i_1}+k_1\delta, \alpha_{i_2}+k_2\delta, \cdots, \alpha_{i_t}+k_t\delta, \gamma_{j_1}+l_1\delta, \gamma_{j_2}+l_2\delta,\cdots, \gamma_{j_s}+l_s\delta\in H$ where
$\alpha_{i_p}\in \Phi_1, 1\leq p\leq t.$ Take $a_1, a_2, \cdots, a_t, b_1, b_2, \cdots, b_s\in \mathbb{R}_{>0}$. Assume that
\begin{equation}\label{imp}
\sum_{u=1}^t a_u(\alpha_{i_u}+k_u\delta)+\sum_{v=1}^s b_v(\gamma_{j_v}+l_v\delta)=\alpha'+q\delta
\end{equation}
where $\alpha'\in \Phi_1$ and $\alpha'+q\delta\not\in H.$ Then one has
$$\alpha'-\sum_{u=1}^t a_u\alpha_{i_u}=\sum_{v=1}^s b_v\gamma_{j_v}.$$
By Lemma \ref{lem:coneintersectzero} both sides have to be $0$. This forces all $b_v=0$ (as all $\gamma_i$ are in a positive system). So we have
$$\sum_{u=1}^t a_u(\alpha_{i_u}+k_u\delta)=\alpha'+q\delta.$$
On the other hand, note that $H\cap \widetilde{\Phi_1}$ is a  hemispace in $(\widetilde{\Phi_1},-,\mathrm{cone}_{\widetilde{\Phi_1}})$. Indeed it is $B_1\cup -((\Phi_1)^{\wedge}\backslash B_1)$.
Since $B_1$ is finite and biclosed in $(\Phi_1)^{\wedge}$, it is of the form $\Phi_w$ where $w$ is an element of the reflection subgroup generated by the reflections corresponding to the roots in $(\Phi_1)^{\wedge}$. So Lemma \ref{lem:finitehemispace} (1) ensures the above equation \ref{imp} is not possible.

Now assume that
\begin{equation}\label{imp2}
\sum_{u=1}^t a_u(\alpha_{i_u}+k_u\delta)+\sum_{v=1}^s b_v(\gamma_{j_v}+l_v\delta)=-\gamma_w+q\delta.
\end{equation}

Then one has
$$-\sum_{u=1}^t a_u\alpha_{i_u}=\sum_{v=1}^s b_v\gamma_{j_v}+\gamma_w.$$

Again by Lemma \ref{lem:coneintersectzero} both sides have to be $0$. But all $\gamma_v$ are in a positive system of $\Phi.$ So the equation \ref{imp2} is not possible.

Hence $H$ is indeed convex.

Now assume $\Delta_1=\emptyset$. Then same type of reasoning as above (using Lemma \ref{lem:finitehemispace} (2)) shows $H$ is convex.

(2) Now we assume that both $\Delta_1$ and $\Delta_2$ are non-empty.

Again suppose that $\{\pm\alpha_1,\pm\alpha_2,\cdots,\pm\alpha_m\}=\mathbb{R}\Delta_1\cap \Phi=\Phi_1$ and assume that $\alpha_1,\alpha_2,\cdots, \alpha_m\in \Phi^+.$
Suppose that $\{\pm\beta_1, \pm\beta_2, \cdots, \pm\beta_l\}=\mathbb{R}\Delta_2\cap \Phi=\Phi_2$ and assume that $\beta_1, \beta_2, \cdots, \beta_l\in \Phi^+.$

By Lemma 4.1 of \cite{wang}, $(\{\beta_i+t\delta|t\in \mathbb{Z}\}\cup \{-\beta_i+t\delta|t\in \mathbb{Z}\})\cap (P(\Psi^+,\Delta_1,\Delta_2)^{\wedge}\backslash B_2)\cup B_1$
has four possibilities: %depending on whether $\beta_i$ is in the standard positive system $\Phi^+$ or not:

(i) $(\beta_i)_p^{\infty}\cup (-\beta_i)_1^{\infty}, p\geq 0,$

(ii) $(\beta_i)_p^{\infty}\cup (-\beta_i)_0^{\infty}, p\geq 1,$

(iii) $(-\beta_i)_p^{\infty}\cup (\beta_i)_1^{\infty}, p\geq 0,$

(iv) $(-\beta_i)_p^{\infty}\cup (\beta_i)_0^{\infty}, p\geq 1.$

Correspondingly $(\{\beta_i+t\delta|t\in \mathbb{Z}\}\cup \{-\beta_i+t\delta|t\in \mathbb{Z}\})\cap H$ can be one of the following four possibilities:

(i) $(\beta_i)_p^{\infty}\cup (-\beta_i)_{1-p}^{\infty}, p\geq 0,$

(ii) $(\beta_i)_p^{\infty}\cup (-\beta_i)_{1-p}^{\infty}, p\geq 1,$

(iii) $(-\beta_i)_p^{\infty}\cup (\beta_i)_{1-p}^{\infty}, p\geq 0,$

(iv) $(-\beta_i)_p^{\infty}\cup (\beta_i)_{1-p}^{\infty}, p\geq 1.$

We note in each of these four cases for sufficiently large $s,t\in \mathbb{Z}_{>0}, \beta_i+s\delta$ and $-\beta_i+t\delta\in H$. It follows that $\delta\in \cone(H)$.

Similarly by Lemma 4.1 of \cite{wang}, $(\{\alpha_i+t\delta|t\in \mathbb{Z}\}\cup \{-\alpha_i+t\delta|t\in \mathbb{Z}\})\cap\newline (P(\Psi^+,\Delta_1,\Delta_2)^{\wedge}\backslash B_2)\cup B_1$
has four possibilities:

(i) $(\alpha_i)_0^p, p\geq 0$,

(ii) $(-\alpha_i)_0^p, p\geq 0,$

(iii) $(\alpha_i)_1^p, p\geq 1,$

(iv) $(-\alpha_i)_1^p, p\geq 1.$

Correspondingly $(\{\alpha_i+t\delta|t\in \mathbb{Z}\}\cup \{-\alpha_i+t\delta|t\in \mathbb{Z}\})\cap H$ can be one of the following four possibilities:

(i) $(\alpha_i)_{-\infty}^p\cup (-\alpha_i)_{-\infty}^{-p-1}, p\geq 0,$

(ii) $(-\alpha_i)_{-\infty}^p\cup (\alpha_i)_{-\infty}^{-p-1}, p\geq 0,$

(iii) $(\alpha_i)_{-\infty}^p\cup (-\alpha_i)_{-\infty}^{-p-1}, p\geq 1,$

(iv) $(-\alpha_i)_{-\infty}^p\cup (\alpha_i)_{-\infty}^{-p-1}, p\geq 1.$

We note in each of these four cases  it follows that there exist $s,t\in \mathbb{Z}_{<0}, \beta_i+s\delta$ and $-\beta_i+t\delta\in H$. Hence $-\delta\in \cone(H)$.

Thus we see that $(\{\beta_i+t\delta|t\in \mathbb{Z}\}\cup\{-\beta_i+t\delta|t\in \mathbb{Z}\})$ (resp.  $(\{\alpha_i+t\delta|t\in \mathbb{Z}\}\cup\{-\alpha_i+t\delta|t\in \mathbb{Z}\})$) is completely contained in $\cone(H).$ Therefore $H$ is not convex.
\end{proof}

Now by Theorem \ref{infiniteword} we immediately obtain

\begin{corollary}\label{twotypes}
For an irreducible affine Weyl group $\widetilde{W}$, any hemispace of the oriented matroid $(\widetilde{\Phi},-,\mathrm{cone}_{\widetilde{\Phi}})$ is of the form $N(w)\uplus -(\Phi^{\wedge}\backslash N(w))$ or $-N(w)\uplus (\Phi^{\wedge}\backslash N(w))$ where $w\in \widetilde{W}$ or is an infinite reduced word.
\end{corollary}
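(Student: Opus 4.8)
The plan is to read the corollary off Proposition~\ref{hemispacecoincide}, Theorem~\ref{infiniteword}, and the classification of $2$-closure biclosed sets of $\widetilde{\Phi}^{+}=\Phi^{\wedge}$. Any hemispace $H$ of $(\widetilde{\Phi},-,cx)$ is in particular a $2$-closure hemispace, so via the bijection $B\mapsto B\uplus-(\Phi^{\wedge}\backslash B)$ recalled above it equals $B\uplus-(\Phi^{\wedge}\backslash B)$ for a unique $2$-closure biclosed set $B$ in $\Phi^{\wedge}$. Write $I_{B}=P(\Psi^{+},\Delta_{1},\Delta_{2})$. Proposition~\ref{hemispacecoincide} says that if both $\Delta_{1}$ and $\Delta_{2}$ were nonempty then $H$ would fail to be convex; hence $\Delta_{1}=\emptyset$ or $\Delta_{2}=\emptyset$, and it suffices to analyse these two cases.

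\emph{Case $\Delta_{2}=\emptyset$.} Then $\Phi_{2}=\mathbb{R}\Delta_{2}\cap\Phi=\emptyset$, so the finite correction term $B_{2}$ is empty and $B$ is a biclosed set with $I_{B}=P(\Psi^{+},\Delta_{1},\emptyset)$; by \S\ref{biclosedaffine} such a $B$ has the form $w\cdot P(\Psi^{+},\Delta_{1},\emptyset)^{\wedge}$ with $w$ in the reflection subgroup generated by $\Delta_{1}^{\wedge}$. If $\Delta_{1}\subsetneq\Delta$, Theorem~\ref{infiniteword} identifies $B$ as the inversion set $N(x)$ of an infinite reduced word $x$. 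If $\Delta_{1}=\Delta$, then $P(\Psi^{+},\Delta,\emptyset)=\Psi^{+}\backslash\Phi=\emptyset$, so $B=w\cdot\emptyset=N(w)$ with $w\in\widetilde{W}$. Either way $H=N(w)\uplus-(\Phi^{\wedge}\backslash N(w))$ with $w$ an element of $\widetilde{W}$ or an infinite reduced word, which is the first form.

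\emph{Case $\Delta_{1}=\emptyset$.} Here I would pass to the complement $B':=\Phi^{\wedge}\backslash B$. By Lemma~5.10 of \cite{DyerReflOrder} each ray $\{\alpha\}^{\wedge}$ meets $B$ cofinitely when $\alpha\in I_{B}$ and finitely when $\alpha\notin I_{B}$, so $I_{B'}=\Phi\backslash I_{B}$. A direct computation gives $\Phi\backslash P(\Psi^{+},\emptyset,\Delta_{2})=\Psi^{-}\backslash\Phi_{\Delta_{2}}=P(\Psi^{-},-\Delta_{2},\emptyset)$, where $\Psi^{-}$ is viewed as a positive system with simple system $-\Delta$. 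Thus $I_{B'}$ has the form treated in the previous case, so $B'=N(w)$ for some $w\in\widetilde{W}$ or an infinite reduced word, whence $B=N(w)'=\Phi^{\wedge}\backslash N(w)$ and $H=N(w)'\uplus-N(w)=-N(w)\uplus(\Phi^{\wedge}\backslash N(w))$, which is the second form. Since one of the two cases always holds, the corollary follows.

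The proof is mainly an assembly of earlier results, so I do not expect a real obstacle — this is why the paper can claim it follows immediately. The point needing the most care is the case $\Delta_{1}=\emptyset$: one must pass correctly to the complementary biclosed set, invoke Lemma~5.10 of \cite{DyerReflOrder} to get $I_{B'}=\Phi\backslash I_{B}$, and recognise $\Phi\backslash P(\Psi^{+},\emptyset,\Delta_{2})$ as a set of the form $P(\Psi^{-},-\Delta_{2},\emptyset)$. One should also track the two degenerate subcases $\Delta_{1}=\Delta$ and $\Delta_{2}=\Delta$, which produce respectively finite and cofinite biclosed sets $B$, and hence inversion sets of elements of $\widetilde{W}$.
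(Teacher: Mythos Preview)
Your proposal is correct and is precisely the argument the paper intends: the paper's proof is literally the single sentence ``Now by Theorem~\ref{infiniteword} we immediately obtain'', and what you have written is the natural unpacking of that sentence using Proposition~\ref{hemispacecoincide}(2) to force $\Delta_1=\emptyset$ or $\Delta_2=\emptyset$, Theorem~\ref{infiniteword} to identify the $\Delta_2=\emptyset$ case with inversion sets, and passage to the complement for the $\Delta_1=\emptyset$ case. The only cosmetic point is that the identity $I_{B'}=\Phi\setminus I_B$ does not really require Lemma~5.10 of \cite{DyerReflOrder}: it is immediate from the definition of $I_B$ once one knows (as stated in \S\ref{biclosedaffine}) that each $\{\alpha\}^{\wedge}\cap B$ is either finite or cofinite.
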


\begin{remark}
Let $(W,S)$ be a rank 3 universal Coxeter system. The three simple roots are denoted by $\alpha_1,\alpha_2,\alpha_3.$ Then $B:=\{\alpha\in \Phi^+|\alpha=k_1\alpha_1+k_2\alpha_2\}$ is a biclosed set in $\Phi^+$. One easily sees that $H=B\uplus -(\Phi^+\backslash B)=\{\alpha\in \Phi^+|\alpha=k_1\alpha_1+k_2\alpha_2\}\uplus \{\alpha\in \Phi^-|\alpha=k_1\alpha_1+k_2\alpha_2+k_3\alpha_3, k_3<0\}$ is a matroidal hemispace of the oriented matroid $(\Phi,-,cone_{\Phi})$. However in this case neither $B$ nor $\Phi^+\backslash B$ is an inversion set.
\end{remark}

\begin{lemma}\label{torder}
Let $B=P(\Psi^+,\Delta_1,\emptyset)^{\wedge}$ be a biclosed set in $(\Phi)^{\wedge}$.
Let $H=B\uplus -((\Phi)^{\wedge}\backslash B)$ be a hemispace of the oriented matroid  $(\widetilde{\Phi},-,cx)$ and $G$ is an arbitrary hemispace of $(\widetilde{\Phi},-,cx)$.
Then the block containing $H$ of the tope poset based at $G$ is isomorphic to the twisted weak order $(W',G\cap \widetilde{\Phi_1})$ where $W'$ is the reflection subgroup generated by $\widetilde{\Phi_1}$ and $\widetilde{\Phi_1}=\pm(\mathbb{R}\Delta_1)^{\wedge}.$
\end{lemma}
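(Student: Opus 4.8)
The plan is to construct an explicit order isomorphism $\phi\colon (W',\le'_{B'})\longrightarrow(\text{block of }H)$, where $B'$ is the biclosed set $G\cap\widetilde{\Phi_1}^+$ of $\widetilde{\Phi_1}^+(=\Phi_1^{\wedge}$, $\Phi_1=\mathbb{R}\Delta_1\cap\Phi)$ attached to the hemispace $G\cap\widetilde{\Phi_1}$ of the restricted oriented matroid $(\widetilde{\Phi_1},-,cx)$, so that $(W',G\cap\widetilde{\Phi_1})$ means $(W',\le'_{B'})$. If $\Delta_1$ is the whole simple system of $\Psi^+$ then $B=\emptyset$, $\widetilde{\Phi_1}=\widetilde\Phi$, $W'=\widetilde W$, and the claim is the standard description of the block at $-\widetilde\Phi^+$ by the ordinary twisted weak order; so assume otherwise and, by Theorem~\ref{infiniteword}, fix an infinite reduced word $x$ with $N(x)=B$. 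The first observation is that $N(x)=P(\Psi^+,\Delta_1,\emptyset)^{\wedge}=\{\eta\in\widetilde\Phi^+:\text{the }\Phi\text{-component of }\eta\text{ lies in }\Psi^+\setminus\Phi_1\}$; in particular $N(x)$ meets $\widetilde{\Phi_1}$ trivially, so $N(x)\subseteq\widetilde\Phi^+\setminus\widetilde{\Phi_1}^+$. For a hemispace $F$ put $B_F=F\cap\widetilde\Phi^+$; then $F\,\Delta\,H$ is finite iff $B_F\,\Delta\,N(x)$ is finite, which (by the classification recalled in \S\ref{biclosedaffine}: a biclosed set with $I_{B_F}=P(\Psi^+,\Delta_1,\emptyset)$ is exactly a $w\cdot N(x)$ with $w\in W'$) together with Corollary~\ref{twotypes} means $B_F=w\cdot N(x)=N(wx)$ for some $w\in W'$. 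Thus the block of $H$ is $\{\,$hemispace with $B_F=N(wx):w\in W'\,\}$, and we let $\phi(w)$ be the hemispace with $B_{\phi(w)}=N(wx)$.

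The structural heart of the argument is the identity, for every $w\in W'$,
\[
N(wx)=N(x)\ \uplus\ N_{W'}(w),
\]
where $N_{W'}(w)\subseteq\widetilde{\Phi_1}^+$ is the inversion set of $w$ computed in the reflection subgroup $W'$, the union being disjoint since $N(x)\cap\widetilde{\Phi_1}^+=\emptyset$. It has two halves. For the $\widetilde{\Phi_1}^+$‑part, intersect $N(wx)=w\cdot N(x)=(N(w)\setminus w(-N(x)))\cup(wN(x)\setminus -N(w))$ with $\widetilde{\Phi_1}^+$: as $w\in W'$ stabilizes the subsystem $\widetilde{\Phi_1}$ setwise and $N(x)$ contains no root of $\widetilde{\Phi_1}$, both $wN(x)$ and $w(-N(x))$ are disjoint from $\widetilde{\Phi_1}$, so $N(wx)\cap\widetilde{\Phi_1}^+=N(w)\cap\widetilde{\Phi_1}^+=N_{W'}(w)$ by \cite{DyerReflSubgrp}. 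For the complementary part one shows $N(wx)$ and $N(x)$ agree on $\widetilde\Phi^+\setminus\widetilde{\Phi_1}^+$ by induction on $l_{W'}(w)$: writing $w=s_\sigma w'$ with $\sigma$ a simple root of $W'$ and $l_{W'}(w)=l_{W'}(w')+1$, and using $N(wx)=s_\sigma\cdot N(w'x)$, it suffices to verify that for each $\eta\in\widetilde\Phi^+\setminus\widetilde{\Phi_1}^+$ one has $\eta\in s_\sigma\cdot C\iff\eta\in C$, where $C=N(w'x)$, which by induction coincides with $N(x)$ outside $\widetilde{\Phi_1}^+$. Unwinding the reflection dot action, for $\eta\in\widetilde\Phi^+$ one has $\eta\in s_\sigma\cdot C$ iff $s_\sigma(\eta)\in C$, or else $\eta\in N(s_\sigma)$ and $-s_\sigma(\eta)\notin C$; here $s_\sigma(\eta)$ and $-s_\sigma(\eta)$ again lie outside $\widetilde{\Phi_1}$, and since $\pi(s_\sigma)\in W_{\Delta_1}$ (the Weyl group of $\Phi_1$) preserves each of $\Psi^+\setminus\Phi_1$, $\Psi^-\setminus\Phi_1$ and $\Phi_1$, while $\widetilde\Phi=(\Psi^+\setminus\Phi_1)^{\wedge}\uplus(\Psi^-\setminus\Phi_1)^{\wedge}\uplus\widetilde{\Phi_1}\uplus(-)$, the $\Phi$‑component of $\eta$ lies in $\Psi^+\setminus\Phi_1$ iff that of $s_\sigma(\eta)$ does, and iff that of $-s_\sigma(\eta)$ does \emph{not}; comparing with the description of $N(x)$ above gives the required equivalence in both cases (whether or not $\eta\in N(s_\sigma)$). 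I expect this second half to be the main obstacle: $s_\sigma$ is in general a non‑simple reflection of $\widetilde W$ with a large inversion set, and the delicate point is that the \emph{net} discrepancy it creates is entirely absorbed inside $\widetilde{\Phi_1}^+$; the degenerate extremes ($\Delta_1=\emptyset$, giving a one‑point block and trivial $W'$) are covered by the same reasoning.

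Granting the identity, $\phi$ is a bijection: surjectivity is the block description, and injectivity holds because $B_{\phi(w)}=N(wx)$ determines $N_{W'}(w)=N(wx)\cap\widetilde{\Phi_1}^+$ and hence $w$. For the order, $\phi(w)\le\phi(w')$ in the tope poset based at $G$ means $N(wx)\,\Delta\,B_G\subseteq N(w'x)\,\Delta\,B_G$. Writing $B_G=B'\uplus B_G''$ with $B'=B_G\cap\widetilde{\Phi_1}^+$ and $B_G''=B_G\setminus\widetilde{\Phi_1}^+$, and using $N(wx)=N(x)\uplus N_{W'}(w)$ with $N(x)$ inside $\widetilde\Phi^+\setminus\widetilde{\Phi_1}^+$ and $N_{W'}(w)$ inside $\widetilde{\Phi_1}^+$, we obtain
\[
N(wx)\,\Delta\,B_G=\bigl(N_{W'}(w)\,\Delta\,B'\bigr)\ \uplus\ \bigl(N(x)\,\Delta\,B_G''\bigr),
\]
whose second summand is independent of $w$. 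Hence $\phi(w)\le\phi(w')$ iff $N_{W'}(w)\,\Delta\,B'\subseteq N_{W'}(w')\,\Delta\,B'$, and an elementary manipulation of symmetric differences shows this last condition is precisely $N_{W'}(w)\setminus N_{W'}(w')\subseteq B'$ together with $N_{W'}(w')\setminus N_{W'}(w)\subseteq\widetilde{\Phi_1}^+\setminus B'$, i.e. $w\le'_{B'}w'$ in $W'$. Therefore $\phi$ is an isomorphism of posets from $(W',\le'_{B'})=(W',G\cap\widetilde{\Phi_1})$ onto the block of $H$, as asserted.
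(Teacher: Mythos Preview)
Your proof is correct and follows essentially the same architecture as the paper's: identify the block of $H$ with $\{wH : w\in W'\}$, establish the decomposition $w\cdot B=B\uplus N_{W'}(w)$ (equivalently $N(wx)=N(x)\uplus N_{W'}(w)$), and then translate the tope-poset inclusion $\phi(w)\,\Delta\,G\subseteq\phi(w')\,\Delta\,G$ into the defining inequalities of $\le'_{B'}$ via a symmetric-difference split along $\widetilde{\Phi_1}^+$ and its complement.

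The only substantive difference is in how the key decomposition is obtained. The paper simply invokes Proposition~5.11 of \cite{DyerReflOrder} for the identity $w\cdot P(\Psi^+,\Delta_1,\emptyset)^{\wedge}=P(\Psi^+,\Delta_1,\emptyset)^{\wedge}\uplus(N(w)\cap\widetilde{\Phi_1})$, whereas you give a direct, self-contained proof: the $\widetilde{\Phi_1}^+$-part via the dot-action formula and the reflection-subgroup identification $N(w)\cap\widetilde{\Phi_1}^+=N_{W'}(w)$, and the complementary part by an induction on $l_{W'}(w)$ using that $\pi(s_\sigma)\in W_{\Delta_1}$ stabilises both $\Psi^+\setminus\Phi_1$ and $\Psi^-\setminus\Phi_1$. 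This buys you independence from \cite{DyerReflOrder} at the price of a short but genuine computation; the paper's citation is shorter but relies on external machinery. Your order-preservation step (the symmetric-difference manipulation reducing to $N_{W'}(w)\setminus N_{W'}(w')\subseteq B'$ and $N_{W'}(w')\setminus N_{W'}(w)\subseteq\widetilde{\Phi_1}^+\setminus B'$) is the same in content as the paper's pair of containments, just phrased more symmetrically. The appeal to Corollary~\ref{twotypes} is harmless but not strictly needed: once $F$ is a hemispace, $B_F=F\cap\widetilde\Phi^+$ is automatically biclosed, and finiteness of $B_F\,\Delta\,B$ forces $I_{B_F}=P(\Psi^+,\Delta_1,\emptyset)$, after which the classification in \S\ref{biclosedaffine} already yields $B_F=w\cdot B$ with $w\in W'$.
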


\begin{proof}
First note that since $G$ is a hemispace in the oriented matroid, it is biclosed in $\widetilde{\Phi}$. Therefore $G\cap \widetilde{\Phi_1}$ is indeed a biclosed set in $\widetilde{\Phi_1}.$
Let $H_1=B_1\uplus -(\Phi^{\wedge}\backslash B_1)$ be another hemispace. If $H$ and $H_1$ are in the same block, then clearly the symmetric difference of $B$ and $B_1$ is finite.

By Proposition 5.11 and Corollary 5.12 of \cite{DyerReflOrder} (See also Theorem 1.3 of \cite{wang}), $B_1=w\cdot B$ for some unique $w\in W'$. Then $H_1=w\cdot B\uplus -(\Phi^{\wedge}\backslash w\cdot B)=wH$ by \cite{BPK} 3.2(e).
Now define the map from the block containing $H$ of the tope poset based at $G$ to $(W',G\cap \widetilde{\Phi_1})$ by $wH\mapsto w$.

We must show that such a map and its inverse preserve the orders.
Take $w_1H, w_2H$ such that $w_1H<w_2H$ in the block containing $H$ of the tope poset based at $G$.
Note that $w_iH=w_i\cdot P(\Psi^+,\Delta_1,\emptyset)^{\wedge} \uplus -((\Phi)^{\wedge}\backslash w_i\cdot P(\Psi^+,\Delta_1,\emptyset)^{\wedge}), i=1,2.$ by \cite{BPK} 3.2(e).
Further one has that $w_i\cdot P(\Psi^+,\Delta_1,\emptyset)^{\wedge}=P(\Psi^+,\Delta_1,\emptyset)^{\wedge}\uplus (N(w_i)\cap \widetilde{\Phi_1})$ by Proposition 5.11  of \cite{DyerReflOrder}.
Hence $w_iH=H\uplus (N(w_i)\cap \widetilde{\Phi_1})\backslash -((N(w_i)\cap \widetilde{\Phi_1}))$. Then $w_1H\Delta G\subset w_2H\Delta G$ if and if the following two containments hold:
$$(N(w_1)\cap \widetilde{\Phi_1})\backslash G\subset (N(w_2)\cap \widetilde{\Phi_1})\backslash G,$$
$$(N(w_2)\cap \widetilde{\Phi_1})\cap G\subset (N(w_1)\cap \widetilde{\Phi_1})\cap G.$$
%$$-(N(w_2)\cap \widetilde{\Phi_1})\backslash G\subset -(N(w_1)\cap \widetilde{\Phi_1})\backslash G,$$
%$$-(N(w_1)\cap \widetilde{\Phi_1})\cap G\subset -(N(w_2)\cap \widetilde{\Phi_1})\cap G.$$
One sees that these two containments are equivalent to the conditions $(N(w_1)\cap \widetilde{\Phi_1})\backslash (N(w_2)\cap \widetilde{\Phi_1})\subset G$ and $(N(w_2)\cap \widetilde{\Phi_1})\backslash (N(w_1)\cap \widetilde{\Phi_1})\subset -G$, i.e. $w_1\leq_{G\cap \widetilde{\Phi_1}} w_2$.
\end{proof}

\begin{corollary}\label{block}
Let $P$ be a block of a tope poset of $(\widetilde{\Phi},-,cx)$. Then $P$ is isomorphic to the twisted weak order on some Coxeter group $W'$.
\end{corollary}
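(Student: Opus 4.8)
The plan is to reduce an arbitrary block to the situation already settled in Lemma~\ref{torder}. The two ingredients are Corollary~\ref{twotypes}, which pins down the shape of all hemispaces of $(\widetilde\Phi,-,cx)$, and the classification of biclosed sets in $\widetilde\Phi^{+}$ recalled in Subsection~\ref{biclosedaffine}, which lets me put the biclosed set underlying a hemispace into a normal form. I would begin with an elementary symmetry: since $(-F)\,\Delta\,(-G)=-(F\,\Delta\,G)$ for the involution $F\mapsto -F$ on hemispaces, this involution carries the tope poset based at $G$ isomorphically onto the tope poset based at $-G$ and sends blocks to blocks. Hence, given a block $P$ of the tope poset based at a hemispace $G$ and a choice of $H\in P$, after possibly replacing $(P,G,H)$ by $(-P,-G,-H)$ I may assume by Corollary~\ref{twotypes} that $H=B\uplus -(\widetilde\Phi^{+}\backslash B)$, where $B=N(w)$ for some $w\in\widetilde W$ or for some infinite reduced word $w$.

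Next I would identify $B$ inside the classification. If $w\in\widetilde W$, then $B=N(w)$ is finite and $B=w\cdot P(\Phi^{+},\Delta,\emptyset)^{\wedge}$ because $P(\Phi^{+},\Delta,\emptyset)=\emptyset$; here $w$ lies in the reflection subgroup of $\widetilde W$ generated by $\widetilde\Phi_{1}:=\pm(\mathbb{R}\Delta)^{\wedge}=\widetilde\Phi$, which is all of $\widetilde W$. If $w$ is an infinite reduced word, then $A_{B}\cap -A_{B}=\emptyset$ by Subsection~\ref{biclosedaffine}; since $I_{B}\subseteq A_{B}$ and $\Phi_{\Delta_{2}}\subseteq P(\Psi^{+},\Delta_{1},\Delta_{2})$ is stable under negation, the ``$\Delta_{2}$-component'' of the biclosed set $I_{B}$ must be empty, so $I_{B}=P(\Psi^{+},\Delta_{1},\emptyset)$ for a positive system $\Psi^{+}$ and a subset $\Delta_{1}\subsetneq\Delta$ (properness of $\Delta_{1}$ being guaranteed by Theorem~\ref{infiniteword}). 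The classification of Subsection~\ref{biclosedaffine} then yields $B=w'\cdot P(\Psi^{+},\Delta_{1},\emptyset)^{\wedge}$ with $w'$ in the reflection subgroup $W'$ generated by $\widetilde\Phi_{1}:=\pm(\mathbb{R}\Delta_{1})^{\wedge}$. In either case we have expressed $B=w'\cdot B_{0}$ with $B_{0}=P(\Psi^{+},\Delta_{1},\emptyset)^{\wedge}$ and $w'\in W'$.

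Finally I would put $H_{0}:=B_{0}\uplus -(\widetilde\Phi^{+}\backslash B_{0})$, which is a hemispace by Proposition~\ref{hemispacecoincide}(1). Since $w'\in W'$, the sets $B$ and $B_{0}$ differ by only finitely many roots (Subsection~\ref{biclosedaffine}); hence $H$ and $H_{0}$ differ by finitely many roots and so lie in the same block, i.e. $H_{0}\in P$. Applying Lemma~\ref{torder} with $H_{0}$ as the base hemispace shows that $P$ is isomorphic to the twisted weak order $(W',G\cap\widetilde\Phi_{1})$; and since $G$ is biclosed in $\widetilde\Phi$, the set $G\cap\widetilde\Phi_{1}$ is biclosed in $\widetilde\Phi_{1}$, so $(W',G\cap\widetilde\Phi_{1})$ is a twisted weak order on the Coxeter group $W'$. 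This proves the corollary.

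I expect the delicate point --- rather than the formal manipulations, which are routine --- to be the verification that Corollary~\ref{twotypes}, Theorem~\ref{infiniteword} and the classification of biclosed sets together truly exhaust all hemispaces and always normalize $B$ to $w'\cdot P(\Psi^{+},\Delta_{1},\emptyset)^{\wedge}$ with $w'$ in the appropriate reflection subgroup; in particular that the $\Delta_{2}$-component of the underlying biclosed set is forced to vanish for cone-hemispaces, which is precisely what Proposition~\ref{hemispacecoincide}(2) says and what the identity $A_{B}\cap -A_{B}=\emptyset$ encodes at the level of inversion sets.
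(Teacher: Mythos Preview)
Your argument is correct and follows the same overall plan as the paper: invoke Corollary~\ref{twotypes} to identify a representative hemispace in the block, handle the ``negative'' type by the negation symmetry, and reduce the ``positive'' type to the standard hemispace $H_{0}=P(\Psi^{+},\Delta_{1},\emptyset)^{\wedge}\uplus -(\cdots)$ so that Lemma~\ref{torder} applies. The one genuine difference lies in how that last reduction is carried out. The paper writes $N(w)=u\cdot P(\Psi^{+},\Delta_{1},\emptyset)^{\wedge}$ with $u\in\widetilde W$ arbitrary and then uses the $\widetilde W$-action $F\mapsto u^{-1}F$, which transports the whole tope poset (base included) isomorphically, to bring the chosen hemispace to $H_{0}$. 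You instead exploit the sharper classification from Subsection~\ref{biclosedaffine}: since $I_{N(w)}=P(\Psi^{+},\Delta_{1},\emptyset)$ (with $\Delta_{2}=\emptyset$ forced by $A_{N(w)}\cap -A_{N(w)}=\emptyset$), the acting element $w'$ can be taken in the small reflection subgroup $W'$, so $N(w)$ and $P(\Psi^{+},\Delta_{1},\emptyset)^{\wedge}$ already differ by finitely many roots and $H_{0}$ lies in the \emph{same} block $P$ --- no transport is needed. Your route is a little more direct and stays inside a single tope poset; the paper's route is more structural, relying only on the group action rather than on the finer statement that $w'\in W'$. Both are valid and short.
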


\begin{proof}
By Corollary \ref{twotypes}, there are two possible types of the base tope (hemisapce).

Case I: The base hemispace is of the form $N(w)\uplus -(\widehat{\Phi}\backslash N(w))$ where $w\in \widetilde{W}$ or $w\in \widetilde{W}_l$. Then by Theorem  \ref{infiniteword}, $N(w)=u\cdot P(\Psi^+,\Delta_1,\emptyset)^{\wedge}$ where $u\in \widetilde{W}$. (Note if $w\in \widetilde{W}$, then $\Delta_1=\Delta$ and $P(\Psi^+,\Delta_1,\emptyset)^{\wedge}=\emptyset.$) Suppose that $u=e$, the assertion follows from Lemma \ref{torder}.

Now assume that $u\neq e$. Note that the map $H\mapsto uH$ is a bijection from the set of hemispaces to itself. One easily sees  that the tope poset based at $H$ is isomorphic to the tope poset based at $wH$ by noting $G_1\Delta H\subset G_2\Delta H\Leftrightarrow uG_1\Delta uH\subset uG_2\Delta uH$. Therefore the assertion also holds in this situation.

Case II. The base hemispace is of the form $-N(w)\uplus (\widehat{\Phi}\backslash N(w))$ where $w\in \widetilde{W}$ or $w\in \widetilde{W}_l$. One easily sees that such a tope poset is the opposite poset of the tope poset based at $N(w)\uplus -(\widehat{\Phi}\backslash N(w))$. Since the opposite poset of the twisted weak order $(W,\leq_B')$ is isomorphic to $(W,\leq_{\Phi^+\backslash B'})$, the assertion also holds in this case.
\end{proof}

\noindent
\emph{Proof of Theorem \ref{intervallattice}}. By Corollary \ref{block}, $[H_1, H_2]$ is isomorphic to a closed interval of the twisted weak order of some Coxeter group $W'$. By Corollary 4.2 of \cite{orderpaper}, it is also isomorphic to a closed interval of the ordinary weak order of $W'$. Since the ordinary weak order $(W',\leq_{\emptyset}')$ is a complete meet semilattice, such an interval is a lattice.

\begin{remark}
Note that the example in Section 4 of \cite{wang} shows that in general  the tope poset of the oriented matroid coming from an affine root system is not a lattice.
\end{remark}

To end this section, we sketch (part of) the Hasse diagram of a specific tope poset of $\widetilde{\Phi}$ of type $\widetilde{A}_2$ with the base hemispace $\widetilde{\Phi}^+$. This particular tope poset is isomorphic to the poset of 2 closure biclosed sets in $\widetilde{\Phi}^+$ (but in general there is no such isomorphism by our results above) and thus is a lattice by \cite{wang}. The two simple roots in $\Phi^+$ are denoted by $\alpha,\beta.$

\begin{tikzpicture}[scale=.15]
\tiny
  \node (a) at (0,0) {$H_1$};
   \node (b) at (-12,4) {$H_2$};
  \node (c) at (0,4) {$H_3$};
 \node (d) at (16,4) {$H_4$};
 \node (e) at (-20,8) {$H_5$};
 \node (f) at (-10,8) {$H_6$};
 \node (g) at (-2,8) {$H_7$};
 \node (h) at  (3,8) {$H_8$};
 \node (i) at (10,8)  {$H_9$};
 \node (j) at (20,8)  {$H_{10}$};
 \node (k)  at (-20,12) {$H_{11}$};
 \node (l) at (-14,12)   {$H_{12}$};
 \node (m) at (-8,12)   {$H_{13}$};
 \node  (n) at (-2,12)  {$H_{14}$};
 \node  (o)  at (4,12)  {$H_{15}$};
 \node  (p)  at (10,12)  {$H_{16}$};
 \node  (q)  at (16,12)  {$H_{17}$};
 \node  (r)  at (22,12)  {$H_{18}$};
 \node  (s)  at (28,12)  {$H_{19}$};
 \node  (t)  at (0,16)  {$\cdots\cdots\cdots$};
 \node  (u)  at  (-17,20)   {$T_1$};
 \node  (v)  at  (-6,20)   {$T_2$};
 \node  (w)  at  (4,20)   {$T_3$};
 \node  (x)  at  (12,20)   {$T_4$};
  \node  (y)  at  (22,20)   {$T_5$};
  \node  (z)  at  (31,20)   {$T_6$};
  \node  (aa)  at  (-20,24)   {$T_{11}$};
   \node  (ab)  at  (-14,24)   {$T_{12}$};
   \node  (ac)  at  (-20,28)   {$T_{13}$};
   \node  (ad)  at  (-14,28)   {$T_{14}$};
    \node  (ae)  at  (-10,24)   {$T_{21}$};
   \node  (af)  at  (-3,24)   {$T_{22}$};
   \node  (ag)  at  (-10,28)   {$T_{23}$};
   \node  (ah)  at  (-3,28)   {$T_{24}$};
   \node  (ai)  at  (1,24)   {$T_{31}$};
   \node  (aj)  at  (6,24)   {$T_{32}$};
   \node  (ak)  at  (1,28)   {$T_{33}$};
   \node  (al)  at  (6,28)   {$T_{34}$};
   \node  (am)  at  (10,24)   {$T_{41}$};
   \node  (an)  at  (15,24)   {$T_{42}$};
   \node  (ao)  at  (10,28)   {$T_{43}$};
   \node  (ap)  at  (15,28)   {$T_{44}$};
   \node  (aq)  at  (19,24)   {$T_{51}$};
   \node  (ar)  at  (24,24)   {$T_{52}$};
   \node  (as)  at  (19,28)   {$T_{53}$};
   \node  (at)  at  (24,28)   {$T_{54}$};
    \node  (au)  at  (28,24)   {$T_{61}$};
   \node  (av)  at  (33,24)   {$T_{62}$};
   \node  (aw)  at  (28,28)   {$T_{63}$};
   \node  (ax)  at  (33,28)   {$T_{64}$};
   \node  (ay)  at (0,32)  {$\cdots\cdots\cdots$};
   \node  (az)  at (-17,36)  {$U_1$};
   \node   (ba) at (-6,36)      {$U_2$};
   \node   (bb) at (4,36)      {$U_3$};
   \node   (bc) at (12,36)      {$U_4$};
   \node   (bd) at (20,36)      {$U_5$};
   \node   (be) at (27,36)      {$U_6$};
   \node   (bf) at (0,40)      {$\cdots\cdots\cdots$};
   \node  (bg)  at  (-20,48)   {$-T_{11}$};
   \node  (bh)  at  (-14,48)   {$-T_{12}$};
   \node  (bi)  at  (-20,44)   {$-T_{13}$};
   \node  (bj)  at  (-14,44)   {$-T_{14}$};
   \node  (bk)  at  (-10,48)   {$-T_{21}$};
   \node  (bl)  at  (-3,48)   {$-T_{22}$};
   \node  (bm)  at  (-10,44)   {$-T_{23}$};
   \node  (bn)  at  (-3,44)   {$-T_{24}$};
   \node  (bo)  at  (1,48)   {$-T_{31}$};
   \node  (bp)  at  (6,48)   {$-T_{32}$};
   \node  (bq)  at  (1,44)   {$-T_{33}$};
   \node  (br)  at  (6,44)   {$-T_{34}$};
   \node  (bs)  at  (10,48)   {$-T_{41}$};
   \node  (bt)  at  (15,48)   {$-T_{42}$};
   \node  (bu)  at  (10,44)   {$-T_{43}$};
   \node  (bv)  at  (15,44)   {$-T_{44}$};
   \node  (bw)  at  (19,48)   {$-T_{51}$};
   \node  (bx)  at  (24,48)   {$-T_{52}$};
   \node  (by)  at  (19,44)   {$-T_{53}$};
   \node  (bz)  at  (24,44)   {$-T_{54}$};
    \node  (ca)  at  (28,48)   {$-T_{61}$};
   \node  (cb)  at  (33,48)   {$-T_{62}$};
   \node  (cc)  at  (28,44)   {$-T_{63}$};
   \node  (cd)  at  (33,44)   {$-T_{64}$};
   \node  (uu)  at  (-17,52)   {$-T_1$};
 \node  (vv)  at  (-6,52)   {$-T_2$};
 \node  (ww)  at  (4,52)   {$-T_3$};
 \node  (xx)  at  (12,52)   {$-T_4$};
  \node  (yy)  at  (22,52)   {$-T_5$};
  \node  (zz)  at  (31,52)   {$-T_6$};
  \node (dots) at (0,56) {$\cdots\cdots\cdots$};
    \node (aaa) at (0,72) {$-H_1$};
   \node (bbb) at (-12,68) {$-H_2$};
  \node (ccc) at (0,68) {$-H_3$};
 \node (ddd) at (16,68) {$-H_4$};
 \node (eee) at (-20,64) {$-H_5$};
 \node (fff) at (-10,64) {$-H_6$};
 \node (ggg) at (-2,64) {$-H_7$};
 \node (hhh) at  (3,64) {$-H_8$};
 \node (iii) at (10,64)  {$-H_9$};
 \node (jjj) at (20,64)  {$-H_{10}$};
 \node (kkk)  at (-20,60) {$-H_{11}$};
 \node (lll) at (-14,60)   {$-H_{12}$};
 \node (mmm) at (-8,60)   {$-H_{13}$};
 \node  (nnn) at (-2,60)  {$-H_{14}$};
 \node  (ooo)  at (4,60)  {$-H_{15}$};
 \node  (ppp)  at (10,60)  {$-H_{16}$};
 \node  (qqq)  at (16,60)  {$-H_{17}$};
 \node  (rrr)  at (22,60)  {$-H_{18}$};
 \node  (sss)  at (28,60)  {$-H_{19}$};
   \draw (ccc) -- (aaa);
  \draw (bbb) -- (aaa);
  \draw (ddd) -- (aaa);
  \draw (eee)--(bbb);
  \draw (fff)--(ccc);
  \draw (ggg)--(bbb);
  \draw (hhh)--(ddd);
  \draw (iii)--(ccc);
  \draw  (jjj)--(ddd);
  \draw (kkk)--(eee);
  \draw  (kkk)--(fff);
  \draw  (lll)--(eee);
  \draw (mmm)--(fff);
  \draw  (nnn)--(ggg);
  \draw  (ooo)--(ggg);
  \draw  (ooo)--(hhh);
  \draw  (ppp)--(hhh);
  \draw  (qqq)--(iii);
  \draw   (rrr)--(iii);
  \draw   (rrr)--(jjj);
  \draw   (sss)--(jjj);

  \draw  (ca)--(zz);
  \draw   (cb)--(zz);
  \draw  (bw)--(yy);
  \draw   (bx)--(yy);
  \draw  (bs)--(xx);
  \draw  (bt)--(xx);
  \draw  (bo)--(ww);
  \draw  (bp)--(ww);
  \draw  (bk)--(vv);
  \draw  (bl)--(vv);
  \draw (bg)--(uu);
  \draw  (bh)--(uu);
   \draw  (by)--(bw);
   \draw  (bz)--(bx);
   \draw  (cc)--(ca);
   \draw  (cd)--(cb);
   \draw (bu)--(bs);
   \draw (bv)--(bt);
   \draw  (bq)--(bo);
   \draw  (br)--(bp);
   \draw (bm)--(bk);
   \draw  (bn)--(bl);
   \draw (bi)--(bg);
   \draw (bj)--(bh);
  \draw (c) -- (a);
  \draw (b) -- (a);
  \draw (d) -- (a);
  \draw (e)--(b);
  \draw (f)--(c);
  \draw (g)--(b);
  \draw (h)--(d);
  \draw (i)--(c);
  \draw  (j)--(d);
  \draw (k)--(e);
  \draw  (k)--(f);
  \draw  (l)--(e);
  \draw (m)--(f);
  \draw  (n)--(g);
  \draw  (o)--(g);
  \draw  (o)--(h);
  \draw  (p)--(h);
  \draw  (q)--(i);
  \draw   (r)--(i);
  \draw   (r)--(j);
  \draw   (s)--(j);
  \draw  (aa)--(u);
  \draw   (ab)--(u);
  \draw  (ac)--(aa);
  \draw  (ad)--(ab);
  \draw  (ae)--(v);
  \draw  (af)--(v);
  \draw  (ag)--(ae);
  \draw  (ah)--(af);
  \draw  (ai)--(w);
  \draw  (aj)--(w);
  \draw  (ak)--(ai);
  \draw   (al)--(aj);
  \draw  (am)--(x);
  \draw   (an)--(x);
  \draw  (ao)--(am);
  \draw  (ap)--(an);
  \draw   (aq)--(y);
  \draw   (ar)--(y);
  \draw   (as)--(aq);
  \draw   (at)--(ar);
  \draw   (au)--(z);
  \draw   (av)--(z);
  \draw    (aw)--(au);
  \draw    (ax)--(av);

\end{tikzpicture}

\tiny
$H_1=-\widehat{\Phi};$

$H_2=\{\alpha\}\uplus -(\widehat{\Phi}\backslash \{\alpha\})$;

$H_3=\{\beta\}\uplus -(\widehat{\Phi}\backslash \{\beta\});$

$H_4=\{\delta-\alpha-\beta\}\uplus -(\widehat{\Phi}\backslash \{\delta-\alpha-\beta\})$;

$H_5=\{\alpha, \alpha+\beta\}\uplus -(\widehat{\Phi}\backslash \{\alpha,\alpha+\beta\})$;

$H_6=\{\beta, \alpha+\beta\}\uplus -(\widehat{\Phi}\backslash \{\beta,\alpha+\beta\});$

$H_7=\{\alpha, \delta-\beta\}\uplus -(\widehat{\Phi}\backslash \{\alpha,\delta-\beta\})$;

$H_8=\{\delta-\alpha-\beta,\delta-\beta\}\uplus -(\widehat{\Phi}\backslash \{\delta-\alpha-\beta,\delta-\beta\})$;

$H_9=\{\beta, \delta-\alpha\}\uplus -(\widehat{\Phi}\backslash \{\beta,\delta-\alpha\})$;

$H_{10}=\{\delta-\alpha-\beta,\delta-\alpha\}\uplus -(\widehat{\Phi}\backslash \{\delta-\alpha-\beta,\delta-\alpha\})$;

$H_{11}=\{\alpha, \alpha+\beta, \beta\}\uplus -(\widehat{\Phi}\backslash \{\alpha,\alpha+\beta, \beta\})$;

$H_{12}=\{\alpha, \alpha+\beta, \alpha+\delta\}\uplus -(\widehat{\Phi}\backslash \{\alpha,\alpha+\beta, \alpha+\delta\})$;

$H_{13}=\{\beta, \alpha+\beta, \beta+\delta\}\uplus -(\widehat{\Phi}\backslash \{\beta,\alpha+\beta, \beta+\delta\})$;

$H_{14}=\{\alpha, \delta-\beta, \alpha+\delta\}\uplus -(\widehat{\Phi}\backslash \{\alpha,\delta-\beta, \alpha+\delta\})$;

$H_{15}=\{\alpha, \delta-\beta, \delta-\alpha-\delta\}\uplus -(\widehat{\Phi}\backslash \{\alpha,\delta-\beta,\delta-\alpha-\delta\})$;

$H_{16}=\{2\delta-\alpha-\delta, \delta-\beta, \delta-\alpha-\delta\}\uplus -(\widehat{\Phi}\backslash \{2\delta-\alpha-\delta,\delta-\beta,\delta-\alpha-\delta\})$;

$H_{17}=\{\beta,\delta-\alpha, \beta+\delta\}\uplus -(\widehat{\Phi}\backslash \{\beta,\delta-\alpha, \beta+\delta\})$;

$H_{18}=\{\beta,\delta-\alpha, \delta-\beta-\delta\}\uplus -(\widehat{\Phi}\backslash \{\beta,\delta-\alpha, \delta-\beta-\delta\})$;

$H_{19}=\{2\delta-\beta-\delta,\delta-\alpha, \delta-\beta-\delta\}\uplus -(\widehat{\Phi}\backslash \{2\delta-\beta-\delta,\delta-\alpha, \delta-\beta-\delta\})$;

$T_1=\widehat{\alpha}\uplus\widehat{\alpha+\beta}\uplus -(\widehat{\Phi}\backslash (\widehat{\alpha}\uplus\widehat{\alpha+\beta}))$;

$T_2=\widehat{\beta}\uplus\widehat{\alpha+\beta}\uplus -(\widehat{\Phi}\backslash (\widehat{\beta}\uplus\widehat{\alpha+\beta}))$;

$T_3=\widehat{\beta}\uplus\widehat{-\alpha}\uplus -(\widehat{\Phi}\backslash (\widehat{\beta}\uplus\widehat{-\alpha}))$;

$T_4=\widehat{-\beta-\alpha}\uplus\widehat{-\alpha}\uplus -(\widehat{\Phi}\backslash (\widehat{-\beta-\alpha}\uplus\widehat{-\alpha}))$;

$T_5=\widehat{-\beta-\alpha}\uplus\widehat{-\beta}\uplus -(\widehat{\Phi}\backslash (\widehat{-\beta-\alpha}\uplus\widehat{-\beta}))$;

$T_6=\widehat{\alpha}\uplus\widehat{-\beta}\uplus -(\widehat{\Phi}\backslash (\widehat{\alpha}\uplus\widehat{-\beta}))$;

$T_{11}=\widehat{\alpha}\uplus\widehat{\alpha+\beta}\uplus \{\beta\}\uplus -(\widehat{\Phi}\backslash (\widehat{\alpha}\uplus\widehat{\alpha+\beta}\uplus \{\beta\}))$;

$T_{12}=\widehat{\alpha}\uplus\widehat{\alpha+\beta}\uplus \{\delta-\beta\}\uplus -(\widehat{\Phi}\backslash (\widehat{\alpha}\uplus\widehat{\alpha+\beta}\uplus \{\delta-\beta\}))$

$T_{13}=\widehat{\alpha}\uplus\widehat{\alpha+\beta}\uplus \{\beta, \beta+\delta\}\uplus -(\widehat{\Phi}\backslash (\widehat{\alpha}\uplus\widehat{\alpha+\beta}\uplus \{\beta, \beta+\delta\}))$;

$T_{14}=\widehat{\alpha}\uplus\widehat{\alpha+\beta}\uplus \{\delta-\beta, 2\delta-\beta\}\uplus -(\widehat{\Phi}\backslash (\widehat{\alpha}\uplus\widehat{\alpha+\beta}\uplus \{\delta-\beta, 2\delta-\beta\}))$;

$T_{21}=\widehat{\beta}\uplus\widehat{\alpha+\beta}\uplus \{\alpha\}\uplus -(\widehat{\Phi}\backslash (\widehat{\beta}\uplus\widehat{\alpha+\beta}\uplus \{\alpha\}))$;

$T_{22}=\widehat{\beta}\uplus\widehat{\alpha+\beta}\uplus \{\delta-\alpha\}\uplus -(\widehat{\Phi}\backslash (\widehat{\beta}\uplus\widehat{\alpha+\beta}\uplus \{\delta-\alpha\}))$;

$T_{23}=\widehat{\beta}\uplus\widehat{\alpha+\beta}\uplus \{\alpha, \alpha+\delta\}\uplus -(\widehat{\Phi}\backslash (\widehat{\beta}\uplus\widehat{\alpha+\beta}\uplus \{\alpha, \alpha+\delta\}))$;

$T_{24}=\widehat{\beta}\uplus\widehat{\alpha+\beta}\uplus \{\delta-\alpha, 2\delta-\alpha\}\uplus -(\widehat{\Phi}\backslash (\widehat{\beta}\uplus\widehat{\alpha+\beta}\uplus \{\delta-\alpha, 2\delta-\alpha\}))$;

$T_{31}=\widehat{\beta}\uplus\widehat{-\alpha}\uplus \{\alpha+\beta\}\uplus -(\widehat{\Phi}\backslash (\widehat{\beta}\uplus\widehat{-\alpha}\uplus \{\alpha+\beta\}))$;

$T_{32}=\widehat{\beta}\uplus\widehat{-\alpha}\uplus \{\delta-\alpha-\beta\}\uplus -(\widehat{\Phi}\backslash (\widehat{\beta}\uplus\widehat{-\alpha}\uplus \{\delta-\alpha-\beta\}))$;

$T_{33}=\widehat{\beta}\uplus\widehat{-\alpha}\uplus \{\alpha+\beta, \alpha+\beta+\delta\}\uplus -(\widehat{\Phi}\backslash (\widehat{\beta}\uplus\widehat{-\alpha}\uplus \{\alpha+\beta, \alpha+\beta+\delta\}))$;

$T_{34}=\widehat{\beta}\uplus\widehat{-\alpha}\uplus \{\delta-\alpha-\beta, 2\delta-\alpha-\beta\}\uplus -(\widehat{\Phi}\backslash (\widehat{\beta}\uplus\widehat{-\alpha}\uplus \{\delta-\alpha-\beta, 2\delta-\alpha-\beta\}))$;

$T_{41}=\widehat{-\alpha-\beta}\uplus\widehat{-\alpha}\uplus \{\beta\}\uplus -(\widehat{\Phi}\backslash (\widehat{-\alpha-\beta}\uplus\widehat{-\alpha}\uplus \{\beta\}))$;

$T_{42}=\widehat{-\alpha-\beta}\uplus\widehat{-\alpha}\uplus \{\delta-\beta\}\uplus -(\widehat{\Phi}\backslash (\widehat{\-\alpha-\beta}\uplus\widehat{-\alpha}\uplus \{\delta-\beta\}))$;

$T_{43}=\widehat{-\alpha-\beta}\uplus\widehat{-\alpha}\uplus \{\beta, \beta+\delta\}\uplus -(\widehat{\Phi}\backslash (\widehat{-\alpha-\beta}\uplus\widehat{-\alpha}\uplus \{\beta, \beta+\delta\}))$;

$T_{44}=\widehat{-\alpha-\beta}\uplus\widehat{-\alpha}\uplus \{\delta-\beta, 2\delta-\beta\}\uplus -(\widehat{\Phi}\backslash (\widehat{-\alpha-\beta}\uplus\widehat{-\alpha}\uplus \{\delta-\beta, 2\delta-\beta\}))$;

$T_{51}=\widehat{-\alpha-\beta}\uplus\widehat{-\beta}\uplus \{\alpha\}\uplus -(\widehat{\Phi}\backslash (\widehat{-\alpha-\beta}\uplus\widehat{-\beta}\uplus \{\alpha\}))$;

$T_{52}=\widehat{-\alpha-\beta}\uplus\widehat{-\beta}\uplus \{\delta-\alpha\}\uplus -(\widehat{\Phi}\backslash (\widehat{\-\alpha-\beta}\uplus\widehat{-\beta}\uplus \{\delta-\alpha\}))$;

$T_{53}=\widehat{-\alpha-\beta}\uplus\widehat{-\beta}\uplus \{\alpha, \alpha+\delta\}\uplus -(\widehat{\Phi}\backslash (\widehat{-\alpha-\beta}\uplus\widehat{-\beta}\uplus \{\alpha, \alpha+\delta\}))$;

$T_{54}=\widehat{-\alpha-\beta}\uplus\widehat{-\beta}\uplus \{\delta-\alpha, 2\delta-\alpha\}\uplus -(\widehat{\Phi}\backslash (\widehat{-\alpha-\beta}\uplus\widehat{-\beta}\uplus \{\delta-\alpha, 2\delta-\alpha\}))$;

$T_{61}=\widehat{\alpha}\uplus\widehat{-\beta}\uplus \{\alpha+\beta\}\uplus -(\widehat{\Phi}\backslash (\widehat{\alpha}\uplus\widehat{-\beta}\uplus \{\alpha+\beta\}))$;

$T_{62}=\widehat{\alpha}\uplus\widehat{-\beta}\uplus \{\delta-\alpha-\beta\}\uplus -(\widehat{\Phi}\backslash (\widehat{\alpha}\uplus\widehat{-\beta}\uplus \{\delta-\alpha-\beta\}))$;

$T_{63}=\widehat{\alpha}\uplus\widehat{-\beta}\uplus \{\alpha+\beta, \alpha+\beta+\delta\}\uplus -(\widehat{\Phi}\backslash (\widehat{\alpha}\uplus\widehat{-\beta}\uplus \{\alpha+\beta, \alpha+\beta+\delta\}))$;

$T_{64}=\widehat{\alpha}\uplus\widehat{-\beta}\uplus \{\delta-\alpha-\beta, 2\delta-\alpha-\beta\}\uplus -(\widehat{\Phi}\backslash (\widehat{\alpha}\uplus\widehat{-\beta}\uplus \{\delta-\alpha-\beta, 2\delta-\alpha-\beta\}))$;

$U_1=\widehat{\{\alpha,\beta,\alpha+\beta\}}\uplus -(\widehat{\Phi}\backslash \widehat{\{\alpha,\beta,\alpha+\beta\}})$;

$U_2=\widehat{\{-\alpha,\beta,\alpha+\beta\}}\uplus -(\widehat{\Phi}\backslash \widehat{\{-\alpha,\beta,\alpha+\beta\}})$;

$U_3=\widehat{\{-\alpha,\beta,-\alpha-\beta\}}\uplus -(\widehat{\Phi}\backslash \widehat{\{-\alpha,\beta,-\alpha-\beta\}})$;

$U_4=\widehat{\{-\alpha,-\beta,-\alpha-\beta\}}\uplus -(\widehat{\Phi}\backslash \widehat{\{-\alpha,-\beta,-\alpha-\beta\}})$;

$U_5=\widehat{\{\alpha,-\beta,-\alpha-\beta\}}\uplus -(\widehat{\Phi}\backslash \widehat{\{\alpha,-\beta,-\alpha-\beta\}})$;

$U_6=\widehat{\{\alpha,-\beta,\alpha+\beta\}}\uplus -(\widehat{\Phi}\backslash \widehat{\{\alpha,-\beta,\alpha+\beta\}})$;

\normalsize

\section{Acknowledgement}

The author acknowledges the support from Guangdong  Natural Science Foundation  Project 2018A030313581. A small part of Section \ref{omsec} is based on a chapter of the author's dissertation. The author thanks Matthew Dyer for suggesting the problems in Section \ref{localfinite} and \ref{fixlength} and useful discussions.

\end{document}